\numberwithin{equation}{section}% makes equat numb contain the section
\newtheorem{Thm}[equation]{Theorem}
\newtheorem*{Thm*}{Theorem}
\newtheorem{Prop}[equation]{Proposition}
\newtheorem{Lem}[equation]{Lemma}
\newtheorem{Cor}[equation]{Corollary}
\theoremstyle{remark}
\newtheorem{Def}[equation]{Definition}
\newtheorem*{Def*}{Definition}
\newtheorem{Not}[equation]{Notation}
\newtheorem*{Not*}{Notation}
\newtheorem{Exa}[equation]{Example}
\newtheorem{Rem}[equation]{Remark}
\newtheorem{Que}[equation]{Question}
\newcommand{\nc}{\newcommand}
\nc{\dmo}{\DeclareMathOperator}
\nc{\Displ}{\displaystyle}
\nc{\bbA}{\mathbb{A}}
\nc{\bbB}{\mathbb{B}}
\nc{\bbG}{\mathbb{G}}
\nc{\bbN}{\mathbb{N}}
\nc{\bbQ}{\mathbb{Q}}
\nc{\bbZ}{{\mathbb{Z}}}
\nc{\gp}{\mathfrak{p}}% prime p
\nc{\eg}{{e.g.}}
\nc{\ie}{{i.e.}\ }
\nc{\aka}{{a.\,k.\,a.}\ }
\dmo{\Ab}{Ab}% cat of abelian groups
\dmo{\Add}{Add}% additive categories
\dmo{\Aut}{Aut}%
\dmo{\Cat}{Cat}% categories
\dmo{\chara}{char}% characteristic
\dmo{\CoInd}{CoInd}% coinduction
\dmo{\Coker}{Coker}
\dmo{\Comod}{Comod}% comodules
\dmo{\cone}{cone}
\dmo{\Der}{D}% ground notation for derived categories
\dmo{\Desc}{Desc}% descent category
\dmo{\End}{End}
\dmo{\Free}{Free}
\dmo{\Hom}{Hom}
\dmo{\id}{id}
\dmo{\Id}{Id}
\dmo{\im}{im}
\dmo{\Img}{Im}
\dmo{\Ind}{Ind}% induction
\dmo{\Ker}{Ker}
\dmo{\Komp}{K}% ground notation for htpy categories
\dmo{\Mod}{Mod}% modules
\dmo{\Mor}{Mor}%
\dmo{\modname}{mod}%
\dmo{\Or}{Or}
\dmo{\Pic}{Pic}
\dmo{\pr}{pr}
\dmo{\Proj}{Proj}
\dmo{\Qcoh}{Qcoh}% quasi-coherent modules
\dmo{\rmB}{B}
\dmo{\rmH}{H}
\dmo{\Rep}{Rep}
\dmo{\Res}{Res}
\dmo{\rmL}{L}
\dmo{\rmR}{R}
\dmo{\setsname}{sets}
\dmo{\Sets}{Sets}
\dmo{\smallb}{b}% ground exponent for ``bounded''
\dmo{\smallperf}{perf}% ground exponent for ``perfect''
\dmo{\stab}{stab}% stable category of fin. gen. mod.
\dmo{\supp}{supp}
\dmo{\SH}{SH}% ground name for cat of spectra
\dmo{\Spc}{Spc}
\dmo{\Spec}{Spec}
\dmo{\St}{St}% stable category of used with \Rep
\dmo{\Stab}{Stab}% stable category of non-fin. gen. mod.
\dmo{\Vect}{Vect}% category of vector spaces
\nc{\adh}[1]{\overline{#1}}% adherence
\nc{\adj}{\leftrightarrows}
\nc{\begen}{\begin{enumerate}}
\nc{\bfA}{\mathbf{A}}
\nc{\bsl}{\backslash}
\nc{\cat}[1]{\mathscr{#1}}%or: \nc{\cat}[1]{\mathcal{#1}}
\nc{\Cech}{\v Cech}
\nc{\cg}[1]{\,^{#1}}% (left) conjugation
\nc{\cH}{\textrm{\rm\v H}{}}
\nc{\ciso}[1]{\beta_{#1}}% conjugation iso on ring objects
\nc{\CComod}{\,\text{-}\Comod}%
\nc{\con}[2]{{{\vphantom{#2}}^{#1}\!#2}}% cong of subgroup
\nc{\cRes}[1]{\con{#1}\Res{}}
\nc{\Db}{\Der^{\smallb}}% derived bounded category
\nc{\DAC}{\Desc_{\cat C}(A)}% most used
\nc{\DACG}{\Desc_{\cat C(G)}(A)}% most used
\nc{\doublequot}[3]{#1\backslash #2/#3}% double cosets
\nc{\Dperf}{\Der^{\smallperf}}% derived category of perfect compl
\nc{\dto}{\kern-.2em\downarrow}
\nc{\Endcat}[1]{\End_{\cat #1}}
\nc{\EndK}{\Endcat{K}}%most used
\nc{\ened}{\end{enumerate}}
\nc{\eps}{\epsilon}
\nc{\equalby}[1]{\overset{\textrm{#1}}=}
\nc{\equalbyeq}[1]{\equalby{\eqref{#1}}}
\nc{\FFreecat}[1]{\,\textrm{-}\Free_{\cat #1}}
\nc{\gH}{\con{g\!}{H}}% most used
\nc{\gK}{\con{g\!}{K}}% most used
\nc{\gL}{\con{g}{L}}% most used
\nc{\Gm}{\bbG_{\textrm{\rm m}}}
\nc{\gRes}{\cRes{g}}%most used
\nc{\Gsets}{{G\textrm{-}\sets}}
\nc{\gXfunc}[1]{\con{#1}{(-)}}
\nc{\gX}[1]{\con{g}{#1}}
\nc{\HGH}{\doublequot{H}{G}{H}}
\nc{\HGL}{\doublequot HGL}% most used
\nc{\hook}{\hookrightarrow}
\nc{\Homcat}[1]{\Hom_{\cat #1}}
\nc{\HomK}{\Homcat{K}}
\nc{\ic}[1]{{#1}^{\natural}}% idemp compl of #1
\nc{\iccat}[1]{\ic{\cat #1}}% idemp compl of cat #1
\nc{\Idcat}[1]{\Id_{\cat #1}}% identity functor of cat #1
\nc{\Index}[2]{[\,#1\!:\!#2\,]}
\nc{\ihom}{{\underline{\hom}}}
\nc{\into}{\mathop{\rightarrowtail}}
\nc{\inv}{^{-1}}
\nc{\isoto}{\oto{\sim}}%{\buildrel \sim\over\to}
\nc{\isotoo}{\mathop{\otoo{\sim}}}
\nc{\Kb}{\Komp^{\smallb}}% htpy bounded category
\nc{\kk}{\Bbbk}
\nc{\LF}{\rmL\!F}
\nc{\mapstoo}{\longmapsto} \nc{\onto}{\mathop{\twoheadrightarrow}}
\nc{\matrice}[1]{\begin{pmatrix} #1 \end{pmatrix}}
\nc{\Mid}{\,\big|\,}
\nc{\mmod}{\,\text{--}\modname}%
\nc{\MMod}{\,\textrm{-}\Mod}
\nc{\MModcat}[1]{\MMod_{\cat #1}}
\nc{\mutmut}{{\sl mutatis mutandis}}
\nc{\op}{{^{\operatorname{op}}}}
\nc{\oto}[1]{\,\overset{#1}\to\,}
\nc{\otoo}[1]{\,\overset{#1}\too\,}
\nc{\ourfrac}[2]{\genfrac{}{}{0pt}{}{\scriptstyle #1}{\scriptstyle #2}}
\nc{\potimes}[1]{^{\otimes #1}}% tensor power
\nc{\pp}[1]{^{(#1)}}
\nc{\PProj}{\,\textrm{-}\Proj}
\nc{\prj}[2]{\pi_{#1,#2}}
\nc{\prc}[3]{\pi_{#1,#2,#3}}
\nc{\prHL}{\prj{H}{L}}% most used
\nc{\prHLg}{\prc{H}{L}{g}}% most used
\nc{\qquadtext}[1]{\qquad\text{#1}\qquad}
\nc{\quadtext}[1]{\quad\text{#1}\quad}
\nc{\QQcoh}[1]{\Qcohname(#1)}
\nc{\RG}{\rmR\!G}
\nc{\rh}[2]{\alpha_{#1,#2}}% ring homom from A_#2 to A_#1
\nc{\rhLH}{\rh{L}{H}}% most used
\nc{\sbull}{{\scriptscriptstyle\bullet}}%\mathbf{\cdot}}%{}}
\nc{\sbulll}{{\scriptscriptstyle\bullet\bullet}}%\mathbf{\cdot}}%{}}
\nc{\sets}{{\setsname}}%{\textsc{sets}}%
\nc{\sK}{\con{s}K}% most used
\nc{\smallcon}[2]{{{}^{#1}\!#2}}
\nc{\smalliff}{\Leftrightarrow}
\nc{\smallmatrice}[1]{\left(\begin{smallmatrix} #1 \end{smallmatrix}\right)}
\nc{\sRes}{\cRes{s}{}}% most used
\nc{\ssi}{\Leftrightarrow}
\nc{\sstab}{\,\textrm{-}\stab}%
\nc{\SStab}{\,\textrm{-}\Stab}%
\nc{\SET}[2]{\big\{\,#1\Mid#2\,\big\}}
\nc{\SHfin}{\SH^{\text{\rm fin}}}% stab. hom. cat of finite spectra
\nc{\siff}{\Leftrightarrow}% small iff
\nc{\StPic}{\Pic^{\scriptscriptstyle\textrm{\,\rm st}}}
\nc{\StRep}{\Stab\Rep}
\nc{\tK}{\con{t}{K}}% most used
\nc{\tL}{\con{t}{L}}% most used
\nc{\too}{\mathop{\longrightarrow}\limits}
\nc{\tooo}[1]{\mathop{\vcenter{\hbox to #1em{\hrulefill}}\kern-5pt\to}\limits}
\nc{\tRes}{\cRes{t}}% most used
\nc{\uA}{\underline{A}}
\nc{\uK}{\con{u}{K}}% most used
\nc{\unit}{\mathbb{1}}%{1\!\!1}}% unit for \otimes
\nc{\cC}{\check C}
\nc{\VVect}{\,\textrm{-}\Vect}
\nc{\xyptriangle}[7]{\xymatrix@C=#7em{{#1} \ar[r]^-{\Displ #4} & {#2} \ar[r]^-{\Displ #5}&{#3}\ar[r]^-{\Displ #6}&\Sigma #1\,.}}
\nc{\xystriangle}[7]{\xymatrix@C=#7em{{#1}\ar[r]^-{#4} & {#2} \ar[r]^-{#5}&{#3}\ar[r]^-{#6}&\Sigma {#1}}}% small version of the above
\nc{\xysTriangle}[8]{\xymatrix@C=#8em{{#1}\ar[r]^-{#5}&{#2}\ar[r]^-{#6}&{#3}\ar[r]^-{#7}&{#4}}}% small version of the above
\nc{\xytriangle}[7]{\xymatrix@C=#7em{{#1} \ar[r]^-{\Displ #4} & {#2} \ar[r]^-{\Displ #5}&{#3}\ar[r]^-{\Displ #6}&\Sigma #1}}
\nc{\xyTriangle}[8] {\xymatrix@C=#8em{#1\ar[r]^-{\Displ #5}&#2\ar[r]^-{\Displ #6}&#3\ar[r]^-{\Displ #7}&#4}}
\begin{document}

%------------------------------------------------------------------------------

\title[Stacks of group representations]{Stacks of group representations}
\author{Paul Balmer}
\date{2012 August 1}

\address{Paul Balmer, Mathematics Department, UCLA, Los Angeles, CA 90095-1555, USA}
\email{balmer@math.ucla.edu}
\urladdr{http://www.math.ucla.edu/$\sim$balmer}

\begin{abstract}
We start with a small paradigm shift about group representations, namely the observation that restriction to a subgroup can be understood as an extension-of-scalars. We deduce that, given a group~$G$, the derived and the stable categories of representations of a subgroup $H$ can be constructed out of the corresponding category for~$G$ by a purely triangulated-categorical construction, analogous to \'etale extension in algebraic geometry.

In the case of finite groups, we then use descent methods to investigate when modular representations of the subgroup $H$ can be extended to~$G$. We show that the presheaves of plain, derived and stable representations all form stacks on the category of finite $G$-sets (or the orbit category of~$G$), with respect to a suitable Grothendieck topology that we call the \emph{sipp topology}.

When $H$ contains a Sylow subgroup of~$G$, we use sipp \Cech\ cohomology to describe the kernel and the image of the homomorphism $T(G)\to T(H)$, where $T(-)$ denotes the group of endotrivial representations.
\end{abstract}

\subjclass[2010]{20C20, 14A20, 18F10, 18E30, 18C20} \keywords{restriction, extension, monad, stack, modular representations, finite group, ring object, descent, endotrivial representation}

\thanks{Research supported by NSF grant~DMS-0969644.}

\maketitle

\vskip-\baselineskip\vskip-\baselineskip
\tableofcontents

%------------------------------------------------------------------------------
\goodbreak
\section{Introduction}
\bigbreak
%------------------------------------------------------------------------------

For the whole paper, $G$ is a group, $\kk$ a commutative ring and $p$ a prime number.
\begin{Not}
\label{not:123}%
We denote by $\cat C(G)$ either one of the following categories\,:
\begin{enumerate}[(1)]
\item\label{it:1}%
\quad $\cat C(G)= \kk G\MMod$ \quad the category of left $\kk G$-modules, \underline{or}
\smallbreak
\item\label{it:2}%
\quad $\cat C(G)= \Der(\kk G)$ \kern 2.2em its derived category, assuming $\kk$ a field, \underline{or}
\smallbreak
\item\label{it:3}%
\quad $\cat C(G)= \kk G\SStab$ \quad its stable category, assuming $\kk$ a field and $G$ finite.
\end{enumerate}
\end{Not}

\goodbreak

Let $H\leq G$ be a subgroup. Our initial observation is that, in all three cases, \emph{restriction $\Res^G_H:\cat C(G)\to \cat C(H)$ is an extension-of-scalars\,!} This slogan seems ludicrous at first sight but makes sense if we understand ``extension-of-scalars" in the appropriate way. In this vein, Theorems~\ref{thm:fin-ind} and~\ref{thm:f-i-der} give us\,:

\begin{Thm}
\label{thm:main}%
Suppose that $H\leq G$ is a subgroup of finite index. Then there exists a commutative separable ring object $A=A^G_H$ in the symmetric monoidal category~$\cat C(G)$ and a canonical equivalence $\Psi=\Psi^G_H: \ \cat C(H)\isotoo A\MModcat{C(G)}$ between the category $\cat C(H)$ and the category $A\MModcat{C(G)}$ of $A$-modules \underbar{in~$\cat C(G)$}, under which the restriction functor $\Res^G_H$ becomes isomorphic to the extension-of-scalars functor~$F_A=A\otimes-$ with respect to~$A$, \ie the following diagram
$$
\xymatrix@C=1em@R=2em{
& \cat C(G) \ar[ld]_(.6){\Displ \Res^G_H} \ar[rd]^(.6){\ \ \Displ F_{A}}
\\
\cat C(H) \ar[rr]_-{\cong}^-{\Displ \Psi}
&& A\MModcat{C(G)}
}
$$
commutes up to isomorphism. Under this equivalence, (co)induction $\cat C(H)\too \cat C(G)$ becomes isomorphic to the functor $A\MModcat{C(G)}\too\cat C(G)$ which forgets $A$-actions. Explicitly, the ring object $A^G_H$ is the usual $\kk G$-module $\kk(G/H)$ with multiplication given by $\gamma\cdot\gamma=\gamma$ and $\gamma\cdot\gamma'=0$ for every $\gamma\neq\gamma'$ in~$G/H$ (see Definition~\ref{def:AH}).
\end{Thm}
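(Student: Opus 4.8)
The plan is to prove the statement first in the module case $\cat C(G)=\kk G\MMod$ by an explicit computation, and then to transport it to the derived and stable cases by a separability argument. I would begin with the ring object. Recall that $A=\kk(G/H)$ is the permutation $\kk G$-module on the finite set $G/H$; the pointwise multiplication $\gamma\cdot\gamma=\gamma$ and $\gamma\cdot\gamma'=0$ for $\gamma\neq\gamma'$, with unit $\sum_{\gamma\in G/H}\gamma$, makes it a commutative, associative and unital ring object in $(\kk G\MMod,\otimes_{\kk})$. The only nonformal point at this stage is separability, which I would settle by exhibiting the section $\sigma\colon A\to A\otimes A$, $\sigma(\gamma)=\gamma\otimes\gamma$, of the multiplication $\mu\colon A\otimes A\to A$: it is visibly $G$-equivariant and $A$-linear, so $A$ is separable.

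Next I would construct the equivalence in the module case. The key observation is that an $A$-module structure on a $\kk G$-module $M$ amounts exactly to a $G$-equivariant grading $M=\bigoplus_{\gamma\in G/H}M_\gamma$, where $M_\gamma$ is the image of the idempotent $\gamma$ and the $A$-module axiom forces $g\,M_\gamma=M_{g\gamma}$. Because $G$ acts transitively on $G/H$ with $\Stab_G(e)=H$ at the trivial coset $e=H$, such a grading is determined by the single summand $M_e$, which is a $\kk H$-module, all other summands being recovered by transport of structure. This suggests defining $\Phi\colon A\MModcat{C(G)}\to\cat C(H)$ by $\Phi(M)=e\cdot M=M_e$ and taking $\Psi=\Ind^G_H(-)$ with its evident coset grading; checking that $\Phi$ and $\Psi$ are mutually quasi-inverse is then a direct unwinding of these definitions.

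The compatibility with restriction is cleanest to verify on the $\Phi$-side: for $M\in\cat C(G)$ the free $A$-module $F_A(M)=A\otimes M$ satisfies $\Phi(F_A(M))=e\cdot(A\otimes M)=\kk e\otimes M$, which is $\Res^G_H M$ as a $\kk H$-module (here $h\in H$ fixes $e$, so $h(e\otimes m)=e\otimes hm$). Applying $\Psi$ and using $\Psi\circ\Phi\cong\Id$ yields $\Psi\circ\Res^G_H\cong F_A$. Similarly the forgetful functor $U$ satisfies $U\circ\Psi\cong\Ind^G_H$; and since $[G:H]<\infty$ gives $\Ind^G_H\cong\CoInd^G_H$, (co)induction does become the forgetful functor, completing the module case.

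Finally I would transport the result to the derived and stable cases, which is where the real obstacle lies. The ring object is the same $A=\kk(G/H)$, now regarded in $\Der(\kk G)$ or in $\kk G\SStab$, and the formal compatibilities with $\Res^G_H$ and (co)induction will follow as soon as the equivalence $\cat C(H)\simeq A\MMod_{\cat C(G)}$ is in hand. The crux is that forming $A$-modules must \emph{commute} with passage to the derived and the stable category: one needs $A\MMod_{\Der(\kk G)}\simeq\Der\bigl(A\MMod_{\kk G\MMod}\bigr)$ and, likewise, $A\MMod_{\kk G\SStab}\simeq\bigl(A\MMod_{\kk G\MMod}\bigr)\SStab$. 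For an arbitrary ring object such comparisons fail, and it is exactly the separability of $A$ secured above—through the theory of separable ring objects in triangulated categories—that forces them to be equivalences. Granting this, the module-case equivalence $A\MMod_{\kk G\MMod}\simeq\kk H\MMod$ yields $A\MMod_{\Der(\kk G)}\simeq\Der(\kk H)$ and $A\MMod_{\kk G\SStab}\simeq\kk H\SStab$, and the identifications of $\Res^G_H$ with $F_A$ and of (co)induction with the forgetful functor carry over from the module case, the relevant functors being exact and defined uniformly across all three settings.
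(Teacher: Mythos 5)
Your handling of the plain module case $\cat C(G)=\kk G\MMod$ is correct, and it is a genuinely different route from the paper's: you decompose an $A$-module in $\kk G\MMod$ into the $G$-equivariant grading $M=\bigoplus_{\gamma\in G/H}M_\gamma$ cut out by the orthogonal idempotents of $A=\kk(G/H)$ and recover the $\kk H$-module as the summand over the trivial coset, whereas the paper never decomposes anything by hand: it proves monadicity of the restriction--coinduction adjunction abstractly (Theorem~\ref{thm:paradigm}, via the bimodule retraction $m:\kk G\to \kk H$ and Lemma~\ref{lem:monadicity}) and then identifies the monad $\Ind\Res$ with $A\otimes-$ (Proposition~\ref{prop:AH}). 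Your $\Psi=\Ind_H^G(-)$ with its coset grading agrees with the explicit functor in Theorem~\ref{thm:fin-ind}, so for case~(1) the two arguments differ only in method and yours is perfectly serviceable.

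The gap is in your last step, which is exactly where the content of the theorem lies in cases~(2) and~(3). You need the comparisons $A\MMod_{\Der(\kk G)}\simeq\Der\bigl(A\MMod_{\kk G\MMod}\bigr)$ and $A\MMod_{\kk G\SStab}\simeq\bigl(A\MMod_{\kk G\MMod}\bigr)\SStab$, and you assert that separability forces them ``through the theory of separable ring objects in triangulated categories.'' That theory (\cite{Balmer11}) gives something else: it endows $A\MModcat{C(G)}$ with a triangulation when $\cat C(G)$ is triangulated and $A$ is separable. It does \emph{not} give the equivalence between ``modules in the derived (or stable) category'' and ``the derived (or stable) category of modules''; given your module-case result, that equivalence is literally Theorem~\ref{thm:f-i-der}, so quoting it is circular --- the natural reference for such a statement is this very paper. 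Nor is it a formal consequence of separability in the abstract: Remark~\ref{rem:warn} exhibits an adjunction whose abelian-level counit is split (even invertible) but whose derived adjunction fails to be monadic, so some argument specific to the situation at hand is unavoidable. What actually closes the gap is the observation that $\Res$, $\Ind$, $\CoInd$ (equivalently $F_A$ and $U_A$) are all exact, so the adjunction and the natural, $A$-linear splitting of its counit coming from separability pass to complexes and to stable categories \emph{degreewise}, with nothing left-derived (Lemma~\ref{lem:der}); one then applies the monadicity Lemma~\ref{lem:monadicity} together with idempotent-completeness of $\Der(\kk H)$ and $\kk H\SStab$. In short, the commutation you want to cite is true, but its proof is precisely the split-counit monadicity argument that constitutes the paper's proof of Theorems~\ref{thm:para-der} and~\ref{thm:f-i-der}: you must run that argument, not invoke it as a black box.
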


This result relies in an essential way on the use of $A$-modules \emph{in the category~$\cat C(G)$}, \`a la Eilenberg-Moore~\cite{EilenbergMoore65}; see Section~\ref{se:monadicity}. This half-a-century old concept of modules in a category is the obvious generalization of ordinary modules in the category of abelian groups and we expect most readers to feel comfortable with it.

Instead of an Alpine hypothyroid proof, we present in Section~\ref{se:monadicity} a more urbane approach, which also leads to nice generalizations. For instance, Theorems~\ref{thm:paradigm} and~\ref{thm:para-der} give us the very same statement for arbitrary subgroups $H\leq G$, of possibly infinite index, at the cost of replacing the ring object~$A$ in $\cat C(G)$ by a ``ring functor" $\bfA\,:\cat C(G)\to \cat C(G)$, better known as a \emph{monad}. A similar theorem holds for a so-called ``cyclic shifted subgroup" of an elementary abelian group; see Theorem~\ref{thm:scs}.

If the reader prefers category-theory language, these theorems actually establish \emph{monadicity} of various restriction-coinduction adjunctions. See Remark~\ref{rem:Beck}.

\smallbreak

Beyond its counter-intuitive simplicity, Theorem~\ref{thm:main} is particularly remarkable in cases~\eqref{it:2} and~\eqref{it:3}, for derived and stable categories, because we really mean here ``modules in the homotopy category" and not ``homotopy category of modules"! In other words, these triangulated categories $\cat C(H)$ can be obtained via a purely triangulated-categorical construction applied to~$\cat C(G)$; see~\cite{Balmer11}. To put things in perspective, let us draw an analogy with algebraic geometry.

For a noetherian scheme~$X$ (say, a variety), the functor on derived categories $\Der(X)\to \Der(U)$ induced by restriction to an open subscheme $U\subset X$ is a categorical localization. However, when $\cat C(G)$ is the derived or the stable category of a finite group~$G$, no localization of~$\cat C(G)$ comes anywhere close to~$\cat C(H)$, in general. The point we make here is that this passage from $G$ to $H$ is obtained via separable monads. (Note that localizations are very special monads.) In algebraic geometry, allowing separable monads instead of just localizations is basically the same thing as allowing \'etale covers instead of just Zariski covers. Hence, transposing \'etale extensions to representation theory is much richer than transposing only localizations. In fact, it is an open question whether there is more ``\'etale topology" in modular representation theory beyond restriction to subgroups. See Remark~\ref{rem:Conj}.

This being said, the main motivation for Theorem~\ref{thm:main} is the change of paradigm that it suggests. Indeed, since $\cat C(H)$ turns out to be the category of $A$-modules in $\cat C(G)$, the problem of \emph{extending} representations from~$H$ to~$G$ now becomes a \emph{descent} problem in $\cat C(G)$ with respect to the ring~$A=A^G_H$. In algebraic geometry, descent has been systematically studied by Grothendieck and the Diadochi and applies to many frameworks, including monads; see Mesablishvili~\cite{Mesablishvili06a}. Descent is pretty well-behaved for triangulated categories too, as explained in~\cite{Balmer12}, which allows us to discuss descent in derived and stable categories. The critical condition for descent to hold is that $A^G_H$ should be faithful, which amounts to the index $\Index{G}{H}$ being invertible in~$\kk$. See Remark~\ref{rem:desc}.

One could then try to express descent with respect to~$A$ by means of $A$-modules equipped with gluing isomorphisms in $A\potimes2$-modules satisfying cocycle conditions in $A\potimes 3$-modules. We explain in the same Remark~\ref{rem:desc} that this strategy collapses in an embroglio of Mackey formulas and an overdose of non-natural choices. To master these technicalities, it is convenient to replace subgroups of~$G$ by $G$-orbits. This leads us in Part~\ref{part:B} to a Grothendieck topology and to stacks, as we now explain.

\smallbreak

For simplicity, we assume for the rest of this introduction that $G$ is finite and that $\kk$ is a field of characteristic~$p$. Transposing~\ref{not:123} to $G$-sets, we get\,:
\begin{Not}
\label{not:123'}%
For every finite $G$-set~$X$, we write $\cat D(X)$ for the following category
\begin{enumerate}[(1')]
\item\label{it:1'}%
\quad $\cat D(X)= \Rep(X)$ \kern3.1em the category of representations of $X$, in case~\eqref{it:1},
\item\label{it:2'}%
\quad $\cat D(X)= \Der(\Rep(X))$ \kern 1.6em its derived category, in case~\eqref{it:2},
\item\label{it:3'}%
\quad $\cat D(X)= \StRep(X)$ \quad its stable category, in case~\eqref{it:3}.
\end{enumerate}
The category $\Rep(X)=(\kk\VVect)^{G\ltimes X}$ is defined via the action groupoid $G\ltimes X$.
\end{Not}

This standard material is recalled in the short Section~\ref{se:rep} for the reader's convenience. Among these categories $\cat D(X)$, we find our original categories $\cat C(H)$ for~$H\leq G$ as in~\ref{not:123}, simply by considering orbits. Indeed: $\cat C(H)\cong \cat D(G/H)$. This idea roots back to Dress~\cite{Dress73}. Since $G$-maps from $G/H_1$ to~$G/H_2$ are given by elements of $G$ which normalize $H_1$ into~$H_2$, these functors $\cat D(-)$ allow us to treat simultaneously conjugation and restriction to subgroups. Hence $\cat D(-)$ might be apprehended as a categorification of ordinary Mackey functors, see Webb~\cite{Webb00}. In other words, $\cat D(-)$ is a presheaf of categories on the category of finite $G$-sets. Descent will tell us something more, namely that $\cat D(-)$ is in fact a \emph{sheaf} in the appropriate sense.

As in algebraic geometry, we use the notion of \emph{stack} to formalize the above heuristical ``sheaf of categories"; see Section~\ref{se:descent}. The central Theorem~\ref{thm:stack} tells us that these presheaves of representations $\cat D(-)$ define stacks on the category of finite $G$-sets with respect to a suitable Grothendieck topology, called the \emph{sipp topology}. By the above discussion, we expect a subgroup $H\leq G$ to ``cover'' $G$ if its index $\Index{G}{H}$ is prime to~$p=\chara(\kk)$. Translated in terms of the associated $G$-map on orbits $G/H\onto G/G$, we want \underbar{s}tabilizers to have \underbar{i}ndex \underbar{p}rime to~\underbar{$p$}, hence the name \emph{sipp} topology. For clarity, we describe this topology on $\Gsets$ in Section~\ref{se:top}, at the start of Part~\ref{part:B}, before even speaking of $G$-set representations. Alternatively, we could restrict the sipp topology to the orbit category $\Or(G)$ and the theory would go through. It is more convenient to work with the whole category of $G$-sets because its has pull-backs, whereas $\Or(G)$ does not, but this choice is mostly cosmetic.

\smallbreak

Turning to applications in Part~\ref{part:C}, we want to use descent to extend modular representations from a subgroup $H$ to the group~$G$ when $\Index GH$ is prime to~$p$. In other words, we want to apply the methods of Part~\ref{part:B} to the stable categories of~\eqref{it:3}~\&~(\ref{it:3'}'). Once we understand $U:=G/H$ as a sipp-cover of $X:=G/G$, the descent property  involves gluing isomorphisms on the ``intersection" $U\times_XU$ and cocyle conditions on the ``double intersection" $U\times_XU\times_XU$. If we try to translate this in terms of subgroups, we bump into Mackey formulas again. So where was the gain\,? The answer is a standard (Grothendieckian) trick\,: First, accept \emph{all} choices and then deal with the excess of information. The first step of this strategy is best implemented with representations of $G$-sets and leads us to the hybrid Theorem~\ref{thm:tame} which still involves $\StRep(-)$ but is free of any Mackey-formulaic choices. The next step, in Section~\ref{se:extend}, is to restore usual stable categories $\kk ?\SStab$ of subgroups instead of all the $\StRep(G/?)$ in sight. This turns Theorem~\ref{thm:tame} into the following plug-and-play result (Theorem~\ref{thm:stack0}), which can be used without any knowledge of stacks and Grothendieck topology\,:
\begin{Thm}
\label{thm:pub}%
Let $H\leq G$ be a subgroup of index prime to~$p$. Let $W$ be a $\kk H$-module. For every $g\in G$, let $\sigma_g:W\dto_{H[g]}\,\isotoo \con{g\,}W\dto_{H[g]}$ be an isomorphism in the stable category $\kk H[g]\SStab$, where $H[g]$ stands for $H^g\cap H$ and where $\con{g\,}W\dto_{H[g]}$ is $g$-twisted restriction (see Notation~\ref{not:gRes} if needed), with the following hypotheses\,:
\begen[(I)]
\item
If $h\in H$ (so $H[h]=H$), assume that the given isomorphism $\sigma_h$ and the canonical isomorphism $h\cdot:W\otoo{\cong} \con{h\,}W$, $w\mapsto h\,w$, are equal in $\kk H\SStab$.
\smallbreak
\item
For every $g_1,g_2\in G$, consider the subgroup ${H[g_2,g_1]}:=H^{g_{2}g_{1}}\cap H^{g_1}\cap H$ and assume that the following diagram commutes in $\kk {H[g_2,g_1]}\SStab$\,:
$$
\xymatrix@C=2em@R=1.5em{
& W\dto_{H[g_2,g_1]}
 \ar[ld]_-{\Displ\,\sigma_{g_1}}
 \ar[rd]^-{\Displ\,\sigma_{g_2g_1}}
\\
\con{g_1} W\dto_{H[g_2,g_1]}\
 \ar[rr]^-{\Displ\,\sigma_{g_2}}
&& \ \con{g_2g_1} W\dto_{H[g_2,g_1]}\,. \kern-3em
}
$$
\ened
Then $W$ extends to~$G$, \ie there is a $\kk G$-module $V$ and an isomorphism $f: V\dto_H \isoto W$ in $\kk H\SStab$ such that for every $g\in G$ the following commutes in $\kk H[g]\SStab$\,:
$$
\xymatrix@C=4em@R=1.8em{
V \dto_{H[g]} \ar[r]^-{f}_-{\simeq} \ar[d]_-{\Displ g\cdot}^-{\cong}
& W \dto_{H[g]} \ar[d]^-{\Displ\sigma_g}_(.45){\simeq}
\\
\con{g\,}V \dto_{H[g]} \ar[r]^-{f}_-{\simeq}
& \con{g\,}W \dto_{H[g]}\,.\kern-1em
}
$$
Moreover, the pair $(V,f)$ is unique up to unique isomorphism, in the obvious sense.
\end{Thm}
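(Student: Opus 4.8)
The plan is to deduce the statement from the descent property of the stack $\StRep(-)$ (Theorem~\ref{thm:tame}) by translating the abstract descent data along a single sipp-cover into the representation-theoretic language of twisted restrictions. Set $X = G/G$ and $U = G/H$. Since $\Index GH$ is prime to $p = \chara(\kk)$, the structure map $\pi\colon U \onto X$ is a sipp-covering of the terminal $G$-set~$X$, so the stack property applies along $\pi$: the comparison functor from $\cat D(X) = \kk G\SStab$ to the category of descent data for $\StRep(-)$ along $\pi$ is an equivalence. The whole content of the theorem is then to recognize the hypotheses $(W,\{\sigma_g\})$, (I) and (II) as exactly such a descent datum, and to read off the conclusion from the inverse equivalence.

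First I would record the orbit decompositions of the iterated fibre products over the point~$X$, which are honest products: $U \times_X U \cong \coprod_{[g]} G/H[g]$ indexed by the double cosets $[g] \in \HGH$, with the stabilizer of a chosen base point in the $[g]$-component equal to $H[g] = H^g \cap H$; and likewise $U\times_X U\times_X U \cong \coprod_{[g_1],[g_2]} G/H[g_2,g_1]$ with stabilizers $H[g_2,g_1] = H^{g_2g_1}\cap H^{g_1}\cap H$. Under the equivalences $\StRep(\coprod_i Y_i)\cong\prod_i\StRep(Y_i)$ together with $\cat D(G/K)\cong\cat C(K)$, i.e.\ $\StRep(G/K)\cong\kk K\SStab$, an object of $\StRep(U)$ is simply a $\kk H$-module (our $W$), a gluing isomorphism on $U\times_X U$ becomes a family of isomorphisms indexed by $\HGH$, and the cocycle on the triple product becomes a family indexed by pairs of double cosets.

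The key computation is to identify, on the $[g]$-component $G/H[g]\hook U\times_X U$, the two structure maps $\pr_1^*$ and $\pr_2^*$ with the two functors occurring in the statement. Composing the inclusion with the first projection gives, up to the chosen base point, the canonical quotient $G/H[g]\to G/H$, which corresponds to ordinary restriction $(-)\dto_{H[g]}$; the second projection differs by left translation by~$g$ and therefore corresponds to the $g$-twisted restriction $\con{g}{(-)}\dto_{H[g]}$. Hence a gluing isomorphism $\phi\colon\pr_1^*W\isoto\pr_2^*W$ is precisely a family of isomorphisms $\sigma_g\colon W\dto_{H[g]}\isotoo\con{g}{W}\dto_{H[g]}$ in $\kk H[g]\SStab$. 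The diagonal component is the double coset of~$e$ (where $H[e]=H$), and the normalization $\Delta^*\phi=\id$ there, made $G$-equivariant across representatives, unwinds to hypothesis~(I); the cocycle identity $\pr_{23}^*\phi\circ\pr_{12}^*\phi=\pr_{13}^*\phi$ on each $G/H[g_2,g_1]$-component unwinds to the commuting triangle of hypothesis~(II). With this dictionary, $(W,\{\sigma_g\})$ satisfying (I)--(II) is exactly a descent datum; applying the inverse equivalence from Theorem~\ref{thm:tame} yields a $\kk G$-module~$V$ together with its canonical comparison isomorphism, whose restriction to~$U$ is the desired $f\colon V\dto_H\isoto W$, and the compatibility square in the conclusion is just the defining property of that comparison isomorphism read through the same dictionary.

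Uniqueness up to unique isomorphism is then formal: the comparison functor is an equivalence, hence fully faithful, so any two solutions $(V,f)$ and $(V',f')$ determine isomorphic descent data and thus a unique isomorphism $V\isoto V'$ compatible with $f$ and $f'$. I expect the only real difficulty to be the bookkeeping of the key computation --- checking that the projections restrict to ordinary and to $g$-twisted restriction with the correct normalizations, and that the abstract cocycle matches hypothesis~(II) on the nose, including the composition order and the subgroups $H[g_2,g_1]$. This is exactly the Mackey-formula tangle that the stack formalism is designed to absorb, so the obstacle is combinatorial rather than conceptual: one must fix double-coset representatives and base points coherently so that the $g$-twists compose correctly and no non-natural choice survives into the final, choice-free statement.
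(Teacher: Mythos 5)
Your overall plan coincides with the paper's: treat $G/H\onto G/G$ as a sipp-cover, invoke the stack property of $\StRep(-)$, and translate descent data into twisted restrictions. The genuine gap is in your ``key computation'', at the level of the triple fibre product. The claimed decomposition $U\times_XU\times_XU\cong\coprod_{[g_1],[g_2]}G/H[g_2,g_1]$ indexed by \emph{pairs of double cosets} is false: applying Mackey's formula~\eqref{eq:Mackey} twice, the $G$-orbits of $(G/H)^3$ are indexed by the nested set $\coprod_{t}\,\doublequot{(H^t\cap H)}{G}{H}$ (with $t$ running over $\HGH$), which is in general strictly larger than $\HGH\times\HGH$; for $G=S_3$ and $H$ of order~$2$ there are five orbits but only four pairs of double cosets. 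Consequently the cocycle identity does not ``unwind on each $G/H[g_2,g_1]$-component'' as you assert. Worse, even with the correct decomposition, the three projections $\pr_{12},\pr_{13},\pr_{23}$ cannot be normalized simultaneously by one choice of double-coset representatives: if a component of $U\pp3$ is labelled by representatives $(t,s)$, then $\pr_{13}$ carries it to the $U\pp2$-component of the double coset of~$ts$, which need not be a chosen representative. This is exactly the embroglio described in Remark~\ref{rem:desc}, and your concluding proposal to ``fix double-coset representatives and base points coherently'' is the very strategy the paper rejects. The actual proof (Theorem~\ref{thm:tame}) never chooses representatives: it indexes over \emph{all} $g\in G$, resp.\ all pairs $(g_2,g_1)\in G\times G$, via the maps $\gamma_g$ and $\delta_{g_2,g_1}$ of Notation~\ref{not:bcd}, obtaining functors $F\pp2$ and $F\pp3$ in~\eqref{eq:F2} and~\eqref{eq:F3} which are only \emph{faithful}, not equivalences; the resulting redundancy is then tamed by Claim~A (characterizing which families lie in the image of~$F\pp2$) and Claim~B (deducing $s_{hg}=s_g$ and $s_{gh}=\beta_h^*(s_g)$ from your hypotheses (I) and (II)), and the cocycle condition is verified after applying the faithful~$F\pp3$, using Lemma~\ref{lem:012}. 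This also explains why (I) and (II) are hypotheses over all of $G$ and $G\times G$: the excess information is what makes the statement choice-free, and it is needed, not merely tolerated.

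A second, smaller omission: your dictionary treats the equivalence $\StRep(G/K)\simeq\kk K\SStab$ as strictly intertwining $\beta_g^*$ with $g$-twisted restriction, but the equivalences $\iota_K^*$ do so only up to the natural isomorphisms $\omega\pp{g}$ of Lemma~\ref{lem:BH}; transporting hypothesis~(II) through these coherently is the content of the large commutative diagram in the proof of Theorem~\ref{thm:stack0}, not a purely notational matter.
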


To measure the importance of this application, note that it constitutes a substantial generalization of the main result of~\cite{Balmer12app}, where we treated the special case of the trivial representation~$W=\kk$, in order to compute the kernel of the restriction homomorphism $T(G)\to T(P)$, where $P$ is a Sylow $p$-subgroup of~$G$ and where $T(G)$ is the group of endotrivial $\kk G$-modules. The general Theorem~\ref{thm:pub} above gives a criterion to extend arbitrary representations~$W$ and is therefore important beyond endotrivial ones. Interestingly, even for endotrivial representations, it also allows us to improve on~\cite{Balmer12app} and describe the \emph{image} of $T(G)\to T(P)$. The non-specialist will find in~\cite{Balmer12app,Bouc06,CarlsonThevenaz04,CarlsonThevenaz05} further references on the central role played by endotrivial modules in modular representation theory.

Carlson-Th\'evenaz~\cite{CarlsonThevenaz04,CarlsonThevenaz05} classified the groups $T(P)$ for all $p$-groups~$P$. For arbitrary finite groups~$G$, the invariant $T(G)$ is not given by a simple formula and no classification is expected to exist in general. So the problem is to describe as explicitly as possible the kernel and the image of the restriction homomorphism $T(G)\to T(P)$, for $P\leq G$ a Sylow $p$-subgroup, knowing that the actual computation for every given group will remain difficult. Note that the group $T(G)$ is nothing but the Picard group of $\otimes$-invertible objects in the stable category\,: $T(G)=\Pic(\kk G\sstab)$. Contrary to its algebro-geometric counterpart, this representation-theoretic Picard group $T(G)$ is not an $\rmH^1(-,\Gm)$ in any known way. However, although neither $T(G)$ nor $T(P)$ are cohomology groups, we prove here that $\Ker(T(G)\to T(P))$ and $\Img(T(G)\to T(P))$ are related to the first and second \Cech\ cohomology groups of the sipp sheaf of units~$\Gm$, which is just the constant sheaf associated to the abelian group of units~$\kk^\times$. Indeed, if we consider the  sipp-cover~$\cat U:=\{G/P\to G/G\}$, Theorem~\ref{thm:TGP} gives a canonical isomorphism
$$
\Ker\big(T(G)\to T(P)\big)\cong \cH^1(\cat U,\Gm)\,.
$$
This formula recovers and conceptualizes the main result of~\cite{Balmer12app}, which was more down-to-earth. On the other hand, the result about the image is new and reads
$$
\Img\big(T(G)\to T(P)\big)\cong \Ker\big(\cH^0(\cat U,\Pic)\oto{z} \cH^2(\cat U,\Gm)\big)
$$
for an explicit group homomorphism $z:\cH^0(\cat U,\Pic)\to \cH^2(\cat U,\Gm)$, see Theorem~\ref{thm:H2}. These \Cech\ cohomology groups give an ideal solution to the problem of determining $T(G)$ for \emph{all} groups~$G$, because they basically only involve the action of $G$ on its $p$-subgroups (see Definition~\ref{def:Cech}). In particular, they do not involve any representations, nor any stable categories. Although most probably possible, the ``numerical" determination of these groups for specific groups~$G$ is left to more computer-savvy people than the author.

%------------------------------------------------------------------------------
%------------------------------------------------------------------------------
\goodbreak
\part{Restriction via separable extension-of-scalars}
\label{part:A}%
\bigbreak
%------------------------------------------------------------------------------

%------------------------------------------------------------------------------
%\medbreak
\section{Categories of modules and monadicity}
\label{se:monadicity}%
\medbreak
%------------------------------------------------------------------------------

%We briefly recall some basics of category theory, mostly about monads.

%
\begin{Rem}
\label{rem:ic}%
An additive category $\cat C$ is \emph{idempotent-complete} (or karoubian) if every idempotent morphism $e=e^2:X\to X$ in $\cat C$ yields a decomposition $X=\im(e)\oplus \ker(e)$. Any additive category can be idempotent-completed $\cat C\hook\cat C^\natural$ by an elegant well-known construction due to Karoubi. An additive functor $F:\cat C\to \cat D$ is an \emph{equivalence up to direct summands} if the induced functor $F^\natural:\cat C^\natural\to\cat D^\natural$ is an equivalence. This is the same as saying that $F:\cat C\to \cat D$ is fully-faithful and that every object in~$\cat D$ is a direct summand of the image by $F$ of some object of~$\cat C$.
\end{Rem}

We now recall the concept of monad on a category~$\cat C$; see~\cite{MacLane98}. In short, a monad on~$\cat C$ is a monoid in the category of endofunctors.

\begin{Def}
\label{def:monad}%
A \emph{monad} $(\bfA,\mu,\eta)$ on~$\cat C$ is an
endo-functor $\bfA:\cat C\to \cat C$ with a natural transformation $\mu:\bfA^2\to \bfA$, called
the \emph{multiplication}, such that $\mu\circ (\bfA\mu)=\mu\circ (\mu
\bfA):\, \bfA^3\to \bfA$ (associativity) and with a natural transformation
$\eta:\Idcat{C}\to \bfA$, called the \emph{two-sided unit}, such that
$\mu\circ (\bfA\eta)=\mu\circ(\eta \bfA)=\id_\bfA:\, \bfA\to \bfA$.

An \emph{$\bfA$-module in~$\cat C$} is a pair $(X,\varrho)$ where $X$ is an object of~$\cat C$ and $\varrho:\bfA(X)\to X$ is a morphism in~$\cat C$, called the \emph{$\bfA$-action}, such that $\varrho\circ (\bfA\varrho)=\varrho\circ\mu_X:\, \bfA^2(X)\to X$ and $\varrho\circ\eta_X=\id_X:\,X\to X$. These replace the usual $a\cdot (b\cdot x)=(ab)\cdot x$ and $1\cdot x=x$ for ordinary modules. Morphisms of $\bfA$-modules $f:(X,\varrho)\to (X',\varrho')$ are
morphisms $f:X\to X'$ in~$\cat C$ such that $\varrho'\circ \bfA(f)=f\circ\varrho:\bfA(X)\to X'$ ($\bfA$-linearity), replacing $a\cdot f(x)=f(a\cdot x)$. We denote by $\bfA\MModcat{C}$ the
category of $\bfA$-modules in~$\cat C$ and we have the so-called
\emph{Eilenberg-Moore~\cite{EilenbergMoore65} adjunction}
$$
F_\bfA\,:\,\cat {C}\adj \bfA\MModcat{C}\,:\,U_\bfA
$$
where the left adjoint is \emph{extension-of-scalars},
$F_\bfA(Y)=\big(\bfA(Y),\mu_Y\big)$ and $F_\bfA(f)=\bfA(f)$, and the right
adjoint is the forgetful functor $U_\bfA(X,\varrho)=X$ and $U_\bfA(f)=f$. The $\bfA$-module $F_{\bfA}(Y)$ is also called the \emph{free $\bfA$-module} over~$Y$.

Denote by $\bfA\FFreecat{C}$ the full subcategory of $\bfA\MModcat{C}$ on free $\bfA$-modules. Equivalently but more accurately, $\bfA\FFreecat{C}$ is taken to have the same objects as~$\cat C$ and morphisms of associated free modules. The Eilenberg-Moore
adjunction restricts to the so-called \emph{Kleisli~\cite{Kleisli65} adjunction}
$F_\bfA\,:\,\cat{C}\adj \bfA\FFreecat{C}\,:\,U_\bfA$. See~\eqref{eq:KE}.

Dually for comonads, comodules, etc. See~\cite[App.\,A]{Balmer12} if needed.
\end{Def}

\begin{Exa}
\label{exa:ring-obj}%
If the category $\cat C $ is monoidal with tensor $\otimes:\cat C\times \cat C\to \cat C$ and unit~$\unit$, a \emph{ring object} $(A,\mu,\eta)$ in~$\cat C$ is an object $A\in\cat C$ with morphisms $\mu:A\otimes A\to A$ and $\eta:\unit\to A$ such that $A\otimes-$ becomes a monad on~$\cat C$. Then $A$-modules are pairs $(X,\varrho)$ where $X$ is an object of~$\cat C$ and $\varrho:A\otimes X\to X$ is a morphism in~$\cat C$, which satisfies the above relations. When $\cat C=\bbZ\MMod$, this yields ordinary rings and modules.
\end{Exa}

\begin{Rem}
\label{rem:A-ic}%
If $\cat C$ is additive (resp.\ idempotent-complete) and the monad $\bfA$ is an additive functor, then $\bfA\MModcat{C}$ is additive (resp.\ idempotent-complete) as well.
\end{Rem}

\begin{Rem}
\label{rem:KE}%
Every adjunction $L\,:\,\cat C\adj \cat D\,:\,R$ with unit
$\eta:\Idcat{C}\to RL$ and counit $\eps:LR\to \Idcat{D}$ gives rise
to a monad on~$\cat C$, defined by $\bfA=RL$, $\eta=\eta$ and $\mu=R\eps
L$. One says that the adjunction $(L,R,\eta,\eps)$ \emph{realizes}
the monad~$(\bfA,\mu,\eta)$. The Kleisli and Eilenberg-Moore adjunctions of
Definition~\ref{def:monad} are respectively initial and final among adjunctions realizing~$\bfA$, see~\cite[Chap.\,VI]{MacLane98}. That is, if an adjunction $L\,:\,\cat C\adj \cat D\,:\,R$ realizes a monad~$\bfA$ then there exist unique functors $K:\bfA\FFreecat{C}\to \cat D$ and $E:\cat D\to \bfA\MModcat{C}$ as in the following diagram\,:
\begin{equation}
\label{eq:KE}%
\vcenter{\xymatrix@C=6em@R=2.5em{
& \cat{C}\vphantom{_Y} \ar@<-.2em>[ld]_-{F_\bfA} \ar@<-.2em>[d]_(.55){L} \ar@<-.2em>[rd]_(.55){F_\bfA}
 \ar@(lu,ru)[]^(.22){\bfA}
\\
\bfA\FFreecat{C} \ar[r]_-{K} \ar@<-.2em>[ru]_-{U_\bfA}
 \ar@{_(->}@/_1.5em/[rr]|{\textrm{\ fully faithful\ }}
& \cat{D} \ar@<-.2em>[u]_(.45){R}
\ar[r]_-{E}
&
\bfA\MModcat{C} \ar@<-.2em>[lu]_-{U_\bfA}
}}
\end{equation}
such that $K\circ F_\bfA=L$, $R\circ K=U_\bfA$, $E\circ L=F_\bfA$ and
$U_\bfA\circ E=R$. Explicitly, $K$ is defined by $K(F_{\bfA}(Y))=L(Y)$ on objects and by the isomorphism $\Hom_{\bfA\FFreecat{C}}(F_\bfA(Y),F_\bfA(Y'))\cong\Homcat{C}(Y,\bfA(Y'))\cong\Homcat{D}(L(Y),L(Y'))$ on morphisms. In particular, $K$ is always fully-faithful. On the other hand, we have
\begin{equation}
\label{eq:E}%
E(Z)=\big(R(Z),R(\eps_Z)\big)\qquadtext{and}E(f)=R(f)
\end{equation}
for every object $Z$ and every morphism $f$ in~$\cat D$. Note also that
$E\circ K$ is the fully faithful inclusion of $\bfA\FFreecat{C}$
into~$\bfA\MModcat{C}$.

An adjunction $L\,:\,\cat C\adj \cat D\,:\,R$ is called \emph{monadic} when the Eilenberg-Moore functor $E:\cat D\to (RL)\MMod_{\cat C}$ of~\eqref{eq:KE} is an equivalence of categories. Dually, it is \emph{comonadic} if the Eilenberg-Moore functor $\cat C\to (LR)\CComod_{\cat D}$ is an equivalence.
\end{Rem}

\begin{Rem}
\label{rem:Beck}%
There is a famous Monadicity Theorem of Beck~\cite{Beck03} which we do not use here for various reasons. First, in our case the proof is much simpler. Lemma~\ref{lem:monadicity} below has all the monadicity we need and is considerably faster to state, prove and use than Beck's result. Most important, we need to extend our results to triangulated categories, where we should avoid colimits, like coequalizers. Yet Lemma~\ref{lem:monadicity} easily extends to triangulated categories and also gives \emph{separability} of~$\bfA=RL$, which is crucial to put a triangulation on $\bfA\MModcat{C}$, by~\cite[\S\,4]{Balmer11}.
\end{Rem}

\begin{Def}
\label{def:sep-monad}%
A monad $\bfA:\cat C\to \cat C$ is called a \emph{separable monad} if $\mu:\bfA^2\to \bfA$ admits an ``$\bfA$-bilinear section", \ie a natural transformation $\sigma:\bfA\to \bfA^2$ such that $\mu\circ\sigma=\id_\bfA$ and
$(\bfA\mu)\circ(\sigma \bfA)=\sigma\circ\mu=(\mu
\bfA)\circ(\bfA\sigma)\,:\,\bfA^2\to \bfA^2$.
\end{Def}

\begin{Lem}
\label{lem:monadicity}%
Let $L\,:\,\cat C\adj \cat D\,:\,R$ be an adjunction of functors between additive categories. Suppose that the counit $\eps:L\, R\to\Idcat{D}$ has a natural section, \ie there is a natural transformation $\xi:\Idcat{D}\to L\, R$ such that
$\eps\circ\xi=\id$. Then\,:
\begen
\item The induced monad $\bfA=R\,L$ on~$\cat C$ is separable.
\smallbreak
\item The Kleisli and Eilenberg-Moore comparison functors
$K:\bfA\FFreecat{C}\to \cat D$ and $E:\cat{D}\to \bfA\MModcat{C}$
of~\eqref{eq:KE} are equivalences up to direct summands.
\smallbreak
\item If we assume moreover that $\cat C$ and $\cat D$ are
idempotent-complete then $E:\cat{D}\isoto \bfA\MModcat{C}$ is an
equivalence, \ie the adjunction is monadic.
\ened
\end{Lem}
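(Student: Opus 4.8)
The plan is to extract everything from the single datum $\xi$, using only the naturality of $\eps$ and $\xi$ and the relation $\eps\circ\xi=\id$.

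\emph{Part (a).} For the separating section of Definition~\ref{def:sep-monad} I would take $\sigma:=R\xi L:\bfA\to\bfA^2$. That $\mu\circ\sigma=\id_\bfA$ is immediate: at an object $X$ it reads $R(\eps_{LX})\circ R(\xi_{LX})=R(\eps_{LX}\circ\xi_{LX})=R(\id_{LX})=\id$. Both bilinearity identities reduce to the single equation $\sigma\circ\mu=R(\xi\circ\eps)L$, where $\xi\circ\eps$ is the idempotent endomorphism of $LR$: applying $R$ to the naturality square of $\xi$ along the morphism $\eps_{LX}$ yields $(\bfA\mu)\circ(\sigma\bfA)=\sigma\circ\mu$, while applying $R$ to the naturality square of $\eps$ along $\xi_{LX}$ yields $(\mu\bfA)\circ(\bfA\sigma)=\sigma\circ\mu$. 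This is a short chase with no genuine choice involved.

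\emph{Part (b).} The conceptual core is that separability forces \emph{every $\bfA$-module to be a retract of a free one}; this is what promotes the always-fully-faithful $K$ and the comparison $E$ to equivalences up to direct summands. Given a module $(X,\varrho)$, the action $\varrho:(\bfA X,\mu_X)\to(X,\varrho)$ is the $\bfA$-linear Eilenberg--Moore counit, and I would split it by $s:=\bfA(\varrho)\circ\sigma_X\circ\eta_X:X\to\bfA X$. One gets $\varrho\circ s=\id_X$ from $\mu\circ\sigma=\id$ and the module axioms; the subtle point is that $s$ is \emph{$\bfA$-linear}, i.e.\ $\mu_X\circ\bfA(s)=s\circ\varrho$, and here both sides collapse to $\bfA(\varrho)\circ\sigma_X$ after feeding in the two bilinearity relations of~$\sigma$ together with the unit and associativity laws and naturality. \textbf{This $\bfA$-linearity verification is the main obstacle}: it is the only place where the full force of Definition~\ref{def:sep-monad}, and not merely $\mu\circ\sigma=\id$, is needed.

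With this in hand the two comparison functors follow quickly. For $K:\bfA\FFreecat{C}\to\cat D$: it is fully faithful by Remark~\ref{rem:KE}, and every $Z\in\cat D$ is a retract of $K(F_\bfA(RZ))=LRZ$ via $\xi_Z$ and $\eps_Z$ (since $\eps_Z\circ\xi_Z=\id_Z$), so by the criterion of Remark~\ref{rem:ic} it is an equivalence up to direct summands. For $E:\cat D\to\bfA\MModcat{C}$ I would establish full faithfulness directly from~$\xi$. Faithfulness holds because $\eps$ is a split epimorphism: if $R(f)=R(f')$ then $f\circ\eps_Z=\eps_{Z'}\circ LR(f)=\eps_{Z'}\circ LR(f')=f'\circ\eps_Z$ by naturality, whence $f=f'$. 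Fullness holds because an $\bfA$-linear $g:RZ\to RZ'$ is the image under $R$ of $\eps_{Z'}\circ L(g)\circ\xi_Z$, using $\bfA$-linearity of $g$ and $\eps\circ\xi=\id$. Essential surjectivity up to summands is then immediate, since $E(LX)=(RLX,R\eps_{LX})=F_\bfA(X)$ by \eqref{eq:E}, so the retract result exhibits every module as a summand of some $E(LX)$.

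\emph{Part (c).} As $L$ and $R$ are adjoint functors between additive categories they are additive, hence so is $\bfA=RL$; thus if $\cat C$ is idempotent-complete then $\bfA\MModcat{C}$ is idempotent-complete by Remark~\ref{rem:A-ic}. Since $\cat D$ is idempotent-complete by hypothesis, the equivalence up to direct summands $E$ from part (b) becomes an honest equivalence: by Remark~\ref{rem:ic} the functor $E^\natural$ is an equivalence, and the canonical functors $\cat D\to\cat D^\natural$ and $\bfA\MModcat{C}\to(\bfA\MModcat{C})^\natural$ are equivalences, so $E$ itself is. By Remark~\ref{rem:KE} this is precisely monadicity.
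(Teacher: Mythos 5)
Your proof is correct, and its skeleton coincides with the paper's: the same section $\sigma=R\xi L$ verified by the same two naturality squares for part (a), the same $\bfA$-linear retraction $\bfA(\varrho)\circ\sigma_X\circ\eta_X$ showing every module is a summand of a free one, the same split-counit argument making every $Z\in\cat D$ a summand of $LR(Z)=K(F_\bfA(RZ))$, and the same passage to idempotent completions for part (c) (where you usefully make explicit the point, left implicit in the paper, that $\bfA=RL$ is additive because adjoints between additive categories are automatically additive). The one genuine divergence is how part (b) is concluded for $E$: the paper never checks full-faithfulness of $E$ directly, but instead observes that $E\circ K$ is the inclusion $\bfA\FFreecat{C}\hookrightarrow\bfA\MModcat{C}$, which the retract lemma makes an equivalence up to summands; since $K$ is one as well, $E$ follows by a purely formal two-out-of-three argument on idempotent completions. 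You instead verify by hand that $E$ is fully faithful --- faithfulness because $\eps$ is a split epimorphism pointwise, fullness via the explicit preimage $\eps_{Z'}\circ L(g)\circ\xi_Z$ of an $\bfA$-linear $g:RZ\to RZ'$ --- and invoke the retract lemma only for essential surjectivity up to summands. Your computations are sound (in particular $R(\eps_{Z'}\circ L(g)\circ\xi_Z)=g$ does follow from exactly the two ingredients you name, $\bfA$-linearity of $g$ and $\eps\circ\xi=\id$). The paper's route is shorter and avoids any further diagram chase; yours buys an explicit lifting formula for module morphisms and a proof of monadicity that does not route through the Kleisli category, at the cost of two extra verifications.
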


\begin{proof}
All this is standard but we sketch the proof for the reader's convenience. First verify that $\sigma:=R\xi L:\bfA=RL\too RLRL=\bfA^2$ is a section of~$\mu=R\eps L$ and that $\sigma$ is ``$\bfA$-bilinear" by naturality of~$\eps$ and~$\xi$. This gives~(a). When $\bfA$ is separable with section~$\sigma$ of~$\mu$, one can verify that for every $\bfA$-module $(X,\varrho)$ the $\bfA$-linear morphism $\varrho:F_\bfA(X)\to X$, from the free $\bfA$-module $F_\bfA(X)$ to~$X$, admits an $\bfA$-linear retraction given by $\bfA(\varrho)\,\sigma\,\eta_X:X\to F_\bfA(X)$; see~\cite[Prop.\,6.3]{BruguieresVirelizier07}. Consequently, the fully faithful inclusion $\bfA\FFreecat{C}\hook \bfA\MModcat{C}$ is an equivalence up to direct summands (Remark~\ref{rem:ic}). Since this inclusion coincides with $E\circ K$, it suffices to prove~(b) for $K$ to get both. Recall that $K:\bfA\FFreecat{C}\to \cat {D}$ is fully faithful already. Now, for every~$D\in\cat D$, we have by hypothesis $\eps_D\circ\xi_D=\id_D:D\to LR(D)\to D$ which shows that $D$ is a direct summand of~$LR(D)=K(F_\bfA(R(D)))$. Hence $K$ is essentially surjective up to direct summands. This finishes~(b). Finally~(c) is a direct consequence of the equivalence $\ic{E}:\iccat{D}\isoto\ic{\bfA\MModcat{C}}$ of~(b), since both $\cat D$ and $\bfA\MModcat{C}$ are idempotent-complete (Remark~\ref{rem:A-ic}).
\end{proof}

An ancestor of our Theorem~\ref{thm:main} is the following application of Lemma~\ref{lem:monadicity}\,:

\begin{Thm}
\label{thm:ancestor}%
Let $\ell:D\to C$ be a homomorphism of (ordinary) rings, which admits a homomorphism of $(D,D)$-bimodules $m:C\to D$ with $m\circ \ell=\id_D$. Consider the usual restriction-coinduction adjunction on categories of left modules\,:
\begin{equation}
\label{eq:adj}%
\vcenter{\xymatrix{
\cat{C}:=C\MMod \ar@<-.5em>[d]_-{\Displ\Res_\ell\cong C\otimes_C-\ }
\\
\cat D:=D\MMod  \ar@<-.5em>[u]_-{\Displ\ \CoInd_{\ell}=\Hom_{D}(C,-)}
}}
\end{equation}
\begen[(a)]
\item The counit $\eps:\Res_\ell\CoInd_\ell\to\Idcat{D}$ admits a natural section.
\smallbreak
\item
Adjunction~\eqref{eq:adj} is monadic (Remark~\ref{rem:KE}), \ie for the monad $\bfA=\CoInd_\ell\Res_\ell$ on~$\cat C$, the functor $E:\cat D\too \bfA\MMod_{\cat C}$ of~\eqref{eq:KE} is an equivalence turning $\Res_\ell$ into the extension-of-scalars functor $F_{\bfA}$ and $\CoInd_\ell$ into the forgetful~$U_\bfA$.
\ened
\end{Thm}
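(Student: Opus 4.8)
The plan is to prove (a) by writing down an explicit section and then to deduce (b) formally from Lemma~\ref{lem:monadicity}. First I would pin down the conventions: $\Res_\ell$ is the \emph{left} adjoint and $\CoInd_\ell=\Hom_D(C,-)$ the right adjoint, where $C$ is regarded as a $(D,C)$-bimodule whose left $D$-action is taken through~$\ell$. Thus in the notation of Remark~\ref{rem:KE} we have $L=\Res_\ell$, $R=\CoInd_\ell$ and the monad is $\bfA=R\,L=\CoInd_\ell\Res_\ell$ on $\cat C=C\MMod$. A short computation identifies the counit $\eps_N:\Res_\ell\CoInd_\ell(N)=\Hom_D(C,N)\to N$ with evaluation at the unit, $\phi\mapsto\phi(1_C)$, and checks that it is $D$-linear.

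For (a) I would use the $(D,D)$-bimodule retraction $m:C\to D$ to define, for each $D$-module $N$, the map $\xi_N:N\to\Hom_D(C,N)$ by $\xi_N(n)(c)=m(c)\cdot n$. This requires four checks, all routine: that each $\xi_N(n)$ lies in $\Hom_D(C,N)$ (using that $m$ is \emph{left} $D$-linear); that $\xi_N$ is $D$-linear, hence a morphism of $\cat D$ (using that $m$ is \emph{right} $D$-linear); that $\xi$ is natural in $N$ (from $D$-linearity of the morphisms of $\cat D$); and finally the section identity $\eps_N\circ\xi_N=\id_N$, which holds because $\eps_N(\xi_N(n))=m(1_C)\cdot n=n$, using $m(1_C)=m(\ell(1_D))=1_D$ from the hypothesis $m\circ\ell=\id_D$.

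For (b) I would feed (a) directly into Lemma~\ref{lem:monadicity}: the natural section $\xi$ of the counit yields at once that $\bfA$ is separable and that the Eilenberg--Moore functor $E:\cat D\to\bfA\MModcat{C}$ is an equivalence up to direct summands. To upgrade this to an honest equivalence I would invoke part~(c) of the lemma, whose only extra hypothesis is that $\cat C$ and $\cat D$ be idempotent-complete; but $C\MMod$ and $D\MMod$ are abelian, hence idempotent-complete. So $E$ is an equivalence and the adjunction is monadic, while the asserted identifications $\Res_\ell\cong F_\bfA$ and $\CoInd_\ell\cong U_\bfA$ are precisely the relations $E\circ L=F_\bfA$ and $U_\bfA\circ E=R$ recorded in Remark~\ref{rem:KE}. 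I expect the only genuinely delicate point to be the bookkeeping in~(a): keeping the two one-sided $D$-module structures on~$C$ apart and matching each to the corresponding side of the bilinearity of~$m$; once the formula $\xi_N(n)(c)=m(c)\cdot n$ is in place, everything else is mechanical.
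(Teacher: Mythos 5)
Your proposal is correct and takes essentially the same route as the paper: your explicit section $\xi_N(n)(c)=m(c)\cdot n$ is exactly the paper's $\xi=m^*$ under the identification $\Id_{\cat D}\cong\Hom_D(D,?)$ (the paper packages the checks functorially via $\eps\,\xi=\ell^*m^*=(m\ell)^*=\id$), and both arguments then deduce (b) from Lemma~\ref{lem:monadicity}, with your remark on idempotent-completeness of module categories making explicit what the paper leaves implicit.
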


\begin{proof}
By Lemma~\ref{lem:monadicity}, it suffices to show~(a). Recall that for a $(D,D)$-bimodule~$B$, the abelian groups $\Hom_{D}(B,?)$ are \emph{left} $D$-modules via right action on~$B$, that is, $(d\cdot f)(b):=f(b\,d)$. Therefore, a homomorphism $k:B\to B'$ of $(D,D)$-bimodules induces a natural transformation $k^*:\Hom_D(B',?)\to \Hom_D(B,?)$, $f\mapsto f\circ k$, of endofunctors on~$\cat D$. Apply this to $k=\ell$ and $k=m$. Under the identification $\Id_{\cat C}\cong\Hom_D(D,?)$, the counit $\eps:\Res_\ell(\CoInd_\ell(?))=\Hom_D(C,?)\to \Hom_D(D,?)\cong\Id_{\cat D}(?)$ of adjunction~\eqref{eq:adj} is simply given by $\eps=\ell^*$. Hence a section $\xi$ of~$\eps$ is given by $\xi=m^*$ since $\eps\,\xi=\ell^*m^*=(m\ell)^*=\id$.
\end{proof}

%------------------------------------------------------------------------------
\medbreak
\section{Restriction of group representations}
\label{se:ResGH}%
\medbreak
%------------------------------------------------------------------------------

In this section, $H\leq G$ is a subgroup, without any finiteness assumption at first.

\begin{Rem}
Consider adjunction~\eqref{eq:adj} for $\ell:\kk H\hook \kk G$ the inclusion\,:
\begin{equation}
\label{eq:res-coind}%
\vcenter{\xymatrix{
\kk G\MMod \ar@<-.5em>[d]_{\Displ\Res^G_H\ } \\
\kk H\MMod\,. \ar@<-.2em>[u]_-{\Displ\ \CoInd_H^G=\Hom_{\kk H}(\kk G,-)}
}}
\end{equation}
We abbreviate $\Res=\Res^G_H$ and $\CoInd=\CoInd_H^G$ when no confusion can occur.
Explicitly, the unit $\eta_V:V\to
\CoInd(\Res(V))=\Hom_{\kk H}(\kk G,V)$ for $V$ in $\kk G\MMod$ and the
counit $\eps_W:\Hom_{\kk H}(\kk G,W)=\Res(\CoInd(W))\to W$ for
$W$ in $\kk H\MMod$ are given for every $v\in V$, $x\in \kk G$ and $f\in \Hom_{\kk H}(\kk G,W)$ by the formulas\,:
\begin{equation}
\label{eq:eta-eps}%
(\eta_V(v))(x)=x\cdot v\qquadtext{and}\eps_W(f)=f(1)\,.
\end{equation}
\end{Rem}

\begin{Rem}
\label{rem:A}%
Applying Remark~\ref{rem:KE} to the restriction-coinduction adjunction~\eqref{eq:res-coind}, we obtain a monad $\bfA=\bfA^G_H$ on $\kk G\MMod$ given by $\bfA=\CoInd_H^G\circ\Res^G_H$, that is, $\bfA(V)=\Hom_{\kk H}(\kk G,V)$ for every $V$ in $\kk G\MMod$. The left $\kk G$-action on $\Hom_{\kk H}(\kk G,V)$ is via right action on~$\kk G$. The unit $\eta:\Id\to \bfA$ is exactly the one of~\eqref{eq:eta-eps}. Multiplication $\mu:\bfA^2\to \bfA$ is $\mu=\CoInd\eps\Res$, where $\eps:\Res\CoInd\to \Id$ is the counit given in~\eqref{eq:eta-eps}. So, for every $V$ in $\kk G\MMod$, we explicitly give
$$
\mu_V\,:\ \bfA^2(V)=\Hom_{\kk H}(\kk G,\Hom_{\kk H}(\kk G,V)) \too
\Hom_{\kk H}(\kk G,V)=\bfA(V)
$$
by $\big(\mu_V(f)\big)(x)=(f(x))(1)$, for every $f\in \bfA^2(V)$ and every~$x\in \kk G$.
\end{Rem}

We temporarily denote by $\cat C(G)=\kk G\MMod$ the category of left $\kk G$-modules.
\begin{Thm}
\label{thm:paradigm}%
Let $H\leq G$ be an arbitrary subgroup. Let $\bfA=\bfA^G_H$ be the monad on $\cat C(G)$ induced by the restriction-coinduction adjunction. By Eilenberg-Moore, see~\eqref{eq:KE}, we have the following diagram in which $E\circ\Res=F_\bfA$ and $U_\bfA\circ E=\CoInd$\,:
$$
\xymatrix@C=3em@R=2em{
& \cat C(G)\vphantom{_Y}
 \ar@<-.2em>[ld]_(.55){\Res} \ar@<-.2em>[rd]_(.45){F_\bfA}
\\
\cat C(H)\ar@<-.2em>[ru]_(.55){\CoInd}
\ar[rr]_-{E}
&&
\bfA\MModcat{C(G)} \ar@<-.2em>[lu]_-{U_\bfA}
}
$$
Then $E$ is an equivalence. In words, the category $\cat C(H)$ is equivalent to $\bfA$-modules in $\cat C(G)$ in such a way that restriction coincides with extension-of-scalars with respect to $\bfA$ and coinduction coincides with the functor which forgets $\bfA$-actions.
\end{Thm}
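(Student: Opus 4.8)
The plan is to realize Theorem~\ref{thm:paradigm} as a direct instance of the ring-theoretic Theorem~\ref{thm:ancestor}, applied to the inclusion of group algebras $\ell:\kk H\hook \kk G$. With $D=\kk H$ and $C=\kk G$, the adjunction~\eqref{eq:adj} of Theorem~\ref{thm:ancestor} is exactly the restriction-coinduction adjunction~\eqref{eq:res-coind}: here $\Res^G_H=\Res_\ell$ and $\CoInd_H^G=\Hom_{\kk H}(\kk G,-)=\CoInd_\ell$. Consequently the induced monad agrees with $\bfA=\bfA^G_H=\CoInd\,\Res$ of Remark~\ref{rem:A}, and the comparison functor $E$ in the statement is the Eilenberg-Moore functor produced by Theorem~\ref{thm:ancestor}. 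The identities $E\circ\Res=F_\bfA$ and $U_\bfA\circ E=\CoInd$ are then automatic from the general Eilenberg-Moore formalism recalled in Remark~\ref{rem:KE} (with $L=\Res$ and $R=\CoInd$). So the whole content reduces to checking the single hypothesis of Theorem~\ref{thm:ancestor}(a), namely that $\ell$ admits a $(\kk H,\kk H)$-bimodule retraction.

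For this I would exhibit $m:\kk G\to \kk H$ explicitly as the $\kk$-linear projection fixing each group element $g\in H$ and killing each $g\in G\setminus H$; equivalently, $m$ is the projection associated with the decomposition $\kk G=\kk H\oplus N$, where $N$ is the $\kk$-span of the basis elements lying outside~$H$. The only point to verify is that this is a decomposition of $(\kk H,\kk H)$-bimodules and not merely of $\kk$-modules, i.e.\ that both $H$ and its complement $G\setminus H$ are stable under left and right multiplication by~$H$. This is precisely where the \emph{subgroup} hypothesis enters: for $h_1,h_2\in H$ one has $h_1\,g\,h_2\in H$ if and only if $g\in H$, which is exactly the statement that $m$ is $(\kk H,\kk H)$-bilinear. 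That $m\circ\ell=\id_{\kk H}$ is clear.

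Granting this splitting, Theorem~\ref{thm:ancestor} delivers monadicity of~\eqref{eq:res-coind}, that is, $E:\cat C(H)\isoto\bfA\MModcat{C(G)}$ is an equivalence, which is the assertion of Theorem~\ref{thm:paradigm}. I expect the sole genuine verification to be the bilinearity of~$m$, and it is routine; there is no real obstacle, precisely because Lemma~\ref{lem:monadicity} (through Theorem~\ref{thm:ancestor}) has already absorbed all the categorical work into the existence of one bimodule retraction. Finally, I would emphasise that this argument makes no use of the index $\Index GH$: the projection $m$ is well-defined for an arbitrary subgroup, since each element of $\kk G$ is a finite combination of group elements. Hence Theorem~\ref{thm:paradigm} holds with no finiteness assumption whatsoever, in contrast with Theorem~\ref{thm:main}, where the ring object $A^G_H=\kk(G/H)$ forces $\Index GH$ to be finite.
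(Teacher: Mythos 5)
Your proposal is correct and is essentially identical to the paper's own proof: both reduce Theorem~\ref{thm:paradigm} to Theorem~\ref{thm:ancestor} by exhibiting the very same $(\kk H,\kk H)$-bimodule retraction $m:\kk G\to \kk H$ of $\ell$, defined by $m(g)=g$ for $g\in H$ and $m(g)=0$ for $g\notin H$, extended $\kk$-linearly, with no finiteness hypothesis needed. Your explicit check that $h_1\,g\,h_2\in H$ if and only if $g\in H$ is exactly the (routine) bilinearity verification the paper leaves implicit.
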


\begin{proof}
To apply Theorem~\ref{thm:ancestor}, it suffices to show that $\ell:\kk H\hook \kk G$ has a retraction $m:\kk G\to \kk H$ as $(\kk H,\kk H)$-bimodule. For every $g\in G$, define $m(g)=g$ if $g\in H$ and $m(g)=0$ if $g\notin H$ and extend it $\kk$-linearly to get the wanted $m:\kk G\to \kk H$.
\end{proof}

\begin{Rem}
\label{rem:EGH}%
By~\eqref{eq:E}, for every $\kk H$-module $W$, the $\bfA$-module $E(W)=(V, \varrho)$
is given by $V:=\CoInd_H^G(W)=\Hom_{\kk H}(\kk G,W)$ with $\bfA$-action $\varrho$ in~$\cat C(G)$
$$
\xymatrix@C=8em{\bfA(V) = \Hom_{\kk H}(\kk G,\Hom_{\kk H}(\kk G,W))\ar[r]^-{\Displ\varrho:=\CoInd(\eps_W)}
& \Hom_{\kk H}(\kk G,W) = V}
$$
given by $\big(\varrho(f)\big)(x)=\big(f(x)\big)(1)$, for every $f\in \bfA(V)$ and every $x\in \kk G$.
\end{Rem}

\begin{center}*\ *\ *\end{center}
\goodbreak

For the rest of the section, we further assume that the index $\Index{G}{H}$ is finite.

\begin{Rem}
We can now replace coinduction by induction and get the adjunction
\begin{equation}
\label{eq:res-ind}%
\vcenter{\xymatrix{
\cat C(G)=\kk G\MMod \ar@<-.5em>[d]_{\Displ\Res^G_H} \\
\cat C(H)=\kk H\MMod\,. \ar@<-.2em>[u]_-{\Displ\Ind_H^G=\kk G\otimes_{\kk H}-}
}}
\end{equation}
The unit $\eta'_V:V\to \Ind(\Res(V))=\kk G\otimes_{\kk H}V$ for $V$ in $\kk G\MMod$ and the counit $\eps'_W:\kk G\otimes_{\kk H}W=\Res(\Ind(W))\to W$ for $W$ in $\kk H\MMod$ are given by the formulas\,:
\begin{equation}
\label{eq:eta-eps'}%
\eta'_V(v)=\sum_{[x]_{H}\,\in\, G/H}x\otimes x\inv v\qquadtext{and}
\eps'_W(g\otimes w)=
\left\{\begin{array}{cc}g\cdot w &\text{if }g\in H
\\[.5em]
0&\text{if }g\notin H
\end{array}\right.
\end{equation}
for every $v\in V$, $g\in G$ and $w\in W$. (We write $[x]_H$ for $xH$ everywhere.) We denote by $\bfA\!'=\Ind\Res$ the monad on $\cat C(G)$ associated to this adjunction (Remark~\ref{rem:KE}).
\end{Rem}

\begin{Cor}
\label{cor:para-fin}%
With the above notation, adjunction~\eqref{eq:res-ind} is monadic, \ie the associated Eilenberg-Moore functor $E':\cat C(H)\to \bfA\!'\MMod_{\cat C(G)}$ is an equivalence.
\end{Cor}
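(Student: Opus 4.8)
The plan is to apply Lemma~\ref{lem:monadicity} directly to the restriction-induction adjunction~\eqref{eq:res-ind}, in which $\Res=\Res^G_H$ plays the role of the left adjoint~$L$ and $\Ind=\Ind_H^G$ that of the right adjoint~$R$, so that the induced monad $RL$ is precisely $\bfA'=\Ind\,\Res$ on $\cat C(G)$. By Lemma~\ref{lem:monadicity}(c) it then suffices to produce a natural section $\xi:\Idcat{C(H)}\to \Res\,\Ind$ of the counit $\eps':\Res\,\Ind\to \Idcat{C(H)}$, and to observe that both $\cat C(G)=\kk G\MMod$ and $\cat C(H)=\kk H\MMod$ are idempotent-complete, being abelian.

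First I would write down the obvious candidate: for a $\kk H$-module~$W$, define $\xi_W:W\to \kk G\otimes_{\kk H}W=\Res\,\Ind(W)$ by $\xi_W(w)=1\otimes w$. Two of the three verifications are immediate. Naturality in~$W$ follows since for $\kk H$-linear $\phi:W\to W'$ one has $(\Res\,\Ind\,\phi)(1\otimes w)=1\otimes\phi(w)=\xi_{W'}(\phi(w))$. And $\eps'\circ\xi=\id$ follows at once from the formula~\eqref{eq:eta-eps'} for the counit: as $1\in H$, we get $\eps'_W(1\otimes w)=1\cdot w=w$.

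The one point requiring a moment's care---and the closest thing to an obstacle here---is checking that $\xi_W$ is a \emph{morphism in $\cat C(H)$}, i.e.\ that it is $\kk H$-linear for the $\kk H$-action on $\Res\,\Ind(W)$ given by $h\cdot(g\otimes w)=hg\otimes w$. For this I would use that the tensor product is balanced over~$\kk H$: for $h\in H$ we have $h\cdot\xi_W(w)=h\otimes w=(1\cdot h)\otimes w=1\otimes hw=\xi_W(hw)$, the middle equality being exactly the defining relation of $\kk G\otimes_{\kk H}W$. With $\xi$ in hand, Lemma~\ref{lem:monadicity} yields separability of $\bfA'$ and, thanks to idempotent-completeness, the monadicity of~\eqref{eq:res-ind}, i.e.\ that $E':\cat C(H)\to \bfA'\MModcat{C(G)}$ is an equivalence.

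Alternatively, one could avoid any fresh computation by invoking the natural isomorphism $\Ind_H^G\cong\CoInd_H^G$ available for finite index, which identifies the adjunction~\eqref{eq:res-ind} with the restriction-coinduction adjunction~\eqref{eq:res-coind} whose monadicity is Theorem~\ref{thm:paradigm}; but this would require checking that the isomorphism is compatible with the two counits. I prefer the direct argument above, as the explicit section $\xi_W=(w\mapsto 1\otimes w)$ is the induction-side counterpart of the $(\kk H,\kk H)$-bilinear retraction~$m$ used in Theorem~\ref{thm:paradigm}, and makes the separability of~$\bfA'$ transparent.
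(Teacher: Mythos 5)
Your proposal is correct and matches the paper's own proof: the paper establishes the corollary by noting $\Ind\cong\CoInd$ and invoking Theorem~\ref{thm:paradigm}, and then offers exactly your direct argument as an alternative, namely applying Lemma~\ref{lem:monadicity} to the section $\xi'_W(w)=1\otimes w$ of the counit~$\eps'$. Your write-up simply spells out the verifications (naturality, $\kk H$-linearity via the balanced tensor product, and $\eps'\circ\xi=\id$) that the paper leaves implicit, and your closing remark is precisely the paper's primary route.
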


\begin{proof}
Since $\Index{G}{H}<\infty$, we have the well-known isomorphism $\Ind\isoto\CoInd$; see~\cite[\S\,I.3.3]{Benson98}. We already know that the $\Res/\CoInd$ adjunction is monadic by Theorem~\ref{thm:paradigm}, then so is the isomorphic $\Res/\Ind$ adjunction. Alternatively, one can apply Lemma~\ref{lem:monadicity} and prove directly that the counit $\eps'$ has a natural section $\xi':\Id_{\cat C(H)}\to \Res\Ind$ given for every $\kk H$-module $W$ by $\xi'_W(w)=1\otimes w$.
\end{proof}

\begin{Rem}
\label{rem:Frob}%
Recall that $\kk G\MMod$ is symmetric monoidal via $V_1\otimes V_2=V_1\otimes_\kk V_2$ with diagonal $G$-action. Also recall the natural isomorphism of $\kk G$-modules
$$
\vartheta_V\,:\ \bfA\!'(V)=\kk G\otimes_{\kk H}V\isotoo \kk(G/H)\otimes_\kk V
$$
for every $V$ in $\kk G\MMod$, where the $G$-action on $\bfA\!'(V)$ is on~$\kk G$ only, whereas the $G$-action on $\kk(G/H)\otimes_\kk V$ is the diagonal one. This isomorphism $\vartheta_V$ is given by
\begin{equation}
\label{eq:theta}%
\vartheta_V(g\otimes v)=[g]_H\otimes gv
\end{equation}
for every $g\in G$ and $v\in V$. Its inverse is given for every $\gamma\in G/H$ and $v\in V$ by
\begin{equation}
\label{eq:theta-inv}%
\kern2em\vartheta_V\inv(\gamma\otimes v )=g\otimes g\inv v
\quad\text{ for any choice of }g\in \gamma
\end{equation}
Consequently, the monad $\bfA\!'=\Ind\Res$ is isomorphic, as a functor, to $A\otimes-$ for $A=\kk(G/H)$ and the latter will inherit a structure of ring object. However, this does not imply that $\vartheta$ is an isomorphism of monads\,! So, let us be precise.
\end{Rem}

\begin{Def}
\label{def:AH}%
Define the $\kk G$-module $A=A^G_H$ to be the free $\kk$-module $\kk(G/H)$
with basis $G/H$ with obvious left $G$-action on the $\kk$-basis\,:
$g\cdot[x]_H=[gx]_H$. As in the Introduction, we define a $\kk G$-linear
morphism
$$
\mu: A^G_H\otimes_\kk A^G_H\too A^G_H
$$
by $\mu(\gamma\otimes\gamma)=\gamma$ and
$\mu(\gamma\otimes\gamma')=0$ if $\gamma\neq\gamma'$ in~$G/H$.
Since $H\leq G$ has finite index, we define the $\kk G$-linear map $\eta:\unit\to A^G_H$, \ie
$\eta:\kk\to \kk(G/H)$, by $1\mapsto\sum_{\gamma\in G/H}\gamma$.
\end{Def}

\begin{Rem}
Ignoring $G$-actions, this ring would be silly (just $\Index{G}{H}$ copies of~$\kk$) and its category of plain modules would consist of the direct sum of $\Index{G}{H}$ copies of the category of $\kk$-modules. Again, it is important to consider the ring object $A$ in~$\cat C(G)$, that is, to keep track of the $G$-action on~$A$, and to consider the category $A\MMod_{\cat{C}(G)}$ of $A$-modules \emph{in the category~$\cat C(G)$}, as emphasized already.
\end{Rem}

\begin{Prop}
\label{prop:AH}%
Let $H\leq G$ be a finite-index subgroup. Then\,:
\begen
\item
The triple $(A,\mu,\eta)$ of Definition~\ref{def:AH} is a commutative ring object in the symmetric monoidal category~$\kk G\MMod$ (also finite-dimensional over~$\kk$).
\smallbreak
\item
The ring object $A$ is \emph{separable} (Definition~\ref{def:sep-monad}), \ie there exists a section $\sigma:A\to A\otimes A$ of $\mu$ satisfying $(A\otimes\mu)\circ(\sigma \otimes
A)=\sigma\circ\mu=(\mu\otimes
A)\circ(A\otimes\sigma)$.
\smallbreak
\item
The monad $A\otimes-$ on $\kk G\MMod$ is isomorphic to the monad
$\bfA\!'=\Ind\Res$ associated to the restriction-induction adjunction~\eqref{eq:res-ind}. The explicit natural isomorphism $\vartheta:\bfA\!'\isoto A\otimes-$ is given in Remark~\ref{rem:Frob} above.
\ened
\end{Prop}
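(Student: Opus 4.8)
The plan is to dispatch (a) and (b) by recognizing the underlying $\kk$-algebra and then checking $G$-equivariance, and to spend the real effort on (c). For (a), forgetting the $G$-action, $A=\kk(G/H)$ is the split commutative $\kk$-algebra $\prod_{\gamma\in G/H}\kk$ whose basis vectors $\gamma\in G/H$ are orthogonal idempotents: the formulas $\mu(\gamma\otimes\gamma)=\gamma$, $\mu(\gamma\otimes\gamma')=0$ for $\gamma\neq\gamma'$, and $\eta(1)=\sum_\gamma\gamma$ are exactly $e_\gamma e_{\gamma'}=\delta_{\gamma\gamma'}e_\gamma$ and $1=\sum_\gamma e_\gamma$. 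Associativity, two-sided unitality and commutativity of $\mu$ are then immediate from these diagonal formulas, being symmetric in the basis. The only genuinely new point is $\kk G$-linearity of $\mu$ and $\eta$: since $G$ merely permutes the basis $G/H$ (with $g\gamma=g\gamma'\Leftrightarrow\gamma=\gamma'$), the Kronecker formula for $\mu$ is preserved, so $\mu$ is equivariant; and since $\eta(1)=\sum_\gamma\gamma$ is the $G$-invariant sum of all basis vectors, $\eta$ is $\kk G$-linear out of the trivial module $\unit$. Finite-dimensionality over $\kk$ is clear as $\Index{G}{H}<\infty$.

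For (b) I would exhibit the evident separability section $\sigma:A\to A\otimes A$, $\sigma(\gamma)=\gamma\otimes\gamma$, whose value $\sum_\gamma\gamma\otimes\gamma$ is the standard separability idempotent of a split algebra. It is $\kk G$-linear because $G$ acts diagonally on $A\otimes A$, and $\mu\circ\sigma=\id_A$ is immediate. The two identities of Definition~\ref{def:sep-monad} are then pure Kronecker-delta bookkeeping: evaluating $(A\otimes\mu)\circ(\sigma\otimes A)$, $\sigma\circ\mu$ and $(\mu\otimes A)\circ(A\otimes\sigma)$ on $\gamma\otimes\gamma'$ all yield $\delta_{\gamma\gamma'}(\gamma\otimes\gamma)$. (Alternatively, separability of the monad $A\otimes-$ follows automatically from Lemma~\ref{lem:monadicity}(a) once (c) is in hand, but the explicit $\sigma$ is cleaner and self-contained.)

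Part (c) is the crux, and the content of the warning in Remark~\ref{rem:Frob}: the map $\vartheta$ of~\eqref{eq:theta} is already a natural isomorphism of \emph{functors}, so all that remains is to verify it intertwines the two monad structures, i.e.\ that it is compatible with units and multiplications (the monad $A\otimes-$ having unit $\eta\otimes-$ and multiplication $\mu\otimes-$). Unit compatibility $\vartheta_V\circ\eta'_V=\eta\otimes\id_V$ is one line: by~\eqref{eq:eta-eps'}, $\vartheta_V\big(\sum_{[x]_H}x\otimes x\inv v\big)=\sum_{[x]_H}[x]_H\otimes v=(\sum_\gamma\gamma)\otimes v$. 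Multiplication compatibility requires $\vartheta_V\circ\mu'_V=(\mu\otimes\id_V)\circ(\vartheta\ast\vartheta)_V$, and this is where I expect the main obstacle. I would compute $\mu'_V=\Ind(\eps'_{\Res V})$ from~\eqref{eq:eta-eps'} as $g_1\otimes g_2\otimes v\mapsto g_1\otimes g_2v$ when $g_2\in H$ and $0$ otherwise; compute the horizontal (Godement) composite $(\vartheta\ast\vartheta)_V=(A\otimes\vartheta_V)\circ\vartheta_{\bfA\!'(V)}$ as $g_1\otimes g_2\otimes v\mapsto[g_1]_H\otimes[g_1g_2]_H\otimes g_1g_2v$, being careful to apply $\vartheta$ at the object $\bfA\!'(V)$ so that the internal action $g_1\cdot(g_2\otimes v)=g_1g_2\otimes v$ enters correctly; and then apply $\mu\otimes\id_V$. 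Both composites return $[g_1]_H\otimes g_1g_2v$ when $g_2\in H$ and $0$ otherwise, via the decisive elementary identity $[g_1]_H=[g_1g_2]_H\Leftrightarrow g_2\in H$.

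Conceptually, this last identity is exactly why the orthogonal-idempotent multiplication of Definition~\ref{def:AH} is forced: its vanishing off the diagonal is precisely calibrated to match the counit $\eps'$ killing the ``off-$H$'' part of $\kk G\otimes_{\kk H}(\kk G\otimes_{\kk H}V)$. The likeliest source of slips is the Godement composite $\vartheta\ast\vartheta$, where one must track the extra internal $G$-action, so I would write that single step out in full. Once (c) is established one also recovers (b) conceptually: $\bfA\!'$ is a separable monad by Lemma~\ref{lem:monadicity}(a), since its defining adjunction~\eqref{eq:res-ind} has a counit section by Corollary~\ref{cor:para-fin}, and separability transports across the monad isomorphism $\vartheta$ to $A\otimes-$.
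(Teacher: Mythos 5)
Your proposal is correct and takes essentially the same approach as the paper: part (a) by direct verification, part (b) via the same explicit section $\sigma(\gamma)=\gamma\otimes\gamma$ (with the same remark that (b) alternatively follows from (c) and separability of the monad), and part (c) by checking that $\vartheta$ respects units and multiplications, using the identical formulas for $\mu'_V$, for the composite $\vartheta^{(2)}_V(g_1\otimes g_2\otimes v)=[g_1]_H\otimes[g_1g_2]_H\otimes g_1g_2v$, and the same key identity $[g_1]_H=[g_1g_2]_H\Leftrightarrow g_2\in H$. In fact you write out in full the final computation that the paper leaves to the reader, and your answer ($[g_1]_H\otimes g_1g_2v$ if $g_2\in H$, else $0$) agrees with the paper's parenthetical check.
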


\begin{proof}
(a)\,: Associativity, two-sided unit and commutativity are easy exercises. Part (b) will follow from~(c) and the
separability of~$\bfA\!'\simeq \bfA$ but we can also provide $\sigma:A\to A\otimes A$
explicitly as $\sigma(\gamma)=\gamma\otimes \gamma$ for every
$\gamma\in G/H$. For~(c), we need to show that $\vartheta:\bfA\!'\isoto A\otimes -$ respects multiplications and units. The latter means $\vartheta_V\circ \eta'_V=\eta\otimes 1_V$ for every $V\in \kk G\MMod$ and is easy to verify using~\eqref{eq:eta-eps'}, \eqref{eq:theta} and Definition~\ref{def:AH}. For
compatibility with multiplication, we need to check commutativity of
the following square for every $\kk G$-module~$V$\,:
\begin{equation}
\vcenter{\label{eq:ring-hom}%
\xymatrix@C=3em{
{\bfA\!'\,}^2(V)=\kk G\otimes_{\kk H}(\kk G\otimes_{\kk H}V)
 \ar[r]_-{\Displ\vartheta^{(2)}_V}
 \ar[d]_-{\Displ\mu'_V}
& \kk(G/H)\otimes \kk(G/H)\otimes V=A\potimes{2}\otimes V \ar[d]^-{\Displ\mu\otimes \id_V}\kern-1em
\\
{\bfA\!'}(V)=\kk G\otimes_{\kk H}V \ar[r]^-{\Displ\vartheta_V}
& \kk(G/H)\otimes_\kk V=A\otimes V
}}
\end{equation}
The above morphism $\vartheta^{(2)}_V:\bfA\!'\circ \bfA\!'(V)\to A\otimes A\otimes V$ is by definition $\vartheta$ applied twice in any order, that is, $\vartheta^{(2)}_V=(A\otimes\vartheta_V)\circ\vartheta_{{\bfA\!'}(V)}=\vartheta_{A\otimes V}\circ \bfA\!'(\vartheta_{V})$. (These coincide by naturality of~$\vartheta$ applied to the morphism $\vartheta_V$.) In cash, we have for every $g,g'\in G$ and every $v\in V$
$$
\vartheta^{(2)}_V(g\otimes g'\otimes v)=
[g]_H\otimes [gg']_H\otimes gg'v\,.
$$
Finally, we need to make the multiplication $\mu':\bfA\!'\circ\bfA\!'\to \bfA\!'$ more explicit. By Remark~\ref{rem:KE}, it is given by $\mu'_V=\Ind(\eps'_{\Res V}):\kk G\otimes_{\kk H}(\kk G\otimes_{\kk H}V)\to \kk G\otimes_{\kk H}V$. By~\eqref{eq:eta-eps'}, we have for every $g,g'\in G$ and $v\in V$
$$
\mu'_V(g\otimes g'\otimes v)=
\left\{\begin{array}{cc}g\otimes g'v=gg'\otimes v &\text{if }g'\in H
\\[.5em]
0&\text{if }g'\notin H.
\end{array}\right.
$$
With all morphisms and all actions being now explicit, it is direct to check commutativity of~\eqref{eq:ring-hom}. We leave this computation to the reader. (For verification, the two compositions send $g\otimes g'\otimes v$ to $[g]_H\otimes gg'v$ if $g'\in H$ and to 0 otherwise.)
\end{proof}

We can now replace the monad $\bfA\!'=\Ind_H^G\Res^G_H$ on $\cat C(G)=\kk G\MMod$ by the ring object $A^G_H=\kk(G/H)$. This actually changes slightly the result in that $\Res$ is not \emph{equal} to extension-of-scalars on the nose but only naturally isomorphic to it.

\begin{Thm}
\label{thm:fin-ind}%
Let $H\leq G$ be a finite-index subgroup and recall the ring object~$A=A^G_H$ of Definition~\ref{def:AH}. Then there is an equivalence of categories $\Psi:\cat C(H)\isotoo A\MModcat{C(G)}$ making the following diagram commute up to isomorphism\,:
$$
\xymatrix@C=1em@R=2em{
& \cat C(G) \ar[ld]_(.6){\Displ \Res} \ar[rd]^(.6){\Displ F_{A}}
\\
\cat C(H) \ar[rr]^-{\cong}_-{\Displ\Psi}
&& A\MModcat{C(G)}\,.
}
$$
Explicitly, the functor $\Psi$ is given as follows. For every
$\kk H$-module~$W$, we have
$$
\Psi(W)=\big(V'\,,\,\varrho'_W\big)
$$
for $V':=\Ind_H^G(W)=\kk G\otimes_{\kk H}W$, equipped with the following $A$-action $\varrho'_W$ in~$\cat C(G)$
$$
\varrho'_W:\ A\otimes V'=\kk(G/H)\otimes_\kk (\kk G\otimes_{\kk H} W)\too
\kk G\otimes_{\kk H} W=V'
$$
given for every $\gamma\in G/H$, $g\in G$ and $w\in W$ by
$$
\varrho'_W(\gamma\otimes g\otimes w)=
\left\{\begin{array}{cc}
g\otimes w&\text{if }g\in \gamma
\\[.5em]
0&\text{otherwise.}
\end{array}\right.
$$
The isomorphism $\Psi\circ\Res\isoto F_A$ of functors from $\cat C(G)$ to $A\MModcat{C(G)}$ is given for every $\kk G$-module~$V$ by the classical isomorphism $\vartheta_V$ of~\eqref{eq:theta} as follows\,:
$$
\Psi\circ\Res(V)=\Ind\Res(V)=\kk G\otimes_{\kk H}V\underset{\vartheta_V}\isotoo
\kk(G/H)\otimes_kV=A\otimes V=F_A(V)\,.
$$
\end{Thm}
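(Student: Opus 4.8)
The plan is to assemble $\Psi$ as the composite of two equivalences already at hand: the monadic comparison for induction and the transport of modules along the monad isomorphism $\vartheta$. By Corollary~\ref{cor:para-fin} the Eilenberg-Moore functor $E':\cat C(H)\isoto\bfA\!'\MModcat{C(G)}$ is an equivalence for the induction monad $\bfA\!'=\Ind\Res$. By Proposition~\ref{prop:AH}(c), $\vartheta:\bfA\!'\isoto A\otimes-$ is an isomorphism of \emph{monads} (not merely of functors, as warned in Remark~\ref{rem:Frob}), so it induces an equivalence $\vartheta_\ast:\bfA\!'\MModcat{C(G)}\isoto A\MModcat{C(G)}$ by transporting actions: a $\bfA\!'$-module $(X,\varrho)$ is sent to the $A$-module $(X,\varrho\circ\vartheta_X^{-1})$, which is a legitimate $A$-action precisely because $\vartheta$ respects multiplications and units. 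I would then set $\Psi:=\vartheta_\ast\circ E'$, automatically an equivalence $\cat C(H)\isoto A\MModcat{C(G)}$.

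Next I would read off the explicit formula. By~\eqref{eq:E} we have $E'(W)=(\Ind(W),\Ind(\eps'_W))$, so $\Psi(W)$ has underlying object $V'=\kk G\otimes_{\kk H}W$ with $A$-action $\varrho'_W=\Ind(\eps'_W)\circ\vartheta_{V'}^{-1}$. Unwinding this with the inverse formula~\eqref{eq:theta-inv}, the counit~\eqref{eq:eta-eps'}, and the tensor relation over $\kk H$ gives the claimed description: choosing a representative $g'\in\gamma$, one computes $\vartheta_{V'}^{-1}(\gamma\otimes g\otimes w)=g'\otimes(g'^{-1}g\otimes w)$, applies $\id\otimes\eps'_W$, and uses $g'^{-1}g\in H\Leftrightarrow g\in\gamma$ to absorb $g'^{-1}g$ back across the tensor, yielding $\varrho'_W(\gamma\otimes g\otimes w)=g\otimes w$ if $g\in\gamma$ and $0$ otherwise.

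Finally I would check the triangle $\Psi\circ\Res\cong F_A$. Since $E'\circ\Res=F_{\bfA\!'}$ by the universal property recorded in~\eqref{eq:KE}, it remains to produce a natural isomorphism $\vartheta_\ast(F_{\bfA\!'}(V))\cong F_A(V)$. The obvious candidate is $\vartheta_V:\bfA\!'(V)\isoto A\otimes V$, and its $A$-linearity (from $(\bfA\!'(V),\mu'_V\circ\vartheta_{\bfA\!'(V)}^{-1})$ to $(A\otimes V,\mu\otimes\id_V)$) unwinds, after rewriting $(A\otimes\vartheta_V)\circ\vartheta_{\bfA\!'(V)}=\vartheta^{(2)}_V$, to exactly the commutativity of square~\eqref{eq:ring-hom} already established in Proposition~\ref{prop:AH}(c). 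Thus $\vartheta$ supplies the required natural isomorphism $\Psi\circ\Res\isoto F_A$, which is the classical $\vartheta_V$ named in the statement.

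I expect the only genuine work to be the bookkeeping in the computation of $\varrho'_W$; conceptually the argument is immediate once one observes that an isomorphism of monads transports module categories \emph{and} free modules compatibly, so there is no real obstacle beyond keeping the conventions for $\vartheta$, $\Ind$ and $\eps'$ straight.
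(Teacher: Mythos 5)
Your proposal is correct and follows essentially the same route as the paper: compose the Eilenberg--Moore equivalence $E'$ of Corollary~\ref{cor:para-fin} with the module-category equivalence induced by the monad isomorphism $\vartheta$ of Proposition~\ref{prop:AH}\,(c) (your $\vartheta_\ast$ is the paper's $\Theta=(\vartheta\inv)^*$), then unwind the formulas. In fact you supply slightly more detail than the paper, which dismisses the computation of $\varrho'_W$ and the $A$-linearity of $\vartheta_V$ as ``pedestrian''; your reduction of the latter to square~\eqref{eq:ring-hom} is exactly the intended argument.
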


\begin{proof}
Contemplate the following diagram\,:
$$
\vcenter{\xymatrix@C=4em@R=2em{
& \cat C(G)
 \ar[ld]_-{\Displ \Res}
 \ar[d]^-{\Displ F_{\bfA\!'}}
 \ar[rd]^(.6){\Displ F_{A}}
\\
\cat C(H)
 \ar[r]_-{\cong}^-{E'} \ar@/_2.5em/[rr]_-{\cong}^-{\Displ\Psi}
& \bfA\!'\MModcat{C(G)} \ar[r]_-{\cong}^-{\Theta}
& A\MModcat{C(G)}\,.
}}
$$
Here $E'$ is the Eilenberg-Moore functor associated to the $\Res/\Ind$ adjunction~\eqref{eq:res-ind}. We have seen in Corollary~\ref{cor:para-fin} that $E'$ is an equivalence and we know that $E'\circ \Res=F_{\bfA\!'}$. On the other hand, we have seen in Proposition~\ref{prop:AH}\,(c) that the monad $\bfA\!'$ is isomorphic, as a monad, to $A\otimes-$ via $\vartheta:\bfA\!'\isoto A\otimes-$. This induces an obvious isomorphism on the categories of modules $\Theta:=(\vartheta\inv)^*:\bfA\!'\MModcat{C} \isoto  A\MModcat{C}$
$$
\Theta\big(\,X\,,\,\varrho:\bfA\!'(X)\to X\big):=\big(\,X\,,\,\varrho\circ\vartheta_X\inv:A\otimes X\to X\big)\,.
$$
The natural isomorphism $\vartheta_V:\bfA\!'(V)\isoto A\otimes V$, for $V\in \cat C(G)$, defines a natural isomorphism $\vartheta\,:\ \Theta \circ F_{\bfA\!'}\isoto F_A$ of functors from $\cat C(G)$ to $A\MModcat{C(G)}$, using that $\vartheta$ is an isomorphism of monads (Proposition~\ref{prop:AH}). We now define $\Psi:=\Theta\circ E'$ to get the result. In particular, $\Psi\circ\Res=\Theta\circ E'\circ\Res=\Theta\circ F_{\bfA\!'}\underset{\vartheta}{\isoto} F_A$. The explicit formula for $\Psi=\Theta\circ E'$ given in the statement is immediate from the definition of $E'$ (see~\eqref{eq:E} in Remark~\ref{rem:KE}), the formula for $\eps'_W$ in~\eqref{eq:eta-eps'}, the above formula for $\Theta$ and finally the formula for $\vartheta\inv$ in~\eqref{eq:theta-inv}. The verification is now pedestrian.
\end{proof}

%------------------------------------------------------------------------------
%\goodbreak
\medbreak
\section{Variations on the theme and comments}
\label{se:variations}%
\medbreak
%------------------------------------------------------------------------------

The results of Section~\ref{se:ResGH} are not specific to the category $\kk G\MMod$ and hold for the derived $\Der(\kk G\MMod)$ and the stable $\kk G\SStab$ ones as well. But let us be careful, as the following example should warn us\,: We cannot naively ``derive" monadicity.

\begin{Rem}
\label{rem:warn}%
Consider an adjunction $F:\,\cat A\adj\cat B\,:G$ of abelian categories whose counit $\eps:FG\to \Idcat{B}$ is naturally split and such that the derived adjunction $\LF:\Der(\cat A)\adj \Der(\cat B)\,:\RG$ exists. Then it is not true in general that this derived adjunction has split counit, nor that it is monadic. For example, take $I\subset B$ an ideal of a commutative ring~$B$ such that $\iota_*:\Der(B/I)\to \Der(B)$ is not faithful (\eg\ $B=\bbZ$ and $I=4\bbZ$). On modules, the adjunction $B/I\otimes_B-:\, B\MMod\adj(B/I)\MMod\,:\iota_*$ has split counit (even an isomorphism). However $R\iota_*=\iota_*$ is not faithful, which means the derived adjunction cannot be monadic (the forgetful functor \emph{is} faithful) and in particular the derived counit cannot be split (Lemma~\ref{lem:monadicity}).

The problem is the following. Assume for simplicity, as in the above example, that $G$ is exact, so $\RG=G$ is just $G$ degreewise. For every $W_\sbull\in\Der(\cat B)$ choose an $F$-acyclic complex $P_\sbull$ and a quasi-isomorphism $P_\sbull\oto{s}G(W_\sbull)$ in~$\cat B$. Then the counit of the derived adjunction at~$W_\sbull$ is given by the composite $\eps\circ F(s)$\,:
$$
\xymatrix@C=4em{
\LF\circ \RG\,(W_\sbull)=\LF(G\,(W_\sbull))=\kern-4.4em
& F(P_\sbull)\ar[r]^-{\Displ F(s)}
& FG(W_\sbull)\ar[r]^-{\Displ \eps}  \ar@{-->}@/^1.5em/[l]_-{??}
& W_\sbull \ar@/^1.5em/[l]_-{\exists}
}
$$
where the last morphism is the original counit~$\eps$ applied degreewise. So, we see that if the original counit $\eps$ has a natural section, then we can use this section degreewise to split the last morphism above but we cannot split~$F(s)$ in general.

In our case, it is therefore essential that we do not need to left-derive $\Res^G_H$ but can simply use it degreewise on complexes. In that case, the above $F(s)$ is an isomorphism (the identity) and the derived counit is as split as the original one. Actually, all functors $\Res$, $\Ind$, $\CoInd$ (and $\otimes_\kk$ when $\kk$ is a field) are exact here.
\end{Rem}

The above counter-example explains the importance of the following easy result\,:

\begin{Lem}
\label{lem:der}%
Let $F:\,\cat A\adj\cat B\,:G$ be an adjunction of \emph{exact} functors between abelian categories (resp.\ Frobenius abelian categories), whose counit has a natural section. Then the induced adjunction $F:\Der(\cat A)\adj\Der(\cat B):G$ on derived (resp.\ $F:\underline{\cat A}\adj\underline{\cat B}:G$ on stable) categories also has a naturally split counit. Dually, if the unit has a natural retraction then so does the derived (resp.\ stable) unit.
\end{Lem}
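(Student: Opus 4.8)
The plan is to exploit that, since $F$ and $G$ are exact, neither needs to be derived: both preserve acyclic complexes, hence quasi-isomorphisms, and are therefore computed by applying them degreewise to complexes. As Remark~\ref{rem:warn} explains, this is exactly what rescues the situation --- the troublesome comparison map coming from an $F$-acyclic resolution is now the identity, because when $F$ is exact every complex is already $F$-acyclic (one takes the resolution to be $G(W_\sbull)$ itself, with $s=\id$). Thus the whole lemma reduces to transporting the section $\xi$ degreewise and checking that nothing is lost upon localizing.

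For the derived case, exactness of $F$ and $G$ guarantees that degreewise application induces functors $F:\Der(\cat A)\to\Der(\cat B)$ and $G:\Der(\cat B)\to\Der(\cat A)$. The unit $\eta$, the counit $\eps$ and the given section $\xi:\Idcat{B}\to FG$ are natural transformations of exact functors; applied degreewise they yield natural transformations at the level of complexes, and these descend to the derived categories by the universal property of localization, since $F$ and $G$ invert quasi-isomorphisms. The triangle identities and the relation $\eps\circ\xi=\id$ hold in $\cat B$, hence degreewise on complexes, hence in $\Der(\cat B)$; so the induced adjunction is genuinely an adjunction and $\xi$ is the required natural section of its counit.

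For the stable case, with $\cat A$ and $\cat B$ Frobenius, the one genuinely new point is that $F$ and $G$ must descend to the stable categories, \ie preserve projective-injective objects. Here I would invoke the standard adjoint-functor facts: since $G$ is exact and $F$ is its left adjoint, $F$ preserves projectives; since $F$ is exact and $G$ is its right adjoint, $G$ preserves injectives. In a Frobenius category projectives and injectives coincide, so both $F$ and $G$ preserve the class of projective-injectives and therefore induce additive functors on $\underline{\cat A}$ and $\underline{\cat B}$. From here the argument is identical to the derived case: $\eta$, $\eps$ and $\xi$ descend to the stable quotients, and the triangle identities together with $\eps\circ\xi=\id$ pass to $\underline{\cat B}$, yielding the naturally split stable counit.

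The dual statement follows formally by passing to opposite categories: $\cat A\op$ and $\cat B\op$ are again abelian (resp.\ Frobenius, since projectives and injectives are interchanged but coincide), the adjunction reverses so that $G\op$ becomes left adjoint to $F\op$, and under this duality the original unit $\eta$ becomes the counit of the opposite adjunction while its retraction becomes a section of that counit. Applying the already-established half of the lemma to the opposite adjunction and translating back gives the desired natural retraction of the derived (resp.\ stable) unit. The only step meriting real attention is thus the preservation of projective-injectives in the Frobenius case; everything else is formal degreewise transport through the localization, which is precisely why the obstruction of Remark~\ref{rem:warn} never materializes here.
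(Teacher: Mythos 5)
Your proof is correct and takes essentially the same route as the paper's: exactness lets $F$ and $G$ (hence also $\eta$, $\eps$ and the section $\xi$) be applied degreewise to complexes without any deriving, and in the Frobenius case the adjoint-functor argument (left adjoint of an exact functor preserves projectives, right adjoint preserves injectives, and the two classes coincide) shows both functors descend to the stable quotients. The paper's proof is simply a terser sketch of this same argument, with your treatment of the dual statement via opposite categories being the formal version of what the paper leaves implicit.
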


\begin{proof}
The derived functors $F$ and $G$ are simply defined as $F$ and $G$ degreewise and so are the unit and counit of the derived adjunction. Therefore, we can define the section of the counit degreewise as well. The case of the stable categories is even simpler once we observe that $F$ (resp.\ $G$), as left (resp.\ right) adjoint of an exact functor, will preserve projective (resp.\ injective) objects.
\end{proof}

We therefore obtain the derived and stable analogues of Theorem~\ref{thm:paradigm}\,:

\begin{Thm}
\label{thm:para-der}%
Let $H\leq G$ be an arbitrary subgroup. Let us change notation and set $\cat C(G):=\Der(\kk G\MMod)$ the derived category of $\kk G$-modules. Then, the statement of Theorem~\ref{thm:paradigm} holds verbatim,
\ie restriction to~$H$ becomes extension-of-scalars with respect to the monad~$\bfA=\CoInd\Res$ on $\cat C(G)$. The same result holds if $\cat C(G)$ stands for $\kk G\SStab$ when $G$ is finite and $\kk$ is a field.
\end{Thm}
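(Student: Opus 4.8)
The plan is to reduce Theorem~\ref{thm:para-der} to the already-established Theorem~\ref{thm:paradigm} by means of the degreewise-exactness machinery assembled in Remark~\ref{rem:warn} and Lemma~\ref{lem:der}. The essential point, emphasized in Remark~\ref{rem:warn}, is that we must never \emph{left-derive} the restriction functor: both $\Res^G_H$ and its right adjoint $\CoInd_H^G = \Hom_{\kk H}(\kk G, -)$ are exact functors on $\kk G\MMod$ and $\kk H\MMod$, so they descend verbatim to the derived and stable categories by applying them degreewise to complexes, without any cofibrant/fibrant replacement. Consequently the derived adjunction $\Res \colon \cat C(G) \adj \cat C(H) \colon \CoInd$ is genuinely the degreewise extension of the module-level adjunction~\eqref{eq:res-coind}.

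First I would invoke the proof of Theorem~\ref{thm:paradigm}: the $(\kk H,\kk H)$-bimodule retraction $m \colon \kk G \to \kk G$ of the inclusion $\ell \colon \kk H \hook \kk G$, defined by $m(g) = g$ for $g \in H$ and $m(g) = 0$ for $g \notin H$, produces via Theorem~\ref{thm:ancestor}(a) a natural section $\xi = m^*$ of the counit $\eps \colon \Res\,\CoInd \to \Idcat{C(H)}$ at the level of ordinary modules. Since $\eps$ and $\xi$ are themselves natural transformations between exact functors, applied degreewise they furnish a natural section of the counit of the \emph{derived} (resp.\ \emph{stable}) restriction-coinduction adjunction; this is precisely the content of Lemma~\ref{lem:der}, whose hypothesis (an adjunction of exact functors between abelian, resp.\ Frobenius abelian, categories with naturally split counit) is exactly what we have just verified.

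Next I would feed this split counit into Lemma~\ref{lem:monadicity}. Both $\cat C(G)$ and $\cat C(H)$ are idempotent-complete in all three cases of Notation~\ref{not:123} (derived and stable categories of module categories are karoubian), so part~(c) of the lemma applies and yields that the Eilenberg-Moore functor $E \colon \cat C(H) \isoto \bfA\MModcat{C(G)}$ is an equivalence, with $E\circ\Res = F_\bfA$ and $U_\bfA\circ E = \CoInd$. That is verbatim the assertion of Theorem~\ref{thm:paradigm}, now read in the derived or stable setting, as claimed. The stable case for $G$ finite over a field merely requires recalling that $\kk G\SStab$ is the stable category of the Frobenius category $\kk G\MMod$ and that $\Res$, $\CoInd$ preserve projective-injectives (being exact adjoints of exact functors), so the degreewise construction is legitimate there too.

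The step I expect to require the most care is confirming that the section survives the passage to the stable category, rather than the (formal) derived case. In the derived setting the section is literally the degreewise map and nothing can go wrong once exactness is granted. In the stable case one must check that $\xi$, being a map of exact functors preserving the projective-injective objects that are killed in $\underline{\cat A}$, induces a well-defined natural transformation on the stable quotient and remains a section there; this is guaranteed by the final observation in the proof of Lemma~\ref{lem:der} that exact left (resp.\ right) adjoints preserve projectives (resp.\ injectives), but it is the one point where the Frobenius structure, and not mere abelianness, is genuinely used.
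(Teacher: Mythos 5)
Your proposal is correct and follows essentially the same route as the paper's own proof: obtain the naturally split counit at the module level from the bimodule retraction of Theorem~\ref{thm:paradigm}, transport the splitting degreewise to the derived and stable adjunctions via Lemma~\ref{lem:der}, and conclude monadicity from Lemma~\ref{lem:monadicity}\,(c) using idempotent-completeness. (One typo: your retraction $m$ should have codomain $\kk H$, not $\kk G$.)
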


\begin{proof}
Same proof as Theorem~\ref{thm:paradigm}\,: The counit of the derived or stable adjunctions remains naturally split (Lemma~\ref{lem:der}) and we can then apply Lemma~\ref{lem:monadicity}.
\end{proof}

Similarly, when $\Index{G}{H}<\infty$ and $\kk$ is a field, the ring object $A=A^G_H$ of Definition~\ref{def:AH} will exist in every new tensor category which receives $\kk G\MMod$. Hence we obtain analogues of Theorem~\ref{thm:fin-ind}, like for instance\,:
\begin{Thm}
\label{thm:f-i-der}%
Let $H\leq G$ be a finite-index subgroup, $\kk$ a field and $\cat C(G)=\Der(\kk G\MMod)$. Consider $A=A^G_H$ in~$\cat C(G)$, as a complex concentrated in degree zero. There exists an equivalence of categories $\Psi:\cat C(H)\isotoo A\MModcat{C(G)}$ making the following diagram commute up to
isomorphism\,:
$$
\xymatrix@C=1em@R=2em@C=2em{
& \cat C(G) \ar[ld]_(.6){\Displ \Res} \ar[rd]^(.6){\Displ F_{A}}
\\
\cat C(H) \ar[rr]_-{\cong}^-{\Displ\Psi}
&& A\MModcat{C(G)}\,.
}
$$
The same result holds if $\cat C(G)$ stands for $\kk G\SStab$, when $G$ is finite.
\end{Thm}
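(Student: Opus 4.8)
The plan is to replay the proof of Theorem~\ref{thm:fin-ind} verbatim, replacing each of its two ingredients by its derived (resp.\ stable) counterpart and then composing. Recall that in the plain case we factored the equivalence as $\Psi=\Theta\circ E'$, where $E':\cat C(H)\isoto\bfA\!'\MModcat{C(G)}$ is the Eilenberg--Moore equivalence for the $\Res/\Ind$ adjunction (Corollary~\ref{cor:para-fin}) and $\Theta:\bfA\!'\MModcat{C(G)}\isoto A\MModcat{C(G)}$ is induced by the isomorphism of monads $\vartheta:\bfA\!'\isoto A\otimes-$ of Proposition~\ref{prop:AH}\,(c). So I would establish the derived/stable versions of these two facts separately.

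For the first ingredient, I would produce the derived/stable analogue of Corollary~\ref{cor:para-fin}, \ie monadicity of the $\Res/\Ind$ adjunction on $\cat C(G)=\Der(\kk G\MMod)$ (resp.\ $\kk G\SStab$). The counit $\eps'$ of the plain $\Res/\Ind$ adjunction admits the natural section $\xi'_W(w)=1\otimes w$ written down in the proof of Corollary~\ref{cor:para-fin}. Since $\Res$ and $\Ind$ are exact, Lemma~\ref{lem:der} transports this section degreewise to the derived, resp.\ stable, adjunction, whose counit therefore remains naturally split. Lemma~\ref{lem:monadicity}\,(c), together with idempotent-completeness of the derived and stable categories, then yields an equivalence $E':\cat C(H)\isoto\bfA\!'\MModcat{C(G)}$ with $E'\circ\Res=F_{\bfA\!'}$, exactly as before. (Alternatively, one could feed the $\Res/\CoInd$ adjunction into Theorem~\ref{thm:para-der} and invoke $\Ind\cong\CoInd$ for finite index.)

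For the second ingredient, I would check that $\vartheta:\bfA\!'\isoto A\otimes-$ remains an isomorphism of monads in this setting. Since $\kk$ is a field and $A=A^G_H$ is finite-dimensional, $A\otimes_\kk-$ is exact; moreover $A\otimes-\cong\bfA\!'=\Ind\Res$ descends degreewise (its factors $\Ind$ and $\Res$ do, by the argument of Lemma~\ref{lem:der}), so the ring object $A$, placed in degree zero, defines a separable monad $A\otimes-$ on $\cat C(G)$. The natural transformation $\vartheta_V$ of~\eqref{eq:theta} and its compatibilities with $\mu'$ and $\eta'$ are assembled from exact functors, hence persist degreewise, so $\vartheta$ is again an isomorphism of monads. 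It therefore induces $\Theta:=(\vartheta\inv)^*:\bfA\!'\MModcat{C(G)}\isoto A\MModcat{C(G)}$, and I set $\Psi:=\Theta\circ E'$. Then $\Psi\circ\Res=\Theta\circ E'\circ\Res=\Theta\circ F_{\bfA\!'}\underset{\vartheta}{\isoto}F_A$, and the explicit description of $\Psi$ is read off precisely as in Theorem~\ref{thm:fin-ind}.

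The one genuine subtlety, and the point I expect to be the main obstacle, is exactly the warning of Remark~\ref{rem:warn}: monadicity does \emph{not} pass to derived categories naively, since a naturally split counit of an adjunction of abelian categories need not stay split once the functors must be derived. What rescues the argument is that none of $\Res$, $\Ind$, $\CoInd$ or $A\otimes_\kk-$ needs to be derived: all are exact (and, for the stable case, preserve projective-injectives, being exact adjoints over the Frobenius category $\kk G\MMod$), so they act degreewise and Lemma~\ref{lem:der} applies verbatim. Thus the only thing demanding real care is to confirm exactness—and projective-injective preservation in the stable case—of every functor involved; granting that, the plain proof transfers without change.
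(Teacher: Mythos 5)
Your proposal is correct and takes essentially the same route as the paper: the paper's own proof just states that the argument of Theorem~\ref{thm:fin-ind} holds verbatim for the new $\cat C(G)$, because all functors in sight are exact and every plain isomorphism can therefore be applied degreewise. Your more detailed unpacking---transporting the split counit via Lemma~\ref{lem:der}, checking that $\vartheta$ remains an isomorphism of monads degreewise, and observing that the warning of Remark~\ref{rem:warn} is harmless precisely because none of $\Res$, $\Ind$, $\CoInd$, $A\otimes_\kk-$ needs to be derived---is exactly the content the paper leaves implicit.
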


\begin{proof}
The proof of Theorem~\ref{thm:fin-ind} holds verbatim with the new $\cat C(G)$. Again, all functors being exact, we can apply all (plain) isomorphisms in sight degreewise to obtain the necessary isomorphisms at the derived level.
\end{proof}

\begin{Rem}
The same results hold for finitely generated modules and bounded complexes as well, when they make sense, \ie when the restriction-(co)induction adjunction preserves those categories. In particular, for $G$ finite, we have
$$\kk H\sstab\cong A\MMod_{\kk G\sstab}.$$
\end{Rem}

\begin{Rem}
\label{rem:Conj}%
Let $G$ be a finite group and $X$ a finite left $G$-set. Consider the ring object $A_X:=\kk X$ in $\kk G\mmod$ with all $x\in X$ being orthogonal idempotents ($x\cdot x=x$ and $x\cdot x'=0$ for $x\neq x'$). This ring object is isomorphic to the sum of our $A^G_{H_i}$ (Definition~\ref{def:AH}) for the decomposition of $X\simeq G/H_1\sqcup\cdots\sqcup G/H_n$ in $G$-orbits. These commutative separable ring objects $A_X$ migrate to any $\otimes$-category receiving $\kk G\mmod$, like $\kk G\sstab$. Beyond these, every finite separable field extension~$\kk'/\kk$, with trivial $G$-action, would also define such commutative separable ring objects.

So, assume for simplicity that $\kk$ is a separably closed field. Up to isomorphism, the only commutative separable ring objects $A$ in the plain category $\kk G\mmod$ are the above~$A_X$. This remark was first made with Serge Bouc and Jacques Th\'evenaz. Indeed, since $\kk$ is separably closed, the underlying $\kk$-algebra of $A$ is isomorphic to $\kk\times\cdots \times\kk=\kk X$ for a set~$X$ of orthogonal primitive idempotents. But $G$ acts on $A$ by ring automorphisms, hence it permutes  idempotents, \ie $X$ inherits a $G$-action and therefore $A\simeq A_X$. It is tempting to ask whether the same holds in $\kk G\sstab$\,:
\end{Rem}

\begin{Que}
Let $\kk$ be separably closed and $A\in\kk G\sstab$ be a commutative separable ring object. Is there a finite $G$-set $X$ such that $A\simeq \kk X$ in $\kk G\sstab$?
\end{Que}

This problem is important for it asks whether the ``\'etale topology" which appears in modular representation theory is richer than what is produced by subgroups.

\begin{center}*\ *\ *\end{center}

We now give another example of a restriction functor which also satisfies monadicity. For this, let $\kk$ be a field of characteristic~$p>0$ and $G$ an elementary abelian $p$-group, \ie a product $G\simeq C_p\times\cdots \times C_p$ of copies of the cyclic group $C_p$ of order~$p$. A \emph{cyclic shifted subgroup} of $G$ is a ring homomorphism $\ell:\kk C_p\to \kk G$ such that $\kk G$ is flat (\ie free) as $\kk C_p$-module via~$\ell$. Cyclic shifted subgroups originate in Carlson's work on rank varieties; see~\cite{Carlson83}. Consider the stable category $\cat C(G)=\kk G\SStab$ and the adjunction $\Res_\ell:\cat C(G)\adj\cat C(C_p):\CoInd_\ell$ as in~\eqref{eq:adj}. As in Section~\ref{se:monadicity}, this adjunction induces a monad $\bfA_\ell=\CoInd_\ell\circ\Res_\ell$ on~$\cat C(G)$.

\begin{Thm}
\label{thm:scs}%
Let $\ell:\kk C_p\to \kk G$ be a cyclic shifted subgroup as above. Then the Eilenberg-Moore functor $E:\cat C(C_p)\too \bfA_\ell\MModcat{C(G)}$ is an equivalence such that $\ell^*=\Res_\ell$ coincides with the extension-of-scalars $F_{\bfA_\ell}:\cat C(G)\to \bfA_\ell\MModcat{C(G)}$.
\end{Thm}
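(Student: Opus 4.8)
The plan is to follow the same two-stage strategy used for Theorems~\ref{thm:paradigm} and~\ref{thm:para-der}. The restriction-coinduction adjunction $\Res_\ell:\kk G\MMod\adj\kk C_p\MMod:\CoInd_\ell$ attached to the ring homomorphism $\ell:\kk C_p\to\kk G$ is exactly of the form treated in Theorem~\ref{thm:ancestor}, so the entire problem reduces to producing a $(\kk C_p,\kk C_p)$-bimodule retraction $m:\kk G\to\kk C_p$ of~$\ell$, \ie a bimodule map with $m\circ\ell=\id$. Granting such an~$m$, Theorem~\ref{thm:ancestor} immediately yields that this plain-module adjunction is monadic and, crucially for us, that its counit $\eps_\ell:\Res_\ell\CoInd_\ell\to\Id$ admits a natural section. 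I would then transport this split counit to the stable category by Lemma~\ref{lem:der} and conclude by Lemma~\ref{lem:monadicity}(c), verbatim as in the proof of Theorem~\ref{thm:para-der}.

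The heart of the matter, and the step where the argument genuinely departs from the subgroup case (in which $m$ was the naive projection onto~$H$), is the construction of this retraction. Since $G$ is abelian, both $\kk G$ and $\kk C_p$ are commutative, so the left and right $\kk C_p$-actions on any bimodule in sight coincide; hence a $(\kk C_p,\kk C_p)$-bimodule retraction is the same thing as a $\kk C_p$-linear one. Next, $\ell$ is injective: because $\kk G$ is free over $\kk C_p$ via~$\ell$ it is faithful, forcing $\ker\ell=0$. Thus $\ell:\kk C_p\hook\kk G$ is a monomorphism of $\kk C_p$-modules whose source is the regular module. As $\kk C_p$ is a Frobenius, hence self-injective, algebra, its regular module is injective, so every monomorphism out of it splits. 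This produces the desired $\kk C_p$-linear retraction~$m$, which by the commutativity above is automatically $(\kk C_p,\kk C_p)$-bilinear. I expect this self-injectivity observation to be the only genuinely new ingredient.

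It remains to verify the hypotheses needed to pass from plain modules to the stable category. The functor $\Res_\ell$ is exact, and $\CoInd_\ell=\Hom_{\kk C_p}(\kk G,-)$ is exact precisely because $\kk G$ is projective --- being free --- as a $\kk C_p$-module; this is exactly where the defining flatness condition of a cyclic shifted subgroup enters. Both $\kk G\MMod$ and $\kk C_p\MMod$ are Frobenius abelian categories, so Lemma~\ref{lem:der} applies and shows that the induced stable adjunction $\Res_\ell:\cat C(G)\adj\cat C(C_p):\CoInd_\ell$ still has a naturally split counit. Finally, the stable categories are idempotent-complete (as already invoked in Theorem~\ref{thm:para-der}), so Lemma~\ref{lem:monadicity}(c) gives that the Eilenberg-Moore functor $E:\cat C(C_p)\to\bfA_\ell\MModcat{C(G)}$ is an equivalence under which $\Res_\ell$ is identified with the extension-of-scalars $F_{\bfA_\ell}$, as required.
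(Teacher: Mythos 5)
Your proposal is correct, and its overall architecture is exactly the paper's: reduce via Theorem~\ref{thm:ancestor} to producing a $(\kk C_p,\kk C_p)$-bimodule retraction of $\ell$, observe that commutativity of both group algebras makes one-sided $\kk C_p$-linearity suffice, transport the resulting split counit to stable categories by Lemma~\ref{lem:der}, and conclude with Lemma~\ref{lem:monadicity}. The one genuine difference is how the retraction $m$ is produced. The paper argues elementarily with Nakayama's lemma: a $\kk$-basis of $\kk G/\mathrm{Rad}(\kk C_p)\kk G$ beginning with the class of $1$ lifts to a $\kk C_p$-basis of the free module $\kk G$ beginning with $1=\ell(1)$, and $m$ is the $\kk C_p$-linear projection onto that basis element; this simultaneously gives $m\circ\ell=\id$ (and, incidentally, injectivity of $\ell$). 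You instead first prove $\ell$ is injective (freeness of positive rank forces faithfulness, hence $\ker\ell=0$) and then invoke self-injectivity of the Frobenius algebra $\kk C_p$ to split the monomorphism $\ell$ out of the regular module. Both arguments are valid; yours trades the explicit basis construction for a standard structural fact about group algebras, and it has the merit of isolating exactly where the defining flatness hypothesis enters (faithfulness of $\kk G$ over $\kk C_p$, and exactness of $\CoInd_\ell=\Hom_{\kk C_p}(\kk G,-)$, which is needed for Lemma~\ref{lem:der} to apply and which the paper leaves implicit). The paper's Nakayama argument is marginally more self-contained, requiring nothing about injective modules, and produces $m$ explicitly.
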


\begin{proof}
By Theorem~\ref{thm:ancestor}\,(a), we simply need to prove that $\ell:\kk C_p\to \kk G$ has a retraction as $\kk C_p$-bimodule since the induced section of the counit passes from categories of modules to stable categories, where we can then apply Lemma~\ref{lem:monadicity}. Now, both rings are commutative, so it is enough to show that $\ell$ has a section as $\kk C_p$-module. By Nakayama, the $\kk$-vector space $\kk G/\textrm{Rad}(\kk C_p)$ admits a $\kk$-basis starting with the class of~1 (otherwise $\kk G=0$) and a lift of that basis in the free $\kk C_p$-module $\kk G$ gives a $\kk C_p$-basis starting with $1=\ell(1)$. Define $m:\kk G\to \kk C_p$ by $\kk C_p$-linear projection onto 1 and we have $m\circ \ell=\id$ as wanted.
\end{proof}

\begin{Rem}
By the above proof, the counit of the monad $\bfA_\ell$ on $\kk G\SStab$ is separable. So, $\kk C_p\SStab$ can be obtained out of $\kk G\SStab$ and $\bfA_\ell$ by the purely triangulated-categorical method of~\cite{Balmer11}. Now, the very simple category $\kk C_p\SStab$ might be conceptually understood as a \emph{field} in tensor triangular geometry although it is not yet clear what tensor-triangular fields exactly are, nor how to construct them in general. See~\cite[\S\,4.3]{BalmerICM}. Also, the monad $\bfA_\ell$ on $\kk G\SStab$ does not come from a ring object in $\kk G\SStab$ (unless $\ell$ is a plain subgroup). So, even in the study of finite groups, one should not discard monads in favor of ring objects.
\end{Rem}

\begin{Rem}
This result about cyclic shifted subgroups of elementary abelian groups probably extends to $\pi$-points of finite group schemes \`a la Friedlander-Pevtsova~\cite{FriedlanderPevtsova07}, at the cost of additional technicalities left to the interested readers.
\end{Rem}

\begin{center}*\ *\ *\end{center}

\begin{Rem}
\label{rem:desc}%
Mostly as a motivation for Part~\ref{part:B}, we discuss the attempt to use Theorem~\ref{thm:main} to unfold descent in its simplest form; see Knus-Ojanguren~\cite{KnusOjanguren74} or~\cite[\S\,3]{Balmer12}. Let $H\leq G$ be a finite-index subgroup and $A=A^G_H=\kk(G/H)$ the associated ring object in~$\kk G\MMod$ as in Definition~\ref{def:AH}. \textit{Suppose that the index $\Index{G}{H}$ is invertible in~$\kk$. Then $A$ satisfies descent in~$\kk G\MMod$. Similarly, if $\kk$ is a field, $A$ satisfies descent in $\Der(\kk G\MMod)$ and $\Db(\kk G\MMod)$. When moreover $G$ is finite, $A$ satisfies descent in $\kk G\mmod$, $\kk G\SStab$, $\kk G\sstab$, $\Db(\kk G\mmod)$.}

Indeed, define $\zeta:A=\kk(G/H)\to \kk$ by $\zeta(\gamma)=\Index GH\inv$ for every $\gamma\in G/H$. This $\zeta$ is a retraction of the unit $\eta:\unit\to A$. Consequently, we can apply~\cite[Cor.\,2.6]{Mesablishvili06a}, or the dual of Lemma~\ref{lem:monadicity} to get descent, \ie \emph{co}monadicity. In the triangulated cases, one can also use~\cite[\S\,3]{Balmer12}. This proves the above claim.

Conversely, for descent to hold, the index $\Index{G}{H}$ must be invertible in~$\kk$, at least for triangulated categories, like $\Db(\kk G\mmod)$ and beyond. Indeed, it is necessary that $A\otimes-$ be faithful. Now choose a distinguished triangle $\cdot \oto{\chi} \unit\overset{\eta}\to A\to \cdot$ featuring~$\eta$. Since $\eta$ is retracted after applying $A\otimes-$ (by~$\mu:A\otimes A\to A$), the morphism $\chi$ satisfies $A\otimes \chi=0$ hence is zero, by faithfulness of~$A\otimes-$. In a triangulated category, this forces $\eta$ to be retracted. A $\kk G$-linear retraction $\zeta:A=\kk(G/H)\to \kk=\unit$ must be given by $\gamma\mapsto u$ for all $\gamma\in G/H$, for some fixed $u\in \kk$. Since $\eta(1)=\sum\gamma$, we have $1=\zeta\circ\eta(1)=u\cdot|G/H|$. Hence $|G/H|\in\kk^\times$.

Descent for a commutative ring object $A$ in a tensor category~$\cat C$ asserts that $\cat C$ is equivalent to the descent category $\Desc_{\cat C}(A)$. The latter is described in terms of $A$-modules $X$ equipped with a \emph{gluing isomorphism} $\gamma:A\otimes X\isoto X\otimes A$ as $A\potimes 2$-modules satisfying the \emph{cocycle condition} ``$\gamma_2=\gamma_3\circ\gamma_1$" in the category of $A\potimes 3$-modules. Here $\gamma_i$ means ``$\gamma$ tensored with $\id_A$ in position~$i$". See~\cite[\S\,3]{Balmer12} for details. In the case of the category $\cat C=\cat C(G)$ depending on a group~$G$ (in any sense used above) and of the ring object $A=A^G_H$, we know by Theorem~\ref{thm:main} that the category of $A$-modules in~$\cat C(G)$ is equivalent to the category $\cat C(H)$. Using the Mackey formula, one can decompose $A\potimes 2$ as a sum of $A_{H^g\cap H}$ for $g$ in a chosen set of representatives of $\HGH$. Hence, by Theorem~\ref{thm:main} again, the category of $A\potimes 2$-modules in~$\cat C(G)$ is equivalent to the coproduct of the corresponding categories $\cat C(H^g\cap H)$. As usual, this suffers from the choice of representatives for $\HGH$. A similar, even less natural description can be made for $A\potimes 3$-modules in~$\cat C(G)$ in terms of categories of the form $\cat C(H^{g_2}\cap H^{g_1}\cap H)$ by a third application of our Theorem~\ref{thm:main} together with a double layer of Mackey formulas and more choices. However, the Mackey formulas become really messy when dealing with three factors and most annoyingly the (three) extensions from $A\potimes{2}$-modules to $A\potimes{3}$-modules cannot be all controlled by \emph{one} such set of choices. The reader without experience with those issues is invited to try for himself\,!

These technicalities require a more efficient formalism, as in Part~\ref{part:B} below.
\end{Rem}

%------------------------------------------------------------------------------
%------------------------------------------------------------------------------
\goodbreak
\part{Stacks of representations}
\label{part:B}%
\bigbreak
%------------------------------------------------------------------------------
Form now on, $G$ is assumed to be a finite group.
%------------------------------------------------------------------------------
\medbreak
\section{A Grothendieck topology on finite $G$-sets}
\label{se:top}%
\medbreak
%------------------------------------------------------------------------------

For Grothendieck topologies, we follow Mac\,Lane-Moerdijk~\cite[Chap.\,III]{MacLaneMoerdijk94}. See also SGA\,4\cite{SGA4}, Kashiwara-Schapira~\cite[Chap.\,16]{KashiwaraSchapira06} or Vistoli~\cite{Vistoli05}.

\begin{Not}
Let $\Gsets$ be the category of finite left $G$-sets, with $G$-equivariant maps (``$G$-maps" for short).
\end{Not}

\begin{Rem}
\label{rem:lim}%
The category $\Gsets$ has finite limits, in particular pull-backs
$$
\kern10em\xymatrix@R=1.5em{
\kern-16em\SET{(x,y)\in  X\times Y\,}{\,\alpha(x)=\beta(y)}=X\times_Z Y
 \ar[r]^-{\pr_2} \ar[d]_-{\pr_1}
& Y \ar[d]^{\beta}
\\
X \ar[r]^-{\alpha} & Z}
$$
where $G$ acts diagonally on $X\times Y$ and $X\times_ZY$. If $X\simeq \sqcup_{i=1}^nX_i$ in $G$-sets (for instance, if $X_1,\ldots,X_n$ are the $G$-orbits of~$X$) then $X\times_ZY\simeq\sqcup_{i=1}^n(X_i\times_ZY)$.
\end{Rem}

\begin{Not}
As usual, we denote by $\con{g}h=g hg\inv$ and $h^g=g\inv h\,g$ the conjugates of $h\in G$ by $g\in G$ and similarly for $\con{g}H$ and $H^g$ for a subgroup $H\leq G$.
\end{Not}

We shall need a couple of Mackey formulas, in the following generality\,:
\begin{Prop}[Mackey formula]
\label{prop:Mackey}%
Let $K_1,K_2\leq H\leq G$ be subgroups. Let $S\subset H$ be a set of representatives of $\doublequot{K_1}{H}{K_2}$, meaning that the composite $S\hook H\onto \doublequot{K_1}{H}{K_2}$ is bijective. Then we have a bijection of $G$-sets
\begin{equation}
\label{eq:Mackey}%
\begin{aligned}
\coprod_{\Displ t\in S} G/(K_1^t\cap K_2) & \isotoo (G/K_1)\times_{G/H} (G/K_2)
\\
[z]_{K_1^t\cap K_2} & \longmapsto \big(\,[zt\inv]_{K_1}\,,\,[z]_{K_2}\big)
\end{aligned}
\end{equation}
where the notation $[-]$ indicates classes in the relevant cosets (as in Part~\ref{part:A}).
\end{Prop}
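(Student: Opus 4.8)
The plan is to verify directly that the map given by the formula in~\eqref{eq:Mackey} is a well-defined $G$-map and then exhibit its inverse, thereby proving it is a bijection of $G$-sets. First I would check that the assignment $[z]_{K_1^t\cap K_2}\mapsto\big([zt\inv]_{K_1},[z]_{K_2}\big)$ is well defined and lands in the pull-back. Well-definedness means that replacing $z$ by $z\,k$ for $k\in K_1^t\cap K_2$ does not change the image: since $k\in K_2$ the second coordinate $[zk]_{K_2}=[z]_{K_2}$ is unchanged, and since $k\in K_1^t=t\inv K_1 t$ we have $t k t\inv\in K_1$, so $[zkt\inv]_{K_1}=[zt\inv\cdot tkt\inv]_{K_1}=[zt\inv]_{K_1}$. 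To see the image lies in $(G/K_1)\times_{G/H}(G/K_2)$, I must check the two coordinates agree after projecting to $G/H$; this holds because $t\inv\in H$, so $[zt\inv]_H=[z]_H$. Finally, $G$-equivariance is immediate since left multiplication by $g\in G$ acts on the coset class $z$ in each coordinate and commutes with the right translations defining the formula.

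Next I would construct the inverse map on each orbit of the pull-back. Given a pair $\big([a]_{K_1},[b]_{K_2}\big)$ with $[a]_H=[b]_H$, the condition means $a\inv b\in H$ (after adjusting representatives), so the element $b\inv a$, or rather the relevant double coset, determines a unique $t\in S$ with $K_1\,(a\inv b)\,K_2=K_1\,t\inv\,K_2$ inside $\doublequot{K_1}{H}{K_2}$ — here the bijectivity of $S\hook H\onto\doublequot{K_1}{H}{K_2}$ is exactly what pins down $t$. I would then produce the preimage class in $\coprod_{t\in S}G/(K_1^t\cap K_2)$ and check the two composites are mutually inverse. The bookkeeping with which coset carries the factor $t\inv$ is the fussy part, so I would fix conventions early.

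The cleanest route, and the one I would actually favor to avoid index-chasing, is to recognize both sides as orbit-counting data. The $G$-set $(G/K_1)\times_{G/H}(G/K_2)$ has $G$-orbits in bijection with the $G$-orbits on $K_1$-and-$K_2$-labelled fibers over $G/H$; by transitivity of $G$ on $G/H$ one reduces to analyzing the $H$-action on the fiber over the base point $[1]_H$, which is $(K_1\bsl H)\times(H/K_2)$ — equivalently, counting $H$-orbits here is counting $\doublequot{K_1}{H}{K_2}$. Each double coset $K_1 t K_2$ contributes a single $G$-orbit whose point-stabilizer is conjugate to $K_1^t\cap K_2$, giving the orbit $G/(K_1^t\cap K_2)$. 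This identifies the orbit decomposition of the right-hand side with the left-hand side term by term.

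The main obstacle I anticipate is purely notational rather than conceptual: keeping the left/right conventions consistent so that the stabilizer comes out as $K_1^t\cap K_2$ with the precise twist $t$ (and the factor $t\inv$ in the first coordinate) rather than some conjugate variant. Once the convention for representatives $S$ of $\doublequot{K_1}{H}{K_2}$ is fixed, the verifications of well-definedness, equivariance, and bijectivity are all routine, so I would present the explicit map of~\eqref{eq:Mackey}, confirm it is a $G$-map landing in the pull-back, and then either write down the inverse explicitly or invoke the orbit-stabilizer count above to conclude it is a bijection.
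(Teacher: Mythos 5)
Your overall strategy coincides with the paper's own proof: the paper maps the pull-back onto the double cosets via $([z_1]_{K_1},[z_2]_{K_2})\mapsto {}_{K_1}[z_1^{-1}z_2]_{K_2}$ and identifies the fiber over each ${}_{K_1}[t]_{K_2}$ with $G/(K_1^t\cap K_2)$ by exactly the map in~\eqref{eq:Mackey}. Your verifications of well-definedness, of landing in the pull-back, and of $G$-equivariance are correct, and your orbit--stabilizer variant is the same argument in different clothing (note that the stabilizer of the point $([t^{-1}]_{K_1},[1]_{K_2})$ is \emph{equal} to $K_1^t\cap K_2$, not merely conjugate to it, which is what you need to conclude that the explicit map of~\eqref{eq:Mackey} is injective on each orbit).

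There is, however, a concrete error in your inverse construction, and as written that step fails. If $([a]_{K_1},[b]_{K_2})$ is the image of $[z]_{K_1^t\cap K_2}$, then $a=zt^{-1}k_1$ and $b=zk_2$ with $k_i\in K_i$, so $a^{-1}b=k_1^{-1}tk_2$: the double coset attached to the pair is $K_1\,t\,K_2$, \emph{not} $K_1\,t^{-1}K_2$. This is not a harmless choice of convention: inversion does not induce a well-defined map on $\doublequot{K_1}{H}{K_2}$ (it carries it to $\doublequot{K_2}{H}{K_1}$), so $t\mapsto K_1t^{-1}K_2$ need not be a bijection from $S$ to $\doublequot{K_1}{H}{K_2}$, and ``the unique $t\in S$ with $K_1(a^{-1}b)K_2=K_1t^{-1}K_2$'' can fail to exist (already for $H=S_3$, $K_1=\langle(12)\rangle$, $K_2=1$ and the representative set $S=\{1,(13),(123)\}$, the coset $K_1(23)$ is not of the form $K_1t^{-1}$ for any $t\in S$). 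Your alternative phrase ``the element $b^{-1}a$'' has the same defect: $K_1(b^{-1}a)K_2$ is not even well-defined under change of representatives of $[a]_{K_1}$ and $[b]_{K_2}$. The repair is immediate and gives the clean inverse you were after: let $t\in S$ be the unique representative with $a^{-1}b\in K_1tK_2$, write $a^{-1}b=k_1tk_2$, and send $([a]_{K_1},[b]_{K_2})$ to $[\,bk_2^{-1}]_{K_1^t\cap K_2}$; then $bk_2^{-1}t^{-1}=ak_1$, so this is indeed a two-sided inverse (and one checks, by the same kind of computation, that it is independent of the choices of $a$, $b$, $k_1$, $k_2$). With that correction your proof is complete and agrees with the paper's.
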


\begin{proof}
This is well-known. Use the surjection $(G/K_1)\times_{G/H} (G/K_2)\onto \doublequot{K_1}{H}{K_2}$ given by $([z_1]_{K_1},[z_2]_{K_2})\mapsto {}_{K_1}[z_1\inv\,z_2]_{K_2}$ and show that the fiber of each ${}_{K_1}[t]_{K_2}$ is exactly given by $G/(K_1^t\cap K_2)$ via the above map.
\end{proof}

\begin{Cor}
\label{cor:Mackey}%
Let $H$ be a finite group. Let $H',K\leq H$ be two subgroups such that the index $\Index{H}{K}$ is prime to~$p$. For any set of representatives $S\subset H$ of $\doublequot KH{H'}$, there exists $t\in S$ such that the index $\Index{H'}{K^t\cap H'}$ is also prime to~$p$.
\end{Cor}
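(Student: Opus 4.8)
The plan is to reduce the statement to a single additive identity
$$
\Index{H}{K}=\sum_{t\in S}\Index{H'}{K^t\cap H'}\,,
$$
and then to invoke the elementary fact that a sum of positive integers that is prime to $p$ must contain at least one summand prime to~$p$. So the whole content of the corollary is this counting formula together with a one-line divisibility observation.

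First I would apply the Mackey formula of Proposition~\ref{prop:Mackey} in the degenerate case where the ambient group \emph{and} the middle subgroup are both equal to~$H$, taking $K_1:=K$ and $K_2:=H'$. Since $H/H$ is a single point, the fibre product collapses to an honest product, $(H/K)\times_{H/H}(H/H')=(H/K)\times(H/H')$, and the formula produces a bijection of finite $H$-sets
$$
\coprod_{t\in S} H/(K^t\cap H')\isotoo (H/K)\times (H/H')\,.
$$
Counting elements on both sides gives $\sum_{t\in S}\Index{H}{K^t\cap H'}=\Index{H}{K}\cdot\Index{H}{H'}$.

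Next I would unwind this using the subgroup tower $K^t\cap H'\leq H'\leq H$: by the multiplicativity of indices, each term factors as $\Index{H}{K^t\cap H'}=\Index{H}{H'}\cdot\Index{H'}{K^t\cap H'}$. Dividing the whole identity by the nonzero integer $\Index{H}{H'}$ then yields the clean formula $\Index{H}{K}=\sum_{t\in S}\Index{H'}{K^t\cap H'}$ displayed above. Finally, since every summand $\Index{H'}{K^t\cap H'}$ is a positive integer and the total $\Index{H}{K}$ is prime to~$p$ by hypothesis, $p$ cannot divide all of the summands (else it would divide their sum); hence some $t\in S$ satisfies $p\nmid\Index{H'}{K^t\cap H'}$, which is exactly the claim.

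I expect no genuine obstacle here: the argument is a counting identity followed by a pigeonhole-style divisibility remark. The only points demanding care are the correct specialization of Proposition~\ref{prop:Mackey} to the case where the middle subgroup is the full group~$H$ (so that the fibre product degenerates to a product), and the index bookkeeping—matching the stabilizer $K\cap\con{t}{H'}$ with its conjugate $K^t\cap H'$, and keeping track of the factor $\Index{H}{H'}$ when passing from indices computed in~$H$ to indices in~$H'$. As an alternative one could avoid Proposition~\ref{prop:Mackey} entirely and derive the same identity from the double-coset cardinality $|KtH'|=|K|\,|H'|/|K\cap\con{t}{H'}|$ together with $|H|=\sum_{t\in S}|KtH'|$, but reusing the Mackey bijection already at hand is cleaner.
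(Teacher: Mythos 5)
Your proposal is correct and takes essentially the same route as the paper: both apply the Mackey formula of Proposition~\ref{prop:Mackey} with ambient group $H$, $K_1=K$, $K_2=H'$, count elements to get $\sum_{t\in S}[H:K^t\cap H'] = [H:K]\cdot[H:H']$, and then extract a summand of index prime to~$p$. The only difference is bookkeeping: you factor every term through the tower $K^t\cap H'\leq H'\leq H$ and divide by $[H:H']$ to reach the clean identity $\sum_{t\in S}[H':K^t\cap H']=[H:K]$, so the divisibility step is immediate, whereas the paper keeps the indices in $H$, argues with $p$-adic valuations ($p^{v+1}$ cannot divide every term), and applies the tower factorization only to the selected~$t$ at the end — your arrangement is slightly tidier but not a different argument.
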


\begin{proof}
Let $v\in\bbN$ be such that $p^v$ divides $\Index{H}{H'}$ but $p^{v+1}$ does not. Applying~\eqref{eq:Mackey} for $G=H$, $K_1=K$ and $K_2=H'$ and counting elements on both sides gives
$$
\sum_{t\in S}\,\Index{H}{K^t\cap H'}=\Index HK\cdot \Index H{H'}\,.
$$
Since $\Index{H}{K}$ is prime to~$p$, the right-hand side is not divisible by~$p^{v+1}$ . Hence $p^{v+1}$ cannot divide all the left-hand terms. Hence there exists $t\in S$ such that $p^{v+1}$ does not divide $\Index{H}{K^t\cap H'}$. Now, for that~$t$, contemplate the tower of subgroups $K^t\cap H'\ \leq \ H'\ \leq \ H$. We have that $p^{v+1}$ does not divide $\Index{H}{K^t\cap H'}$ but $p^v$ divides $\Index{H}{H'}$. Hence $\Index{H'}{K^t\cap H'}$ is prime to $p$, as wanted.
\end{proof}

\begin{Not}
The \emph{stabilizer} of $x\in X\in \Gsets$ is $\St_G(x):=\SET{g\in G}{gx=x}$. For every $G$-map $f:Y\to X$ and every~$y\in Y$, we have $\St_G(y)\leq \St_G(f(y))$ in~$G$.
\end{Not}

\begin{Def}
\label{def:top}%
Let $X$ be a finite $G$-set.
\begen[(a)]
\item
An arbitrary (possibly infinite) family of $G$-maps $\{U_i\otoo{\alpha_i} X\}_{i\in I}$ in $\Gsets$ is a \emph{sipp-covering} if for every $x\in X$ there exists $i\in I$ and $u\in U_i$ such that $\alpha_i(u)=x$ and such that the index $\Index{\St_G(x)}{\St_G(u)}$ is prime to~$p$.
\smallbreak
\item
A single morphism $U\oto{\alpha} X$ in $\Gsets$ is a \emph{sipp-cover} if $\{U\to X\}$ is a covering, \ie for every $x\in X$ there exists $u\in \alpha\inv(x)$ with $[\St_G(x):\St_G(u)]$ prime to~$p$\,:
$$
\xymatrix@C=1em@R=1em{
U \ar@{->>}[d]_-{\alpha} \ar@{}[r]|{\ni}
& u \ar@{|->}[d]
&& \St_G(u) \ar@{}[d]|{\mid\bigwedge}
& \ar@{}[d]|-{\quad\textrm{index prime to }p}
\\
X \ar@{}[r]|{\ni}
& x
&& \St_G(x)
&}
$$
\ened
After Theorem~\ref{thm:top}, we shall call this (the basis of) the \emph{sipp topology} on~$\Gsets$\,(\footnote{In the tradition of the fppf- and fpqc-topologies, the acronym ``sipp" really stands for the French\,: ``\underbar{s}tabilisateurs d'\underbar{i}ndice \underbar{p}remier \`a \underbar{$p$}".}).
\end{Def}

\begin{Rem}
A family $\{U_i\to X\}_{i\in I}$ is a sipp-covering if and only if there exists $U\in \Gsets$ and $U\to \coprod_{i\in I}U_i$ such that the composite $U\to X$ is a sipp-cover.
\end{Rem}

\begin{Exa}
\label{exa:top}%
Let $K\leq H$ be subgroups of~$G$. Then the projection $G/K\onto G/H$ is a sipp-cover if and only if the index $\Index{H}{K}$ is prime to~$p$. For a family $K_i\leq H$, $\{G/K_i\onto G/H\}_{i\in I}$ is a sipp-covering if and only if some $[H:K_i]$ is prime to~$p$.
\end{Exa}

\begin{Thm}
\label{thm:top}%
The sipp-coverings of Definition~\ref{def:top} form a basis of a \emph{Grothendieck topology} on~$\Gsets$, namely they satisfy all the following properties\,:
\begen
\item
Every isomorphism $U\isoto X$ is a sipp-cover.
\smallbreak
\item
If $\{\alpha_i:U_i\to X\}_{i\in I}$ is a sipp-covering of~$X$ and if $\beta:Y\to X$ is a $G$-map, then the pull-backs define a sipp-covering $\{\pr_2:U_i\times_XY\to Y\}_{i\in I}$ of~$Y$.
\smallbreak
\item
If $\{\alpha_i:U_i\to X\}_{i\in I}$ is a sipp-covering of~$X$ and if $\{\beta_{ij}:V_{ij}\to U_i\}_{j\in J_i}$ is a sipp-covering of $U_i$ for every $i\in I$, then the composite family $\{\alpha_i\beta_{ij}:V_{ij}\to X\}_{i\in I, j\in I_i}$ is a sipp-covering of~$X$.
\ened
In words, $\Gsets$ becomes a \emph{site} (a category equipped with a Grothendieck topology).
\end{Thm}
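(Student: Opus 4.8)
The plan is to check the three basis axioms (a), (b), (c) of a Grothendieck topology directly against Definition~\ref{def:top}, the only real work being the pullback axiom~(b), which I would reduce to the Mackey-type Corollary~\ref{cor:Mackey}.

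Axiom~(a) should be immediate: for an isomorphism $\phi:U\isotoo X$ and every $x\in X$, the single preimage $u=\phi\inv(x)$ satisfies $\St_G(u)=\St_G(x)$, since $\phi$ is $G$-equivariant and bijective, so the relevant index $\Index{\St_G(x)}{\St_G(u)}=1$ is prime to~$p$. For axiom~(c) I would use multiplicativity of the index in a tower of subgroups. Given $x\in X$, first pick $i$ and $u\in U_i$ over~$x$ with $\Index{\St_G(x)}{\St_G(u)}$ prime to~$p$; then, since $\{V_{ij}\to U_i\}_j$ covers~$U_i$, pick $j$ and $v\in V_{ij}$ over~$u$ with $\Index{\St_G(u)}{\St_G(v)}$ prime to~$p$. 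As $\St_G(v)\leq\St_G(u)\leq\St_G(x)$, the composite index $\Index{\St_G(x)}{\St_G(v)}$ factors as the product of these two, hence stays prime to~$p$; since $v$ lies over~$x$ via $\alpha_i\beta_{ij}$, the composite family covers.

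The main obstacle is axiom~(b). The key observation is that, $G$ acting diagonally (Remark~\ref{rem:lim}), the stabilizer of a point $(u',y')$ of $U_i\times_XY$ is the intersection $\St_G(u')\cap\St_G(y')$. Fix $y\in Y$ and write $x=\beta(y)$, $H=\St_G(x)$ and $K=\St_G(y)$, so that $K\leq H$. Since $\{\alpha_i\}$ covers~$X$, there are $i$ and $u\in U_i$ over~$x$ with $L:=\St_G(u)$ of index prime to~$p$ in~$H$. The points of $U_i$ over~$x$ in the orbit of~$u$ are exactly the translates $hu$ for $h\in H$, with $\St_G(hu)=\con{h}L$, so I need to produce $h\in H$ making $\Index{K}{\con{h}L\cap K}$ prime to~$p$; then $(hu,y)$ lies in $U_i\times_XY$ over~$y$ (because $hx=x=\beta(y)$) with stabilizer of index prime to~$p$ in $K=\St_G(y)$, which is precisely what a sipp-covering of~$Y$ requires at~$y$. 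Producing such an~$h$ is where Corollary~\ref{cor:Mackey} enters: applied inside~$H$ to the subgroup~$L$ (of index prime to~$p$) and the subgroup~$K$, it yields a representative $t$ of a double coset in $L\backslash H/K$ with $\Index{K}{L^t\cap K}$ prime to~$p$; taking $h=t\inv$ gives $\con{h}L=L^t$ and finishes the verification. I expect no further difficulty once the stabilizer-as-intersection identity and this reduction are in place.
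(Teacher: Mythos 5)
Your proof is correct, and it rests on the same arithmetic input as the paper's proof, namely Corollary~\ref{cor:Mackey}; the difference lies in how axiom~(b) is organized. The paper first reduces, via axiom~(a) and distributivity of pull-backs over coproducts (Remark~\ref{rem:lim}), to the case of a cover $G/K\onto G/H$ pulled back along a projection $G/H'\to G/H$, and then identifies the whole fiber product globally through Mackey's formula (Proposition~\ref{prop:Mackey}) as $\coprod_{t}G/(K^t\cap H')$, so that Example~\ref{exa:top} together with Corollary~\ref{cor:Mackey} concludes. You instead argue pointwise: the identity $\St_G(u',y')=\St_G(u')\cap\St_G(y')$ for the diagonal action, the fact that the fiber of $U_i$ over $x$ inside the orbit of $u$ consists of the translates $hu$ for $h\in H=\St_G(x)$ with $\St_G(hu)=\con{h}L$, and Corollary~\ref{cor:Mackey} applied inside $H$ to the subgroups $L$ and $K=\St_G(y)$ produce the required point $(t\inv u,\,y)$ directly. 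Your route is leaner: it needs no reduction to orbits, no explicit identification of the pull-back, and it sidesteps the standard but unspoken conjugation bookkeeping in the paper's reduction step (a general $G$-map between orbits is a $\beta_g$ in the sense of Notation~\ref{not:bcd}, not a projection). What the paper's route buys is the explicit orbit decomposition of $U\times_X U$ via the maps of Proposition~\ref{prop:Mackey}, which is precisely the machinery it reuses in Part~\ref{part:C} (Notation~\ref{not:bcd}, Theorem~\ref{thm:tame}). Your treatments of (a) and (c) --- equivariant bijections preserve stabilizers, and multiplicativity of the index in the tower $\St_G(v)\leq\St_G(u)\leq\St_G(x)$ --- are exactly the ``easy exercises'' the paper leaves to the reader.
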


\begin{proof}
Parts~(a) and~(c) are easy exercises. Let us prove~(b). Using~(a) and distributivity of pull-back with coproducts (Remark~\ref{rem:lim}), it suffices to prove the following special case\,: Let $H$ be a subgroup of~$G$ and $K,H'\leq H$ two subgroups of~$H$ such that $\Index{H}{K}$ is prime to~$p$. Consider the right-hand square of $G$-sets below\,:
$$
\xymatrix@C=3em@R=1.8em{
\coprod_{t\in S}\ G/(K^t\cap H')\ \ar[r]^-{\simeq}_-{\eqref{eq:Mackey}}
&\ {G/K}\times_{G/H} {G/H'}\ar[r]^-{\Displ\pr_2} \ar[d] & {G/H'} \ar[d]^{\Displ\beta}
\\
&{G/K} \ar[r]^-{\Displ\alpha} & G/H\,.
}
$$
Here $\alpha$ is the sipp-cover and $\beta$ is the other map. We need to prove that $\pr_2$ is a sipp-cover. The left-hand isomorphism is Mackey's formula~\eqref{eq:Mackey} for any $S\subset H$ such that $S\isoto \doublequot{K}{H}{H'}$. Composing this isomorphism with $\pr_2$ gives us the $G$-map
$$
\xymatrix@C=4em{
{\coprod_{t\in S}^{\vphantom{I^I}}\ G/(K^t\cap H') }\ \ar[r]^-{\coprod \Displ\alpha_t} & \ G/H'}
$$
where $\alpha_t:G/(K^t\cap H')\onto G/H'$ is the projection associated to $K^t\cap H'\leq H'$. This is now a sipp-cover if at least one of the indices $\Index{H'}{K^t\cap H'}$ is prime to~$p$ (Example~\ref{exa:top}) and this is exactly what was established in Corollary~\ref{cor:Mackey}.
\end{proof}

\begin{Rem}
\label{rem:loc}%
An object $X\in \Gsets$ is sipp-local (meaning that for every sipp-covering of~$X$, one of its morphisms admits a section) if and only if $X$ is an orbit whose stabilizer is a $p$-subgroup of~$G$, \ie $X\simeq G/H$ with $H$ a $p$-group. Indeed, suppose that $|H|$ is a power of~$p$ and that $\{U_i\oto{\alpha_i} G/H\}_{i\in I}$ is a sipp-covering. By Example~\ref{exa:top}, some orbit of some $U_i$ must be isomorphic to $G/K$ with $K\leq H$ of index prime to~$p$, which forces $K=H$. Conversely, suppose that $X$ is sipp-local. Then $X$ is connected (if $X=X_1\sqcup X_2$, use the covering $\{X_i\hook X\}_{i=1,2}$), hence $X\simeq G/H$ for some $H\leq G$. Let $K\leq H$ be a Sylow $p$-subgroup of $H$ and consider the cover $G/K\onto G/H$. This has a section, hence $K=H$ and $H$ is a $p$-group.
\end{Rem}

\begin{Rem}
\label{rem:sheaf}%
With this Grothendieck topology, we can now speak of \emph{sipp-sheaves} on $\Gsets$. A presheaf of sets, \ie a functor $P:\Gsets\op\to \Sets$, is a \emph{sheaf} if for every covering $\{U_i\to X\}_{i\in I}$ the following usual sequence of sets is an equalizer\,:
\begin{equation}
\label{eq:sheaf}%
\vcenter{\xymatrix@C=2.2em{
P(X) \ \ar@{ >->}[rr]^-{\prod_{i} P(\alpha_i)}
&& \ \Displ\prod_{i\in I}P(U_i)\
 \ar@<.5em>[rrr]^-{\prod_{j,k}P(\pr_1)\circ\,\pr_j}
 \ar[rrr]_-{\prod_{j,k}P(\pr_2)\circ\,\pr_k}
&&& \Displ\prod_{(j,k)\in I\times I} P(U_j\times_X U_k)
}}\kern-1em
\end{equation}
This means that restriction $s\mapsto (\alpha_i^*(s))_{i\in I}$ yields a bijection between $P(X)$ and the subset of those $(s_i)_{i\in I}\in\prod_{i}P(U_i)$ such that $\pr_1^*(s_j)=\pr_2^*(s_k)$ for every $j,k\in I$, where $\pr_1:U_j\times_XU_k\to U_j$ and $\pr_2:U_j\times_XU_k\to U_k$ are the two projections. Here $\alpha_i^*=P(\alpha_i)$ and $\pr_i^*=P(\pr_i)$ are the ``restriction" maps for the presheaf~$P$.
\end{Rem}

\begin{Rem}
\label{rem:cover}%
The sipp topology is \emph{quasi-compact}, in the sense that for every sipp-covering $\{U_i\to X\}_{i\in I}$, there exists $J\subseteq I$ finite such that $\{U_i\to X\}_{i\in J}$ is a covering too. Hence, it suffices to check sheaf conditions~\eqref{eq:sheaf} for \emph{finite} coverings $\{U_i\oto{\alpha_i} X\}_{i=1}^n$. Furthermore, $\Gsets$ has finite coproducts $\coprod_{i=1}^n U_i$ and finite coverings as above induce covers $U:=\coprod_{i=1}^n U_i\otoo{\sqcup_i\alpha_i}X$. Hence it suffices to verify sheaf conditions for sipp-covers $\alpha:U\to X$. This reduction from coverings $\{U_i\to X\}_{i\in I}$ to a single morphism $U\to X$ is a well-known flexibility of Grothendieck topologies.
\end{Rem}

\begin{Rem}
We do not use but simply indicate that our sipp topology is \emph{subcanonical}, \ie every represented presheaf $\Hom_\Gsets(-,Z): \Gsets\op\to \Sets$ is a sheaf, for every $Z\in\Gsets$. This follows from surjectivity of sipp-covers $U\onto X$.
\end{Rem}

\begin{Prop}
\label{prop:sheaf}%
Let $P:\Gsets\op\to \Sets$ be a presheaf. Then it is a sheaf if and only if the following conditions are satisfied\,:
\begen[(i)]
\item
Whenever $X=X_1\sqcup X_2$, the natural map $P(X)\to P(X_1)\times P(X_2)$ is an isomorphism. Also $P(\varnothing)$ is one point.
\item
For every pair of subgroups $K\leq H$ in $G$, such that $[H:K]$ is prime to~$p$, the sheaf condition~\eqref{eq:sheaf} holds for the sipp-cover $G/K\onto G/H$.
\ened
\end{Prop}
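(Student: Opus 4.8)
The plan is to prove the two implications separately: the direction ``sheaf $\Rightarrow$ (i),(ii)'' is essentially a matter of plugging in the right coverings, while the converse is the substantive part and is built out of (i), (ii) and the pull-back axiom of Theorem~\ref{thm:top}.

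First I would dispatch ``sheaf $\Rightarrow$ (i),(ii)''. Condition (ii) is immediate, since $G/K\onto G/H$ is a sipp-cover whenever $[H:K]$ is prime to $p$ (Example~\ref{exa:top}), so~\eqref{eq:sheaf} holds for it by the very definition of a sheaf. For (i) I would feed the sheaf the sipp-covering $\{X_1\hook X,\ X_2\hook X\}$ (each inclusion preserves stabilizers, so has index $1$): the relevant fibre products are $X_1\times_X X_1=X_1$, $X_2\times_X X_2=X_2$ and $X_1\times_X X_2=X_2\times_X X_1=\varnothing$, so once $P(\varnothing)$ is known to be a single point the equalizer~\eqref{eq:sheaf} collapses to the assertion that $P(X)\to P(X_1)\times P(X_2)$ is a bijection. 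That $P(\varnothing)$ is a point follows from the sheaf condition applied to the empty covering of~$\varnothing$.

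For the converse, assume (i) and (ii). By Remark~\ref{rem:cover} it suffices to verify~\eqref{eq:sheaf} for a single sipp-cover $\alpha\colon U\to X$. Writing $X=\coprod_i X_i$ as its orbits, condition (i) together with $U=\coprod_i\alpha\inv(X_i)$ and $U\times_X U=\coprod_i \alpha\inv(X_i)\times_{X_i}\alpha\inv(X_i)$ splits the diagram~\eqref{eq:sheaf} into the product of the corresponding diagrams over the orbits; since $\alpha$ is a sipp-cover if and only if its restriction over each orbit is, I may assume $X=G/H$ is a single orbit. After choosing isomorphisms, $U\cong\coprod_j G/K_j$ with each $K_j\leq H$ and $\alpha$ the disjoint union of the standard projections, and sippness forces at least one index $[H:K_{j_0}]$ to be prime to~$p$ (Example~\ref{exa:top}). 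Fix such a $j_0$, set $K:=K_{j_0}$, $V:=G/K$, let $\iota\colon V\hook U$ be the summand inclusion and $\beta:=\alpha\,\iota\colon V\onto G/H$, which is a sipp-cover for which~\eqref{eq:sheaf} holds by~(ii). The first genuine step is a \emph{separatedness lemma}: for \emph{every} sipp-cover $\gamma\colon W\to Y$ the map $\gamma^*\colon P(Y)\to P(W)$ is injective. Reducing to $Y=G/H'$ an orbit by~(i) and choosing an orbit summand $V'=G/K'\hook W$ with $[H':K']$ prime to~$p$, the composite $V'\hook W\to Y$ satisfies~(ii), so $P(Y)\to P(V')$ is injective (it is the first map of an equalizer); as this factors through $\gamma^*$, the latter is injective. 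It is essential that this be proved for arbitrary base $W$, via the decomposition~(i), and not only for orbit bases.

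The main obstacle is the gluing step, and the key trick is an auxiliary cover. Starting from a matching family $s\in P(U)$, i.e.\ $\pr_1^* s=\pr_2^* s$ in $P(U\times_X U)$, restriction along $\iota$ shows that $\iota^* s\in P(V)$ is a matching family for $\beta$ (the condition over $V\times_X V\subseteq U\times_X U$ is a restriction of the one over $U\times_X U$), so~(ii) produces a unique $t\in P(G/H)$ with $\beta^* t=\iota^* s$. To promote this to $\alpha^* t=s$, I would form the pull-back $U\times_X V$ with projections $q_1\colon U\times_X V\to U$ and $q_2\colon U\times_X V\to V$; by Theorem~\ref{thm:top}(b), $q_1$ is again a sipp-cover, being $\beta$ pulled back along~$\alpha$. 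Pulling back the matching identity along $(q_1,\iota q_2)\colon U\times_X V\to U\times_X U$ and using $\beta=\alpha\iota$ and $\alpha q_1=\beta q_2$ gives
$$
q_1^* s=q_2^*\iota^* s=q_2^*\beta^* t=q_1^*\alpha^* t\,.
$$
The separatedness lemma applied to the sipp-cover $q_1$ then forces $s=\alpha^* t$, and injectivity of $\alpha^*$ yields uniqueness of~$t$, completing~\eqref{eq:sheaf}. I expect the bookkeeping around the auxiliary cover $U\times_X V\to U$ — and the realization that separatedness had to be established for all covers precisely so as to apply it to $q_1$, whose base $U$ is not an orbit — to be the delicate point.
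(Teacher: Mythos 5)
Your proof is correct and follows the same route as the paper's: necessity by feeding the sheaf the coverings $\{X_i\hookrightarrow X\}$ (plus the empty covering of $\varnothing$) and $G/K\onto G/H$, and sufficiency by reducing via Remark~\ref{rem:cover} and condition (i) to a single cover of an orbit $G/H$, then refining it by an orbit summand $G/K\onto G/H$ with $[H:K]$ prime to~$p$ and invoking (ii). The paper compresses the transfer from the refinement back to the original cover into the words ``we can apply condition (ii)''; your separatedness lemma for sipp-covers over arbitrary (non-orbit) bases and the auxiliary pulled-back cover $U\times_X V\to U$ supply precisely the argument that this compressed step requires, so your write-up is a faithful (and more complete) elaboration of the paper's proof.
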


\begin{proof}
These conditions are easily seen to be necessary. Conversely, suppose that~(i) holds, then we can reduce the verification of the sheaf condition for all covers $U\to X$ (Remark~\ref{rem:cover}) to covers of the orbits of~$X$, so we can assume that $X=G/H$ for some subgroup $H\leq G$. In that case, the cover admits a refinement of the form $G/K\to G/H$ where we have $\Index{H}{K}$ prime to~$p$ and we can apply condition~(ii).
\end{proof}

Here is an amusing and yet useful example. For every $X\in\Gsets$, let $\overline{X}:=\SET{Gx}{x\in X,\ p\textrm{ divides }|\St_G(x)|}$ be the set of $G$-orbits of points of~$X$ with stabilizer of order divisible by~$p$. Then $\overline{(-)}$ is a well-defined functor from $\Gsets$ to finite sets, since $G$-maps only enlarge stabilizers and preserve $G$-orbits.
\begin{Prop}
\label{prop:uA}%
Let $A$ be an abelian group. Define the abelian group $\uA(X)$ to be $A^{\overline{X}}=\Mor_{\Sets}(\overline{X},A)$ for every $X\in\Gsets$ and the homomorphism $\uA(\alpha):\uA(X)\to \uA(Y)$ to be $A^{\overline{\alpha}}=\overline\alpha^*=-\circ\overline\alpha$ for every $G$-map $\alpha:Y\to X$. Then the presheaf $\uA:\Gsets\op\to \bbZ\MMod$ is a sheaf of abelian groups for the sipp topology.
\end{Prop}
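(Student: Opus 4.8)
The plan is to reduce everything to the two conditions of Proposition~\ref{prop:sheaf}, which say that it suffices to check that $\uA$ turns disjoint unions into products and that the sheaf condition~\eqref{eq:sheaf} holds for a single orbit-cover $G/K\onto G/H$ with $\Index HK$ prime to~$p$. Condition~(i) should be immediate: since $G$-maps preserve $G$-orbits and only enlarge stabilizers, the functor $\overline{(-)}$ carries disjoint unions to disjoint unions, so $\overline{X_1\sqcup X_2}=\overline{X_1}\sqcup\overline{X_2}$ and $\overline{\varnothing}=\varnothing$. Hence $\uA(X_1\sqcup X_2)=A^{\overline{X_1}\sqcup\overline{X_2}}\cong A^{\overline{X_1}}\times A^{\overline{X_2}}$ (the natural map being exactly restriction along the two inclusions) and $\uA(\varnothing)=A^{\varnothing}=\{*\}$.

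For condition~(ii) I would fix $K\leq H$ with $\Index HK$ prime to~$p$ and study the cover $\alpha:G/K\onto G/H$. The decisive observation is that, since $G/H$ and $G/K$ are \emph{transitive}, each of $\overline{G/H}$ and $\overline{G/K}$ has at most one element: the unique $G$-orbit, which survives $\overline{(-)}$ precisely when $p$ divides the stabilizer order, namely $|H|$, respectively~$|K|$. Because $|H|=\Index HK\cdot|K|$ with $\Index HK$ prime to~$p$, one has $p\mid|H|$ iff $p\mid|K|$, so the alternatives $\overline{G/H}=\varnothing$ versus $\{*\}$ match up exactly with $\overline{G/K}=\varnothing$ versus $\{*\}$. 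This collapses the whole problem to two cases.

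In the case $p\nmid|H|$, every stabilizer occurring in $G/K$ and in the pull-back $G/K\times_{G/H}G/K$ divides $|K|$ and is therefore prime to~$p$ (decompose the pull-back via the Mackey bijection~\eqref{eq:Mackey} if one wants this explicitly), so all three terms of~\eqref{eq:sheaf} are one-point sets and the equalizer condition is trivial. In the case $p\mid|H|$, both $\uA(G/H)$ and $\uA(G/K)$ equal $A$, and $\alpha^*$ is a bijection because $\overline\alpha$ is the unique map between one-point sets. Here is the key point: since $\overline{G/K}$ is a single point, it is terminal in $\Sets$, so the two maps $\overline{\pr_1},\overline{\pr_2}:\overline{G/K\times_{G/H}G/K}\to\overline{G/K}$ induced by the projections automatically coincide; consequently $\pr_1^*=\pr_2^*$ as maps $\uA(G/K)\to\uA(G/K\times_{G/H}G/K)$, and the equalizer of~\eqref{eq:sheaf} is all of $\uA(G/K)=A$. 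As $\alpha^*$ is a bijection onto it, the sheaf condition holds, finishing~(ii).

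Conceptually the argument amounts to checking that $\overline{G/K\times_{G/H}G/K}\rightrightarrows\overline{G/K}\to\overline{G/H}$ is a coequalizer in $\Sets$ and then applying the contravariant functor $\Mor_{\Sets}(-,A)$, which sends coequalizers to equalizers. I do not anticipate a serious obstacle; the only thing requiring care is the bookkeeping of which orbits survive $\overline{(-)}$, and transitivity disposes of this by collapsing $G/H$ and $G/K$ to points and forcing the two pull-back maps to agree for the trivial reason that their common target is terminal. In particular no genuine cocycle or Mackey-formula computation is needed beyond the elementary index count already recorded above.
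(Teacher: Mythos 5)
Your proposal is correct and follows essentially the same route as the paper's proof: reduce via Proposition~\ref{prop:sheaf} to a single orbit-cover $G/K\onto G/H$ with $\Index{H}{K}$ prime to~$p$, observe that $\overline{G/K}$ and $\overline{G/H}$ coincide (both empty or both a point, since $p\mid|H|$ iff $p\mid|K|$), and note that the two maps $\overline{G/K\times_{G/H}G/K}\rightrightarrows\overline{G/K}$ are forced to be equal, so applying $\Mor_{\Sets}(-,A)$ gives the equalizer condition. Your write-up merely makes explicit the case analysis and the verification of condition~(i) that the paper leaves as ``clear.''
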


\begin{proof}
To check that $\uA$ is a sheaf, by Proposition~\ref{prop:sheaf}, it suffices to verify the sheaf condition for covers of the form $U=G/K\onto G/H=X$ with $K\leq H$ of index prime to~$p$. This is clear for then $\overline{U}=\overline{X}$ and the two maps $\overline{U\times_XU}\rightrightarrows \overline{U}$ are equal.
\end{proof}

\begin{Rem}
\label{rem:uA}%
Indeed, $\uA$ is the sipp-sheafification of the constant presheaf~$A$. We call it the \emph{constant sheaf} associated to~$A$. For every $X\simeq G/H_1\sqcup \cdots \sqcup G/H_n$ we have $\uA(X)=A^m$ where $m=\#\SET{1\leq i\leq n}{p\textrm{ divides }|H_i|}$. The behavior of $\uA(-)$ on $G$-maps is rather obvious and only involves 0 or $\id_A$ on each component~$A$.
\end{Rem}

%------------------------------------------------------------------------------
\medbreak\goodbreak
\section{Plain, derived and stable representations of $G$-sets}
\label{se:rep}%
\medbreak
%------------------------------------------------------------------------------

Recall that $G$ is a finite group. We want to define the category of representations $\Rep(X)$ for every finite $G$-set~$X$ in such a way that $\Rep(G/H)$ is equivalent to $\kk H\mmod$. The problem with using $\kk H\MMod$ directly is that ``twisted" restriction $\con{g}(-)\dto^H_K$ from $H$ to $K$ \emph{does} depend on the choice of the representative~$g$ in its left $H$-class ${}_{H}[g]\in {}_{H}\backslash N_G(K,H)\cong\Hom_\Gsets(G/K\,,\,G/H)$. The standard trick around this indeterminacy is to use the associated ``action groupoids" as follows.

\begin{Def}
\label{def:groupoid}%
Let $X$ be a (finite) $G$-set. Define the \emph{action groupoid} $G\ltimes X$ to be the category whose objects are the elements of~$X$ with morphisms $\Mor_{G\ltimes X}(x,x'):=\SET{g\in G}{gx=x'}$, being subsets of~$G$. Composition is defined by multiplication in~$G$. Clearly, every morphism in $G\ltimes X$ is an isomorphism, \ie $G\ltimes X$ is a groupoid. For every $G$-map $\alpha:X\to Y$, the functor $G\ltimes\alpha\,:\ G\ltimes X\to G\ltimes Y$ is simply $\alpha$ on objects and the ``inclusion" on morphisms (as subsets of~$G$).
\end{Def}

We can now speak of representations, as usual.

\begin{Def}
\label{def:rep}%
Let $\cat A$ be a fixed ``base" additive category (\eg\ $\cat A=\kk\MMod$ for a commutative ring~$\kk$). For every $G$-set $X$, denote by
$$
\Rep(X)=\cat A^{G\ltimes X}
$$
the category of functors from $G\ltimes X$ to~$\cat A$. We call it \emph{the (plain) category of representations of~$X$ (in~$\cat A$)}. See Remark~\ref{rem:cash} for a more elementary approach.

Assume moreover that $\cat A$ is abelian. Then so is~$\Rep(X)$. Let then $\Der(\Rep(X))$ be the \emph{derived category of representations}, whose objects are complexes in $\Rep(X)$ and morphisms are morphisms of complexes with quasi-isomorphisms inverted.

If we assume that $\kk$ is a field and $\cat A=\kk\MMod=:\kk\VVect$, then we claim that $\Rep(X)$ is a Frobenius category, meaning that injective and projective objects coincide and there are enough of both. We can therefore construct the \emph{stable category of representations} $\StRep(X)=\Rep(X)/\Proj(\Rep(X))$ as the additive quotient by the projective objects. It has the same objects as $\Rep(X)$ but any two morphisms whose difference factors via a projective are identified.

Both $\Der(\Rep(X))$ and $\StRep(X)$ are well-known triangulated categories.
\end{Def}

\begin{Rem}
\label{rem:cash}%
Removing groupoids from the picture, an object $V$ of $\Rep(X)$ consists of the data of objects $V_x$ in~$\cat A$, for every $x\in X$, together with isomorphisms $V_g:V_x\isoto V_{gx}$ in~$\cat A$, for every $g\in G$, subject to the rule that $V_1=\id$ and $V_{g_2g_1}=V_{g_2}\circ V_{g_1}$. A morphism $f:V\to V'$ in $\Rep(X)$ consists of a collection $f_x:V_x\to V'_x$ of morphisms in $\cat A$, for every $x\in X$, such that $V'_g\circ f_x=f_{gx}\circ V_g$ from $V_x$ to $V'_{gx}$, for every $g\in G$. Composition is the obvious $(f'\circ f)_x=f'_x\circ f_x$.
\end{Rem}

\begin{Exa}
\label{exa:BH}%
Let $H\leq G$ be a subgroup and $G/H\in\Gsets$ the associated orbit. Then the groupoid $\rmB H=H\ltimes\ast$ (with one object $\ast$ and $H$ as automorphism group) is equivalent to $G\ltimes (G/H)$ via $\iota_H:\rmB H\hook G\ltimes(G/H)$, $\ast\mapsto [1]_H$. Let $\cat A=\kk\MMod$ for a commutative ring~$\kk$. Then we have an equivalence of categories
\begin{equation}
\label{eq:iota}%
\iota_H^*:\Rep(G/H)\isotoo \kk H\MMod
\end{equation}
given by $V\mapsto V_{[1]}$. In particular, when $\kk$ is a field, $\Rep(G/H)$ is a Frobenius abelian category and, since $\Rep(X_1\sqcup X_2)\cong\Rep(X_1)\oplus\Rep(X_2)$, the category $\Rep(X)$ is Frobenius for every finite $G$-set~$X$ (as claimed in Definition~\ref{def:rep}).
\end{Exa}

Let us clarify the functoriality of $\Rep(X)$ in the $G$-set~$X$.

\begin{Def}
Let $\alpha:Y\to X$ be a morphism of $G$-sets. Let $\alpha^*:\Rep(X)\to \Rep(Y)$ be the functor $\cat A^{G\ltimes \alpha}=-\circ(G\ltimes \alpha)$. For every representation $V\in \Rep(X)$, we can give $\alpha^*V\in\Rep(Y)$ as in Remark~\ref{rem:cash}\,: For every $y\in Y$ and $g\in G$, we have
$$
(\alpha^*V)_y=V_{\alpha(y)}
\qquadtext{and}
(\alpha^*V)_g=V_g\,.
$$
Similarly, for every $f:V\to V'$ over~$X$, we have $(\alpha^* f)_y=f_{\alpha(y)}$ for every $y\in Y$.
\end{Def}

\begin{Rem}
The above functor $\alpha^*:\Rep(X)\to\Rep(Y)$ is exact when $\cat A$ is abelian and preserves projective objects when $\cat A=\kk\VVect$ (Example~\ref{exa:BH}). This induces well-defined functors on derived and stable categories, still denoted
$$
\alpha^*:\Der(\Rep(X))\to \Der(\Rep(Y))\qquadtext{and}\alpha^*:\StRep(X))\to \StRep(Y))\,.
$$
\end{Rem}

\begin{Prop}
We have a strict contravariant functor $\Rep(-):\Gsets\op\too\Add$ from $G$-sets to the category of additive categories. Similarly, when $\cat A$ is abelian (resp.\ when $\cat A=\kk\VVect$ for a field~$\kk$) then $\Der(\Rep(-)):\Gsets\op\too\Add$ (resp.\ $\StRep(-):\Gsets\op\too\Add$) is also a strict contravariant functor.
\end{Prop}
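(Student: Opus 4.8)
The plan is to exhibit $\Rep(-)$ as a \emph{composite} of two manifestly strict functors, so that strictness becomes automatic and no coherence data ever needs to be checked. The crucial first observation is that the action-groupoid assignment of Definition~\ref{def:groupoid} is already a strict covariant functor $G\ltimes(-):\Gsets\to\Cat$ into the category of small categories. Indeed, from the very definition of $G\ltimes\alpha$ (namely $\alpha$ on objects and the inclusion of subsets of~$G$ on morphisms) one reads off $G\ltimes\id_X=\id_{G\ltimes X}$ and $G\ltimes(\alpha\circ\beta)=(G\ltimes\alpha)\circ(G\ltimes\beta)$ on the nose: on objects this is the equality $(\alpha\circ\beta)(y)=\alpha(\beta(y))$, while on morphisms both sides are literally the same inclusion of subsets of~$G$. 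Nothing is inverted and nothing is chosen, so there is simply no room for a discrepancy.

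I would then recall that, for any functor $F:\cat B\to\cat B'$ between small categories, precomposition $\cat A^F=-\circ F:\cat A^{\cat B'}\to\cat A^{\cat B}$ is tautologically a strict contravariant functor of~$F$: it sends identity functors to identity functors and satisfies $\cat A^{F\circ F'}=\cat A^{F'}\circ\cat A^{F}$, by the definition of composition of functors. Since $\alpha^*=\cat A^{G\ltimes\alpha}$, combining this with the previous paragraph gives at once $\id_X^{\,*}=\Id_{\Rep(X)}$ and $(\alpha\circ\beta)^*=\beta^*\circ\alpha^*$, which is exactly strict contravariant functoriality of $\Rep(-)$. That this lands in $\Add$ is clear: when $\cat A$ is additive the functor category $\cat A^{G\ltimes X}$ is additive under pointwise biproducts and pointwise zero object, and each $\alpha^*$ visibly preserves this pointwise structure (one has $(\alpha^*(V\oplus W))_y=V_{\alpha(y)}\oplus W_{\alpha(y)}$), so each $\alpha^*$ is an additive functor.

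For the derived and stable versions I would transport these same equalities along the passage to $\Der$ and $\StRep$ already set up above. When $\cat A$ is abelian, $\alpha^*$ is exact, hence acts degreewise on complexes and preserves quasi-isomorphisms, inducing $\alpha^*$ on $\Der(\Rep(-))$; when $\cat A=\kk\VVect$, $\alpha^*$ preserves projectives, hence descends to the stable quotient $\StRep(-)=\Rep(-)/\Proj(\Rep(-))$. In both cases the induced functors are computed by applying the plain $\alpha^*$ objectwise (degreewise on complexes, and on the same underlying objects in the stable category), so the strict identities $\id_X^{\,*}=\Id$ and $(\alpha\circ\beta)^*=\beta^*\circ\alpha^*$ already valid on $\Rep$ pass verbatim to $\Der(\Rep(-))$ and to $\StRep(-)$. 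Each induced functor remains additive (indeed triangulated), so these are again strict contravariant functors into $\Add$.

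As for difficulty, there is no genuine obstacle here, and that is precisely the payoff of organising everything through the action groupoid: strictness is built in rather than imposed. The only place warranting a moment's care will be the derived and stable step, where one must confirm that the \emph{induced} functors still compose strictly rather than merely up to natural isomorphism. This is immediate, however, because they are produced by the same objectwise/degreewise recipe that already composes strictly at the level of representations, so no comparison isomorphisms are ever introduced and the equalities descend untouched.
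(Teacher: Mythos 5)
Your proposal is correct and is essentially the paper's own argument: the paper's proof reads ``Easy verification'' plus the remark that the derived and stable functors are given by the same formula applied objectwise (so that $\alpha^*$ never needs to be derived), and your factorization of $\Rep(-)$ as precomposition $\cat A^{(-)}$ with the strict functor $G\ltimes(-)$ is exactly that verification made explicit, since the paper already defines $\alpha^*=\cat A^{G\ltimes\alpha}$. Your final paragraph, checking that the objectwise/degreewise recipe transports the strict equalities untouched to $\Der(\Rep(-))$ and $\StRep(-)$, is precisely the point the paper singles out in its one-line proof.
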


\begin{proof}
Easy verification. The new functors $\alpha^*$ are given by the same formula as above, applied objectwise. Note in particular that we do not need to derive~$\alpha^*$.
\end{proof}

\begin{Rem}
\label{rem:no-iso}%
For $Z\oto{\beta}Y\oto{\alpha}X$, we have $(\beta\circ\alpha)^*=\alpha^*\circ\beta^*$ on the nose, not up to isomorphism. Hence $\Rep(-)$, $\Der(\Rep(-))$ and $\StRep(-)$ are \emph{strict} functors, not mere pseudo-functors. Of course, having only pseudo-functors would not be a big problem, since stack theory is tailored for pseudo-functors (or fibered categories), but this nice fact reduces the amount of technicalities below.
\end{Rem}

We now unfold the right adjoints $\alpha_*$ to the above functors $\alpha^*$.

\begin{Def}
\label{def:adj}%
Let $\alpha:Y\to X$ be a $G$-map. Let $W\in \Rep(Y)$ be a representation of~$Y$. As in Remark~\ref{rem:cash}, define a representation $\alpha_*W$ over~$X$ by
\begin{equation}
\label{eq:alpha_*}%
(\alpha_*W)_x=\prod_{y\in\alpha\inv(x)}W_y
\qquadtext{and}
(\alpha_*W)_g=\prod_{y\in\alpha\inv(x)}W_g\quad\textrm{(diagonally)}
\end{equation}
for every $x\in X$ and every $g\in G$. For a morphism $f:W\to W'$ over~$Y$, we define $\alpha_*f:\alpha_*W\to \alpha_*W'$ by $(\alpha_*f)_x=\prod\limits_{y\in\alpha\inv(x)}f_y$ (diagonally) for every~$x\in X$.
\end{Def}

\begin{Rem}
\label{rem:adj}%
The above product is simply a direct sum, since $\cat A$ is assumed additive. However, the product is the right concept here if we drop the assumption that our $G$-sets are finite. In that case, one should assume that $\cat A$ has small products.
\end{Rem}

\begin{Prop}
\label{prop:adj}%
Let $\alpha:Y\to X$ be a $G$-map. Then we have three adjunctions
$$
\xymatrix@R=1.5em{
\Rep(X) \ar@<-.3em>[d]_(.47){\Displ\alpha^*}
&& \Der(\Rep(X)) \ar@<-.3em>[d]_(.47){\Displ\alpha^*}
&& \StRep(X) \ar@<-.3em>[d]_(.47){\Displ\alpha^*}
\\
\Rep(Y) \ar@<-.3em>[u]_(.47){\Displ\alpha_*}
&& \Der(\Rep(Y)) \ar@<-.3em>[u]_(.47){\Displ\alpha_*}
&& \StRep(Y) \ar@<-.3em>[u]_(.47){\Displ\alpha_*}
}
$$
For the plain one, the unit $\eta\pp{\alpha}:\Id_{\Rep(X)}\to\alpha_*\alpha^*$ is given by the formula
$$
\begin{aligned}
(\eta\pp{\alpha}_V)_x\,:\ V_x & \too \prod_{y\in\alpha\inv(x)}\kern-.5em V_{x}\quad=(\alpha_*\alpha^* V)_x
\\
v & \longmapsto (v)_{y\in\alpha\inv(x)}\qquad \textrm{(constant)}
\end{aligned}
$$
for every $V\in\Rep(X)$ and $x\in X$, whereas the counit $\eps\pp{\alpha}:\alpha^*\alpha_*\to\Id_{\Rep(Y)}$ is given for every $W\in \Rep(Y)$ and  $y\in Y$ by
$$
\begin{aligned}
(\eps\pp{\alpha}_W)_y\,:\ (\alpha^*\alpha_*W)_y=\prod_{y'\in\alpha\inv(\alpha(y))}\kern-1em W_{y'} \ & \too W_{y}
\\
(w_{y'})_{y'\in\alpha\inv(\alpha(y))} & \longmapsto w_y\,.
\end{aligned}
$$
The derived one (supposing $\cat A$ abelian) and the stable one (supposing $\cat A=\kk\VVect$ for a field~$\kk$) are induced by the plain one objectwise.
\end{Prop}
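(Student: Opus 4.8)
The plan is to establish the plain adjunction first and then transport it to the derived and stable settings degreewise, using exactness throughout. For the plain case I would exploit the fact that $\alpha^{*}=\cat A^{G\ltimes\alpha}$ is precisely precomposition (restriction) along the functor $G\ltimes\alpha:G\ltimes Y\to G\ltimes X$, so its right adjoint is forced to be the pointwise right Kan extension along $G\ltimes\alpha$. The real content is then that this Kan extension is computed by formula~\eqref{eq:alpha_*}. Concretely I would analyse the comma category $(x\downarrow G\ltimes\alpha)$ whose objects are pairs $(y,g)$ with $y\in Y$, $g\in G$ and $gx=\alpha(y)$, and whose morphisms $(y,g)\to(y',g')$ are the $h\in G$ with $hy=y'$ and $hg=g'$. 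The assignment $(y,g)\mapsto g\inv y$ lands in the fibre $\alpha\inv(x)$ and exhibits this comma category as equivalent to the \emph{discrete} category on $\alpha\inv(x)$: every object is uniquely isomorphic to one of the form $(y_0,1)$ with $y_0\in\alpha\inv(x)$, and there are no morphisms between distinct such objects. Since $\alpha\inv(x)$ is finite, the defining limit collapses to the finite product $\prod_{y\in\alpha\inv(x)}W_y$, which both proves existence of the right adjoint and identifies it with $\alpha_{*}$.

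If one prefers to stay elementary --- and this is the form matching the explicit unit and counit in the statement --- I would instead verify the adjunction by hand in the language of Remark~\ref{rem:cash}. First I would check that the displayed $\eta\pp{\alpha}$ and $\eps\pp{\alpha}$ are genuine morphisms of representations, i.e.\ that their components are compatible with the structure isomorphisms $V_{g}$; for $\eps\pp{\alpha}$ this uses that the $G$-action on $\alpha_{*}W$ in~\eqref{eq:alpha_*} is the diagonal one, and for $\eta\pp{\alpha}$ that the constant-tuple map is $G$-equivariant. Naturality in $V$ and $W$ is immediate from the componentwise formulas. The substance is the two triangle identities $(\eps\pp{\alpha}\alpha^{*})\circ(\alpha^{*}\eta\pp{\alpha})=\id_{\alpha^{*}}$ and $(\alpha_{*}\eps\pp{\alpha})\circ(\eta\pp{\alpha}\alpha_{*})=\id_{\alpha_{*}}$, which I would unwind objectwise along each $y\in Y$ and each $x\in X$; in both cases they reduce to the tautology that inserting a constant tuple and then reading off one coordinate is the identity. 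This is index bookkeeping rather than a genuine difficulty.

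For the derived and stable statements I would deliberately \emph{not} derive anything --- that is exactly the trap flagged in Remark~\ref{rem:warn} --- but argue from exactness. Both functors are exact: $\alpha^{*}$ is exact as already noted, and $\alpha_{*}$ is exact because the product in~\eqref{eq:alpha_*} is a finite direct sum (the fibres being finite) and finite direct sums are exact in the abelian category $\cat A$. Hence each acts degreewise on complexes, inducing an adjoint pair first on the homotopy categories (the chain-level adjunction respects homotopies since the functors are additive) and then, since exact functors preserve quasi-isomorphisms, on the derived categories $\Der(\Rep(-))$; the unit and counit descend with them and the triangle identities, being objectwise, survive verbatim. For the stable case I would note that $\alpha^{*}$, as a left adjoint of the exact $\alpha_{*}$, preserves projectives (as recorded above), while $\alpha_{*}$, as a right adjoint of the exact $\alpha^{*}$, preserves injectives; in the Frobenius category $\Rep(X)$ these two classes coincide (Example~\ref{exa:BH}), so both functors preserve the ideal of maps factoring through projectives and therefore descend to $\StRep(-)=\Rep(-)/\Proj$, carrying the unit and counit along.

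The main obstacle is conceptual rather than computational: it is guaranteeing that the plain adjunction transports to the triangulated levels \emph{degreewise}, which rests entirely on the exactness of \emph{both} $\alpha^{*}$ and $\alpha_{*}$ together with their preservation of the projective--injective objects. Once these two facts are in place the passage is automatic. Everything else --- whether one runs the comma-category computation of the Kan extension or the hand verification of the triangle identities --- is routine finite bookkeeping, kept painless by the diagonality of the structure maps in~\eqref{eq:alpha_*} and the finiteness of all fibres.
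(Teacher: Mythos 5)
Your proposal is correct. Your second, elementary route is precisely the paper's proof: the paper simply says to verify the two unit--counit relations $(\eps\pp{\alpha}\alpha^{*})\circ(\alpha^{*}\eta\pp{\alpha})=\id_{\alpha^*}$ and $(\alpha_{*}\eps\pp{\alpha})\circ(\eta\pp{\alpha}\alpha_{*})=\id_{\alpha_*}$ (citing Mac\,Lane), and for the derived and stable cases it notes that $\alpha^{*}$ and $\alpha_{*}$ are exact and invokes Lemma~\ref{lem:der} --- whose proof is exactly your argument that the functors, units and counits act degreewise, and that a left (resp.\ right) adjoint of an exact functor preserves projectives (resp.\ injectives), these two classes coinciding in the Frobenius situation of Example~\ref{exa:BH}. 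Your first route, identifying $\alpha_{*}$ as the pointwise right Kan extension along $G\ltimes\alpha$ and checking that the comma category $(x\downarrow G\ltimes\alpha)$ is equivalent to the discrete category on the fibre $\alpha\inv(x)$ (so the defining limit collapses to the finite product of~\eqref{eq:alpha_*}), is a genuine alternative not taken by the paper; it buys a conceptual explanation of \emph{why} the formula for $\alpha_{*}$ is forced, with existence and uniqueness of the right adjoint coming for free. What the paper's hand verification buys, on the other hand, is the explicit unit and counit displayed in the Proposition, and these are not a cosmetic extra: they are used later, in the Beck--Chevalley computation of Lemma~\ref{lem:BC} and in constructing the retraction $\pi$ of the unit in Lemma~\ref{lem:comod}. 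So if you relied on the Kan-extension argument alone you would still need to unwind the resulting unit and counit and match them with the displayed formulas; your proposal correctly hedges this by including the elementary verification as well.
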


\begin{proof}
Verify the unit-counit relations, namely here $\eps\pp{\alpha}_{\alpha^*V}\circ \alpha^*(\eta\pp{\alpha}_V)=\id_{\alpha^* V}$ and $\alpha_*(\eps\pp{\alpha}_W)\circ \eta\pp{\alpha}_{\alpha_*W}=\id_{\alpha_*W}$. See~\cite[Chap.\,IV]{MacLane98} if necessary. For the derived and stable versions, the functors $\alpha^*$ and $\alpha_*$ are exact and we apply Lemma~\ref{lem:der}.
\end{proof}

%------------------------------------------------------------------------------
\medbreak
\section{Beck-Chevalley property and descent}
\label{se:descent}%
\medbreak
%------------------------------------------------------------------------------

Recall that $G$ is a finite group. In Section~\ref{se:rep}, we recalled the functor $\Rep(-):\Gsets\op\too\Add$ of plain representations, together with the derived $\Der(\Rep(-))$ and stable $\StRep(-)$ versions. Each of them is a \emph{presheaf} of categories on our site~$\Gsets$. These constructions did not involve the sipp topology of Section~\ref{se:top}. Saying that these presheaves of categories are \emph{stacks} heuristically means that they are \emph{sheaves} for the Grothendieck topology. To make this precise, we recall the basics of Grothendieck's descent formalism. A detailed reference is Vistoli~\cite{Vistoli05}.

\begin{Rem}
\label{rem:strict}%
The following definition is usually given for \emph{pseudo}-functors but we don't need this generality here as we have seen in Remark~\ref{rem:no-iso}. This happy simplification explains the word ``strict" below. Also, by Remark~\ref{rem:cover}, we restrict attention to covers $\cat U=\{U\to X\}$, \ie coverings with a single map.
\end{Rem}

\begin{Def}
\label{def:descent}%
Let $\cat U=\{U\oto{\alpha} X\}$ be a cover of $X$ in a site $\cat G$ with pull-backs and let $\cat D:\cat G\op\to \Cat$ be a (strict) contravariant functor from $\cat G$ to the category $\Cat$ of small categories. We denote by $U\pp{n}:=U\times_X\cdots\times_XU$ ($n$ factors). The \emph{(strict) descent category for the cover~$\cat U$}, denoted $\Desc_{\cat D}(\cat U)$, is defined as follows. Its objects are the \emph{(strict) descent data}, \ie pairs $(W,s)$ where $W$ is an object of $\cat D(U)$ and $s$ is a so-called \emph{gluing isomorphism}
$$
s\,:\ \pr_2^*W \isotoo \pr_1^* W
$$
in $\cat D(U\pp{2})$, where $\pr_i:U\pp{2}=U\times_XU\to U$, $i=1,2$, are the two projections, subject to the so-called \emph{cocycle condition}\,:
$$
\pr_{13}^*(s)=\pr_{12}^*(s)\circ\pr_{23}^*(s)
$$
in $\cat D(U\pp{3})$, where $\pr_{ij}\,:U\pp{3}=U\times_X U\times_X U\too U\pp{2}$ denotes projection on the $i^\textrm{th}$ and $j^\textrm{th}$ factors. A \emph{morphism of descent data} $f:(W,s)\to (W',s')$ is a morphism $f:W\to W'$ in $\cat D(U)$ such that $\pr_1^*(f)\circ s=s'\circ\pr_2^*(f)$ in $\cat D(U\pp{2})$.

There is a comparison functor $Q:\cat D(X)\too \Desc_{\cat D}(\cat U)$ mapping an object $V$ of $\cat D(X)$ to $\alpha^*V\in\cat D(U)$ together with the identity $s=\id:\pr_2^*\alpha^*V=\pr_1^*\alpha^*V$ as gluing isomorphism in $\cat D(U\pp{2})$. On morphisms, we set of course $Q(f)=\alpha^*(f)$.
\end{Def}

\begin{Def}
\label{def:stack}%
We say that the presheaf $\cat D:\cat G\op\to \Cat$ \emph{satisfies strict descent} with respect to the cover $\cat U$ of $X$ if the comparison functor $Q:\cat D(X)\to \Desc_{\cat D}(\cat U)$ is an equivalence of categories. We say that $\cat G$ is a \emph{strict stack} over the site~$\cat G$ if it satisfies strict descent with respect to every cover $\cat U$ of every object $X$ in~$\cat G$.
\end{Def}

\begin{Rem}
The descent property of $\cat D:\cat G\op\to\Cat$ with respect to a cover $\cat U$ of $X$ means two things\,: First $Q:\cat D(X)\to \Desc_{\cat D}(\cat U)$ is fully faithful and second it is essentially surjective. Full-faithfulness roughly says that morphisms in $\cat D(X)$ are sipp-sheaves and essential surjectivity says that every descent datum $(W,s)$ has a \emph{solution}, \ie an object $V\in \cat D(X)$ with an isomorphism $f:\alpha^*(V)\isoto W$ in $\cat D(U)$, compatible with the gluing isomorphisms on the ``intersection" $U\pp{2}$. Such a solution $V$ is then unique up to unique isomorphism in~$\cat D(X)$.
\end{Rem}

The following property is the key to reducing descent problems to comonadicity.

\begin{Def}
\label{def:BC}%
Let $\cat G$ be a category with pull-backs and let $\cat D:\cat G\op\to \Cat$ be a contravariant functor. Denote by $\alpha^*:\cat D(X)\to \cat D(Y)$ the functor associated to $\alpha:Y\to X$ in~$\cat G$. Suppose that each $\alpha^*$ has a right adjoint $\alpha_*:\cat D(Y)\to \cat D(X)$. Then, we say that $\cat D$ has the \emph{Beck-Chevalley property} if for every pull-back square
$$
\xymatrix@R=2em{
Y' \ar[r]^-{\beta'} \ar[d]_-{\alpha'}
& Y \ar[d]^-{\alpha}
\\ X' \ar[r]^-{\beta}
& X
}
$$
in~$\cat G$, we have a base-change formula, $\beta^*\alpha_* \simeq \alpha'_*{\beta'}^*$, more precisely the morphism
\begin{equation}
\label{eq:BC}%
\xymatrix@C=4em{
\beta^*\alpha_* \ar[r]^-{\eta\pp{\alpha'}}
& \alpha'_*{\alpha'}^*\beta^*\alpha_* \ar@{=}[r]^-{(\beta\alpha'=\alpha\beta')}
& \alpha'_*{\beta'}^*\alpha^*\alpha_* \ar[r]^-{\eps\pp{\alpha}}
& \alpha'_*{\beta'}^*}
\end{equation}
is an isomorphism. (We use that $\cat D(-)$ is a strict functor but again the notion makes sense for pseudo-functors, replacing the middle identity by an isomorphism.)
\end{Def}

\begin{Thm}[B\'enabou-Roubaud~\cite{BenabouRoubaud70}]
\label{thm:BR}%
Let $\cat G$ be a site with pull-backs and let $\cat D:\cat G\op\to \Cat$ be a functor with the Beck-Chevalley property. Let $\alpha:U\to X$ be a cover. The adjunction $\alpha^*:\cat D(X)\adj \cat D(U):\alpha_*$ defines a comonad $L:=\alpha^*\alpha_*\,:\ \cat D(U)\to \cat D(U)$ and we can compare $\cat D(X)$ with the category of $L$-comodules in~$\cat D(U)$, via an Eilenberg-Moore functor~$E$ as in Remark~\ref{rem:KE} (for the dual)\,:
$$
\xymatrix@C=4em@R=.7em{
\cat D(X) \quad \ar@<-.3em>[rd]_(.5){\Displ\alpha^*} \ar@<.3em>[rr]^-{\Displ E} \ar@/_1.5em/[rddd]_-{\Displ Q}
&& \quad L\CComod_{\cat D(U)} \ar@<-.5em>[ld]_(.7){\textrm{forget}}
\\
& \ \cat D(U) \ \ar@<-.3em>[lu]_(.3){\Displ\alpha_*} \ar[ru]_(.4){\textrm{free}}
  \ar@(ld,rd)[]_(.22){L}
\\
\\
& \ \Desc_{\cat D}(U\oto{\alpha}X) \ar@{-->}@/_1.5em/[ruuu]^-{\simeq}_-{\Displ \exists\,B}
}
$$
Then there exists an equivalence $B:\Desc_{\cat D}(U\oto{\alpha}X) \isotoo L\CComod_{\cat D(U)}$ such that $B\circ Q\cong E$. Consequently $\cat D$ satisfies descent with respect to $\alpha:U\to X$ ($Q$ is an equivalence) if and only if the adjunction $\alpha^*/\alpha_*$ is \emph{comonadic} ($E$ is an equivalence).
\end{Thm}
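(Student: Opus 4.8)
The plan is to transport the pull-back/push-forward data of the cover along the Beck-Chevalley isomorphisms and then to match, axiom by axiom, the descent data of Definition~\ref{def:descent} with the $L$-comodules. First I would apply the Beck-Chevalley property (Definition~\ref{def:BC}, formula~\eqref{eq:BC}) to the fundamental pull-back square with corners $U\pp2,U,U,X$, projections $\pr_1,\pr_2:U\pp2\to U$ and $\alpha$ on the two remaining edges. Taking $\alpha'=\pr_2$ and $\beta'=\pr_1$ produces a natural isomorphism $L=\alpha^*\alpha_*\simeq(\pr_2)_*(\pr_1)^*$. Composing with the adjunction $(\pr_2)^*\dashv(\pr_2)_*$ of Proposition~\ref{prop:adj} then gives, for each $W\in\cat D(U)$ and naturally in $W$, a bijection between coactions $\varrho:W\to L(W)$ and morphisms $s:\pr_2^*W\to\pr_1^*W$ in $\cat D(U\pp2)$. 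This ``mate'' correspondence is the backbone of the desired functor $B$.

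Next I would check that the two comodule axioms go over exactly to the two clauses of the descent condition. The counit axiom $\eps\pp{\alpha}_W\circ\varrho=\id_W$, with $\eps\pp{\alpha}$ the comonad counit (cf.\ Proposition~\ref{prop:adj}), should become the normalization $\Delta^*s=\id_W$ along the diagonal $\Delta:U\to U\pp2$ (legitimate since $\pr_1\Delta=\pr_2\Delta=\id_U$). The coassociativity axiom $(L\varrho)\circ\varrho=\delta_W\circ\varrho$, with $\delta=\alpha^*\eta\pp{\alpha}\alpha_*$, should become the cocycle condition $\pr_{13}^*s=\pr_{12}^*s\circ\pr_{23}^*s$ in $\cat D(U\pp3)$. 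To carry this out I must invoke Beck-Chevalley a second time, now for the pull-back squares realizing $U\pp3$ as a fibre product of two copies of $U\pp2$ over $U$, in order to identify $L^2(W)$ and the comultiplication $\delta_W$ with the pull-push composites indexed by the face maps $\pr_{ij}:U\pp3\to U\pp2$. I expect this coherence step — verifying that the several Beck-Chevalley isomorphisms paste together so that ``$(L\varrho)\circ\varrho=\delta_W\circ\varrho$'' becomes precisely the triangle over $U\pp3$ — to be the main obstacle; it is pure diagram bookkeeping, but it carries all the content of the theorem.

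Along the way I would record that these axioms already force $s$ to be invertible, so that the comodule picture and the descent-datum picture (where $s$ is an isomorphism by fiat) genuinely coincide. Indeed, pulling the cocycle identity back along the two $\cat G$-morphisms $U\pp2\to U\pp3$ with components $(\pr_1,\pr_2,\pr_1)$ and $(\pr_2,\pr_1,\pr_2)$ and using the normalization yields $s\circ\tau^*s=\id$ and $\tau^*s\circ s=\id$, where $\tau:U\pp2\to U\pp2$ is the swap $(\pr_2,\pr_1)$; hence $s$ is an isomorphism with $s^{-1}=\tau^*s$.

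It then remains to match morphisms and assemble $B$. By naturality of the mate correspondence, a morphism $f:W\to W'$ in $\cat D(U)$ is $L$-colinear if and only if $\pr_1^*f\circ s=s'\circ\pr_2^*f$, which is exactly the defining condition for a morphism of descent data; so $B$ is a well-defined isomorphism of categories, in particular an equivalence. Finally I would verify $B\circ Q\cong E$: for $V\in\cat D(X)$ one has $Q(V)=(\alpha^*V,\id)$ and, by the dual of Remark~\ref{rem:KE}, $E(V)=(\alpha^*V,\alpha^*\eta\pp{\alpha}_V)$, and a triangle-identity computation (the Beck-Chevalley transformation being itself built out of $\eta\pp{\alpha}$ and $\eps\pp{\alpha}$) shows that the mate of $\alpha^*\eta\pp{\alpha}_V$ is the identity gluing isomorphism $\pr_2^*\alpha^*V=\pr_1^*\alpha^*V$. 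The last assertion is then immediate: since $B$ is an equivalence with $B\circ Q\cong E$, the comparison $Q$ is an equivalence exactly when $E$ is, i.e.\ descent along $\alpha$ holds if and only if the adjunction $\alpha^*/\alpha_*$ is comonadic.
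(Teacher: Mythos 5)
Your proposal is correct and takes essentially the same approach as the paper, which explicitly declines to prove this classical result and only indicates the key construction: Beck--Chevalley applied to the pull-back square on $U\pp2, U, U, X$ identifies $L=\alpha^*\alpha_*$ with $(\pr_2)_*\pr_1^*$, and the mate of the gluing morphism $s$ under the $\pr_2^*/(\pr_2)_*$ adjunction is the coaction $W\to L(W)$ --- exactly your backbone, the paper likewise declaring that ``this assignment yields the functor $B$.'' Your further steps are the standard completion of that sketch; the only point you state in one direction but also need in the other is that a gluing \emph{isomorphism} satisfying the cocycle condition automatically satisfies the normalization $\Delta^*s=\id$ (pull the cocycle condition back along $(\pr_1,\pr_2,\pr_2):U\pp2\to U\pp3$ and cancel the invertible $s$, symmetrically to your own invertibility argument), without which $B$ is not yet defined on every descent datum.
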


\begin{Rem}
We shall not prove this classical result but, since \cite{BenabouRoubaud70} gives little detail, we quickly indicate why this holds. Consider the pull-back square
$$
\xymatrix@C=3em@R=2em{
\kern-2em U\pp{2}=U\times_X U \ar[r]^-{\pr_2} \ar[d]_-{\pr_1}
& U \ar[d]^-{\alpha}
\\
U \ar[r]^-{\alpha} & X
}
$$
and consider a gluing datum $(W,s)$ in the descent category $\Desc_{\cat D}(U\oto{\alpha}X)$ as in Definition~\ref{def:descent}. The gluing isomorphism $s:\pr_2^*W\isoto \pr_1^*W$ in $\cat D(U\pp{2})$ defines, by the $\pr_2{}^*/\pr_2{}_*$ adjunction, a morphism $W\to (\pr_2)_*\pr_1^*W$. By the Beck-Chevalley property applied to the above pull-back, we have $(\pr_2)_*\pr_1^*=\alpha^*\alpha_*$ which is the comonad~$L$. This new morphism $W\to L(W)$ makes $W$ into an $L$-comodule and this assignment yields the functor~$B$.
Note that the original source~\cite{BenabouRoubaud70} is stated dually, using the existence of \emph{left} adjoints to~$\alpha^*$ (somewhat unfortunately denoted $\alpha_*$ instead of the now common $\alpha_{\,!}$) and monads instead of comonads. Of course, our statement is a formal consequence of that one, via opposite categories.
\end{Rem}

We can now use the above technique to prove the fundamental result of the paper. We denote by $\bbZ_{(p)}=\SET{\frac{a}{b}\in\bbQ}{b\textrm{ is prime to }p}$ the local ring of~$\bbZ$ at~$p$.

\begin{Thm}
\label{thm:stack}%
Let $\cat A$ be an idempotent-complete additive category over $\bbZ_{(p)}$, \eg\ $\cat A=\kk\MMod$ for $\kk$ a (commutative local ring with residue) field of characteristic~$p$. Then we have strict stacks $\Gsets\op\to\Add$ in the sense of Definition~\ref{def:stack}\,:
\begen%[(a)]
\item
The functor of plain representations $\Rep(-)=\cat A^{G\ltimes-}$ of Definition~\ref{def:rep} is a strict stack on $\Gsets$ for the sipp topology (Definition~\ref{def:top}).
\smallbreak
\item
If $\cat A$ is moreover abelian, then the functor of derived categories $\Der(\Rep(-))$ is a strict stack on~$\Gsets$ for the sipp topology.
\smallbreak
\item
If $\cat A=\kk\VVect$ for a field $\kk$ of characteristic~$p$, then the functor of stable categories $\StRep(-)$ is a strict stack on~$\Gsets$ for the sipp topology.
\ened
\end{Thm}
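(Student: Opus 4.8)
The plan is to reduce the stack property, uniformly in the plain, derived and stable cases, to a comonadicity statement handled by the B\'enabou-Roubaud Theorem~\ref{thm:BR} together with (the dual of) Lemma~\ref{lem:monadicity}. First I would reduce to a single cover of an orbit. Using that $\Rep(-)$, $\Der(\Rep(-))$ and $\StRep(-)$ all send disjoint unions to products of categories (Example~\ref{exa:BH}), and that by Remark~\ref{rem:cover} every sipp-covering may be replaced by a single sipp-cover $U\to X$, I would argue exactly as in Proposition~\ref{prop:sheaf} (now at the level of categories rather than sets): restricting to the orbits of $X$ reduces us to $X=G/H$, and refining the cover reduces us to one of the form $\alpha:G/K\to G/H$ with $\Index{H}{K}$ prime to~$p$ (Example~\ref{exa:top}). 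Thus it suffices to prove descent for each such~$\alpha$.

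Second, I would establish the Beck-Chevalley property (Definition~\ref{def:BC}) for all three presheaves. This is a direct computation from the explicit formulas of Definition~\ref{def:adj} and Proposition~\ref{prop:adj}: for a pull-back square of $G$-sets, the projection $\beta'$ induces a bijection between the fibre of $\alpha'$ over a point $x'$ and the fibre of $\alpha$ over its image $\beta(x')$, and this bijection identifies the two products computing $\beta^*\alpha_*$ and $\alpha'_*{\beta'}^*$. Since the derived and stable functors $\alpha^*,\alpha_*$ are defined objectwise (Proposition~\ref{prop:adj}), the base-change isomorphism persists verbatim in $\Der(\Rep(-))$ and $\StRep(-)$. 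Note that this step uses no sipp hypothesis at all; it holds for every pull-back square in $\Gsets$.

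With Beck-Chevalley in hand, Theorem~\ref{thm:BR} reduces descent for $\alpha:G/K\to G/H$ to comonadicity of the adjunction $\alpha^*:\cat D(G/H)\adj\cat D(G/K):\alpha_*$, i.e.\ to the comparison functor $E:\cat D(G/H)\to L\CComod_{\cat D(G/K)}$ for the comonad $L=\alpha^*\alpha_*$ being an equivalence. To obtain this I would produce a natural retraction of the unit $\eta\pp{\alpha}:\Id\to\alpha_*\alpha^*$ and invoke the dual of Lemma~\ref{lem:monadicity}. Here the sipp condition finally enters: the fibre of $\alpha$ over any point of $G/H$ has cardinality $\Index{H}{K}$, which is prime to~$p$ and hence invertible in $\bbZ_{(p)}$, thus in~$\cat A$. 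Since $\eta\pp{\alpha}_V$ is the diagonal $V_x\to\prod_{y\in\alpha\inv(x)}V_x$ of Proposition~\ref{prop:adj}, the averaging $(v_y)_y\mapsto\Index{H}{K}\inv\sum_y v_y$ defines a retraction; it is a morphism of representations and natural in~$V$ precisely because $G$ permutes each fibre and the averaging is invariant under that permutation. This retraction lives already in the plain category, so Lemma~\ref{lem:der} propagates it to the derived and stable units, and the dual of Lemma~\ref{lem:monadicity} (all three $\cat D(-)$ being idempotent-complete, by the hypothesis on~$\cat A$ and standard facts for the triangulated cases) yields comonadicity, hence descent.

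The main obstacle is not a single hard idea but the bookkeeping of the first and third steps: one must check that the reduction to an orbit-cover $G/K\to G/H$ is legitimate for \emph{stacks} and not merely for sheaves of sets, and one must verify that the averaging map is genuinely a morphism in $\Rep(X)$, that is, compatible with the structure isomorphisms~$V_g$. Once the retraction is secured, comonadicity---and with it both the full-faithfulness and the essential surjectivity of the comparison functor~$Q$---follows formally and uniformly across the plain, derived and stable settings.
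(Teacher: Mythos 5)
Your steps 2--4 (Beck--Chevalley, B\'enabou--Roubaud, the averaging retraction of the unit, the dual of Lemma~\ref{lem:monadicity}, and Lemma~\ref{lem:der} for the derived and stable cases) coincide with the paper's proof. The gap is in your first step: you reduce the \emph{stack} property to the orbit covers $G/K\onto G/H$ with $\Index{H}{K}$ prime to~$p$, claiming this works ``exactly as in Proposition~\ref{prop:sheaf}''. That proposition concerns presheaves of \emph{sets}, and its refinement argument does not transport verbatim to categories. Definition~\ref{def:stack} demands descent for \emph{every} sipp-cover, and passing from an arbitrary cover $\alpha:U\to G/H$ to a refinement $G/K\hook U\to G/H$ is, for stacks, a genuine 2-categorical statement: given a descent datum $(W,s)$ for $U\to G/H$, solving it over $G/K$ produces $V$ with an isomorphism to $W$ \emph{over $G/K$ only}; to extend it to the required isomorphism $\alpha^*V\isoto W$ over all of~$U$ one must descend an isomorphism along the auxiliary pull-back cover $G/K\times_{G/H}U\to U$, checking its compatibility on the double fibre product via the cocycle condition for~$s$. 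Likewise, fullness of the comparison functor for $U\to G/H$ needs faithfulness of pull-back along that auxiliary cover, since the inclusion $G/K\hook U$ alone is \emph{not} a sipp-cover (it misses the other orbits of~$U$, so $\iota^*$ is not faithful). Note also that the orbit covers $G/K\to G/H$ do not form a pretopology (their pull-backs are disjoint unions of orbit covers, by Mackey), so one cannot simply invoke the principle that stacks may be checked on a generating pretopology. All of this is provable---it is the standard refinement lemma of descent theory---but it is real work, and it is exactly the bookkeeping the paper's proof is engineered to avoid.

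The fix is to reorder the reductions as the paper does, which makes your step 1 unnecessary: never reduce the descent property itself to orbit covers. Instead, prove comonadicity directly for an \emph{arbitrary} sipp-cover $\alpha:U\to X$ (this is Lemma~\ref{lem:comod}). The only thing to produce is a natural retraction of the unit $\eta\pp{\alpha}:\Id\to\alpha_*\alpha^*$, and this claim---unlike descent---localizes trivially: by additivity, $\Rep(X_1\sqcup X_2)\cong\Rep(X_1)\times\Rep(X_2)$, one reduces to $X$ an orbit; and since a natural transformation $\Id\to F_1\oplus\cdots\oplus F_n$ is retracted as soon as a \emph{single} component is, it suffices to exhibit your averaging retraction over just one orbit of~$U$ whose stabilizer has index prime to~$p$ in~$H$, which is precisely what the sipp condition supplies. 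With that reordering, your remaining steps go through and give the theorem for all sipp-covers at once.
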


\begin{proof}
The strategy of the proof is the following. First, in Lemma~\ref{lem:BC}, we prove the Beck-Chevalley property. This reduces descent to comonadicity, by Theorem~\ref{thm:BR}. Then, we prove comonadicity in Lemma~\ref{lem:comod}.

\begin{Lem}
\label{lem:BC}%
The functor $\Rep(-):\Gsets\op\to \Add$ of plain representations satisfies the Beck-Chevalley property of Definition~\ref{def:BC}. So does the derived one $\Der(\Rep(-)):\Gsets\op\to \Add$ when $\cat A$ is abelian and the stable one $\StRep(-):\Gsets\op\to \Add$ when $\cat A=\kk\VVect$ for $\kk$ a field.
\end{Lem}

\begin{proof}
We start with the plain one $\Rep(-)$. Consider a pull-back in $\Gsets$\,:
$$
\xymatrix@C=2em@R=1.5em{
Y' \ar[r]^-{\beta'} \ar[d]_-{\alpha'}
& Y \ar[d]^-{\alpha}
\\
X' \ar[r]^-{\beta} & X
}
$$
Note that being a pull-back implies that for every $x'\in X'$ we have a bijection
\begin{equation}
\label{eq:BC-trick}%
(\alpha')\inv(x')\isoto \alpha\inv(\beta\,x')\qquadtext{given by}y'\mapsto \beta'y'.
\end{equation}
We need to check that the morphism~\eqref{eq:BC} $\beta^*\alpha_*V\to \alpha'_*{\beta'}^*V$ is an isomorphism over~$X'$ for every representation $V$ over~$Y$. Unfolding the definitions of $\eta\pp{\alpha'}$ and of $\eps\pp{\alpha}$ given in Proposition~\ref{prop:adj}, we obtain for every $x'\in X'$ the morphisms
$$
\xymatrix@R=.5em{
(\beta^*\alpha_* V)_{x'}
 \ar[r]^-{\eta\pp{\alpha'}} \ar@{=}[d]
& (\alpha'_*{\alpha'}^*\beta^*\alpha_* V)_{x'}=(\alpha'_*{\beta'}^*\alpha^*\alpha_*  V)_{x'}
 \ar[r]^-{\eps\pp{\alpha}_{}} \ar@{=}[d]
& (\alpha'_*{\beta'}^* V)_{x'} \ar@{=}[d]
\\
\Displ\prod_{y\in\alpha\inv(\beta\,x')} V_y
& \Displ\prod_{y'\in {\alpha'}\inv(x')}\ \prod_{y\in\alpha\inv(\beta x')}V_y
& \Displ\prod_{y'\in {\alpha'}\inv(x')}V_{\beta'(y')}
\\
(v_y)_y \ar@{|->}[r]
& (v_y)_{y',y}
\quad,\quad (v_{y',y})_{y',y} \ar@{|->}[r]
& (v_{y',\beta'(y')})_{y'}
\\
(v_y)_y \ar@{|->}[rr]
&& (v_{\beta'(y')})_{y'}}
$$
and this composition is indeed an isomorphism by the bijection~\eqref{eq:BC-trick}. For the derived and stable ones, just observe that the units and counits are defined (degreewise) by the plain ones and a plain isomorphism trivially remains an isomorphism in the derived and stable categories. See Lemma~\ref{lem:der}.
\end{proof}

\begin{Lem}
\label{lem:unit}%
Let $F:\cat D\adj\cat D:G$ be an adjunction of idempotent-complete additive categories. Suppose that the unit $\eta: \Idcat{C}\to GF$ has a natural retraction. Then the adjunction is comonadic.
\end{Lem}

\begin{proof}
This is the dual of~Lemma~\ref{lem:monadicity}\,(c).
\end{proof}

So far, we did not use the assumptions about the prime~$p$ but here it comes\,:

\begin{Lem}
\label{lem:comod}%
Let $\alpha:U\to X$ be a cover in the sipp topology (Definition~\ref{def:top}). Then the adjunction $\alpha^*:\Rep(X)\adj \Rep(U):\alpha_*$ is comonadic. Again, the same is true for the derived and stable adjunctions, when they make sense.
\end{Lem}

\begin{proof}
Since $\cat A$ is idempotent-complete then so are $\Rep(X)=\cat A^{G\ltimes X}$ and $\Rep(U)=\cat A^{G\ltimes U}$. To apply Lemma~\ref{lem:unit}, we claim that the unit $\eta\pp{\alpha}:\Id_{\Rep(X)}\to\alpha_*\alpha^*$ admits a natural retraction, that is, there exists a natural transformation $\pi:\alpha_*\alpha^*\to \Id_{\Rep(X)}$ such that $\pi\circ\eta\pp{\alpha}=\id$. By additivity in the base~$X$, \ie $\Rep(X_1\sqcup X_2)=\Rep(X_1)\sqcup \Rep(X_2)$, we can assume that $X$ is an orbit, say $X=G/H$. By additivity in~$U$ and the fact that a natural transformation $\smallmatrice{\eta_1\\\vdots \\\eta_n}:\Id\to F_1\oplus F_2\oplus\cdots F_n$ is retracted as soon as one of the~$\eta_i$ is, it suffices to show the claim for the restriction of $\alpha$ to \emph{some} orbit of~$U$. Since $U\to X=G/H$ is a sipp-cover, there is one orbit of~$U$ whose stabilizer has index prime to~$p$ in~$H$, so we choose that one. We are now reduced to the case where $\alpha:G/K\to G/H$ is the projection associated to a subgroup $K\leq H$ of index prime to~$p$. Note that for every $x\in X=G/H$, we now have $|\alpha\inv(x)|=[H:K]$ which is prime to~$p$ hence invertible in~$\cat A$. We can therefore define $\pi_V:\alpha_*\alpha^*V\to V$ for ever $V\in\Rep(X)$ by the formula
$$
(\pi_V)_x\,:\ (\alpha_*\alpha^* V)_x=\prod_{u\in\alpha\inv(x)}\kern-.5em V_{x}\ \too \ V_x
\qquad\
(v_u)_{u\in\alpha\inv(x)}\mapsto \frac{1}{[H:K]}\sum_{u\in\alpha\inv(x)}\kern-1em v_u
$$
for every $x\in X$ and verify that this is a well-defined natural transformation with the wanted property $\pi\circ\eta\pp{\alpha}=\id$. The sum gives $G$-invariance of~$\pi_V$, using the bijection $g\cdot:\alpha\inv(x)\isoto \alpha\inv(gx)$. Hence the result for plain representations $\Rep(-)$.

For the derived version (recall~Remark~\ref{rem:warn}), we invoke Lemma~\ref{lem:der} to see that the derived and stable units are still retracted and then apply Lemma~\ref{lem:unit}.
\end{proof}

This finishes the proof of Theorem~\ref{thm:stack}, as explained before~\ref{lem:BC}.
\end{proof}

%------------------------------------------------------------------------------
%------------------------------------------------------------------------------
\goodbreak
\part{Applications}
\label{part:C}%
\bigbreak
%------------------------------------------------------------------------------
Recall that $G$ is assumed to be a finite group.
%------------------------------------------------------------------------------
\medbreak
\section{Taming Mackey formulas}
\label{se:tame}%
\medbreak
%------------------------------------------------------------------------------

Our Stack Theorem~\ref{thm:stack} for a sipp-cover $U\to X$ involves gluing isomorphisms over $U\pp 2$ and cocycle conditions over $U\pp 3$. Unfolding this data in the case of an elementary sipp-cover of the form $G/H\onto G/G$ for a subgroup $H$ of index prime to~$p$ hits the problems explained in Remark~\ref{rem:desc}, related to the Mackey formulas for $U\pp2=G/H\times G/H$ and the ``higher" Mackey formulas for $U\pp3=G/H\times G/H\times G/H$. Our strategy around this problem is to study $U\pp2$ by accepting \emph{all} intersections $H^g\cap H$ instead of just those for $g$ in a chosen set $S$ of representatives of~$\HGH$ and similarly for $U\pp3$. This creates excessive information which is harmless for $U\pp3$ and which can be trimmed for~$U\pp2$.

\begin{Not}
\label{not:bcd}%
Let $H,K\leq G$ be subgroups and let $g,g_1,g_2\in G$ be elements.
\begin{enumerate}[(a)]
\item
Suppose that $\con{g}K\leq H$. Consider the basic $G$-map, already used above
$$
\beta_g:G/K\to G/H
\qquadtext{given by}
[x]_K\mapsto [xg\inv]_H\,.
$$
Note that for $g=1$, \ie when $K\leq H$, the $G$-map $\beta_1$ is the projection $G/K\onto G/H$. There is a slight ambiguity  since notation $\beta_g$ does not display the subgroups $K$ and~$H$ but we will always make them clear in the sequel.
\smallbreak
\item
Mackey's formula~\eqref{eq:Mackey} involved $G$-maps that we now denote $\gamma_g:=\beta_g\times\beta_1$\,:
$$
\gamma_g\,:\ G/H[g]\too G/H\times G/H
\quadtext{,}
[x]_{H[g]}\mapsto ([xg\inv]_{H}\,,\,[x]_{H})
$$
using $\con{g}(H[g])\leq H$ and $\con{1}(H[g])\leq H$. Recall that $H[g]:=H^g\cap H$.
\smallbreak
\item
Recall that $H[g_2,g_1]:=H^{g_{2}g_{1}}\cap H^{g_1}\cap H$. Define $\delta_{g_2,g_1}:=\beta_{g_2g_1}\times \beta_{g_1}\times\beta_1$
$$
\begin{aligned}
\delta_{g_2,g_1}\,:\ G/\,H[g_2,g_1]
& \ \too \quad (G/H)\times (G/H)\times (G/H)
\\
[x]_{H[g_2,g_1]} & \ \longmapsto \ \big([x(g_2g_1)\inv]_{H}\,,\,[x\,g_1\inv]_{H}\,,\,[x]_{H}\big)
\end{aligned}
$$
using $\con{g_2g_1}(H[g_2,g_1])\leq H$ and $\con{g_1}(H[g_2,g_1])\leq H$ and $\con{1}(H[g_2,g_1])\leq H$.
\end{enumerate}
\end{Not}

\begin{Lem}
\label{lem:12}%
With the above notation, we have for every $g\in G\geq H$
\begin{equation}
\label{eq:12}%
\pr_1\circ \gamma_g=\beta_g\qquadtext{and}\pr_2\circ \gamma_g=\beta_1
\end{equation}
where $\pr_1,\pr_2\,:\ G/H\times G/H\to G/H$ are the projections. Also, for every $h\in H$
\begin{equation}
\label{eq:bc}%
\gamma_{hg}=\gamma_g\qquadtext{and}\gamma_{gh}=\gamma_{g}\,\beta_{h}
\end{equation}
as morphisms from $G/H[g]$ to $(G/H)^2$ and from $G/H[gh]$ to $(G/H)^2$, respectively.
\end{Lem}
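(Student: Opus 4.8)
The plan is to verify all four identities by direct computation from the explicit coset formulas in Notation~\ref{not:bcd}; the only genuine care needed is the bookkeeping of source and target subgroups, so that every $\beta$ and $\gamma$ in sight is well-defined. Throughout I will use the single simplification rule that right-multiplying a coset $[x]_H=xH$ by an element of $H$ leaves it unchanged, \ie $[xh\inv]_H=[x]_H$ whenever $h\in H$.

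For~\eqref{eq:12} I would observe that, by definition, $\gamma_g=\beta_g\times\beta_1$ as a $G$-map into the product $G/H\times G/H$, so composing with the two projections tautologically recovers the two factors. Concretely, $\gamma_g([x]_{H[g]})=([xg\inv]_H,[x]_H)$, whose two entries are exactly $\beta_g([x]_{H[g]})$ and $\beta_1([x]_{H[g]})$; hence $\pr_1\circ\gamma_g=\beta_g$ and $\pr_2\circ\gamma_g=\beta_1$.

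For the first identity in~\eqref{eq:bc} I would first check that the two sides share the same source, computing $H[hg]=H^{hg}\cap H=g\inv(h\inv Hh)g\cap H=g\inv Hg\cap H=H[g]$ since $h\in H$. Then on a class $[x]_{H[g]}$ I evaluate $\gamma_{hg}([x]_{H[g]})=([x(hg)\inv]_H,[x]_H)=([xg\inv h\inv]_H,[x]_H)$ and simplify the first coordinate via $h\inv H=H$ to obtain $[xg\inv h\inv]_H=[xg\inv]_H$, which is precisely $\gamma_g([x]_{H[g]})$. Thus $\gamma_{hg}=\gamma_g$.

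The second identity in~\eqref{eq:bc} is where I expect the only real work, since here $\beta_h$ is not a projection but the map $G/H[gh]\to G/H[g]$, $[x]_{H[gh]}\mapsto[xh\inv]_{H[g]}$, so I must first confirm it is well-defined, \ie that $\con{h}(H[gh])\leq H[g]$. Writing $H[gh]=(H^g)^h\cap H=h\inv H^g h\cap H$, any element $k$ of it satisfies $hkh\inv\in H^g$ (from $k\in h\inv H^g h$) and $hkh\inv\in H$ (from $k,h\in H$), so $\con{h}k\in H^g\cap H=H[g]$, as needed. Granting this, I compute both composites on $[x]_{H[gh]}$: the left side gives $\gamma_{gh}([x]_{H[gh]})=([xh\inv g\inv]_H,[x]_H)$, while the right side gives $\gamma_g(\beta_h([x]_{H[gh]}))=\gamma_g([xh\inv]_{H[g]})=([xh\inv g\inv]_H,[xh\inv]_H)$. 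The first coordinates agree verbatim, and the second coordinates agree because $[xh\inv]_H=[x]_H$. Hence $\gamma_{gh}=\gamma_g\beta_h$, completing the proof. The main obstacle, such as it is, is precisely this well-definedness check for $\beta_h$ together with the matching of source subgroups ($H[hg]=H[g]$); every remaining step is forced by the defining formulas.
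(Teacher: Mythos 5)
Your proof is correct and takes essentially the same approach as the paper, whose entire proof reads ``Direct from the above definitions.'' Your extra care --- checking $H[hg]=H[g]$ so the two sides of $\gamma_{hg}=\gamma_g$ share a source, and verifying $\con{h}(H[gh])\leq H[g]$ so that $\beta_h:G/H[gh]\to G/H[g]$ is well-defined --- simply makes explicit the details the paper leaves to the reader.
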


\begin{proof}
Direct from the above definitions.
\end{proof}

\begin{Lem}
\label{lem:012}%
Let $H\leq G$ be a subgroup and $g_1,g_2\in G$. Then, using Notation~\ref{not:bcd}, the following three diagrams of $G$-sets commute ``separately" (\ie using on each side only the left, only the middle or only the right vertical maps, respectively)
$$
\xymatrix@C=3em@R=1.5em{
\kern3em G/\,H[g_2,g_1] \ar[r]^-{\Displ \delta_{g_2,g_1}}\kern3em
 \ar@<-4em>[d]_-{\Displ\beta_{g_1}} \ar[d]_-{\Displ\beta_{1}} \ar@<4em>[d]_-{\Displ\beta_{1}}
 \ar[d]^-{}
& (G/H) \times (G/H) \times (G/H)
 \ar@<-3.5em>[dd]_-{\Displ \pr_{12}} \ar[dd]_-{\Displ \pr_{13}} \ar@<3.5em>[dd]_-{\Displ \pr_{23}}
\\
G/H[g_2]\ ,\  G/H[g_2g_1]\ ,\ G/H[g_1]
 \ar@<-4em>[d]_-{\Displ\iota_{g_2}} \ar[d]_-{\Displ\iota_{g_2g_1}} \ar@<4em>[d]_-{\Displ\iota_{g_1}}
\\
\kern3em \coprod_{g\in G}\kern1em G/H[g]\kern3em
 \ar[r]^-{\coprod_{g}\Displ \gamma_g}
& \kern1em (G/H) \ \times \ (G/H)\,. \kern.5em
}
$$
Here $\iota_g$ denotes the inclusion into the term indexed by~$g\in G$.
\end{Lem}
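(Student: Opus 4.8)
The plan is to reduce all three commutativities to a single multiplicativity property of the basic maps $\beta_g$ of Notation~\ref{not:bcd}. First I would record the composition rule
$$
\beta_{g'}\circ\beta_{g}=\beta_{g'g}\,,
$$
valid whenever the source/target conditions $\con{g}{K}\le H$ and $\con{g'}{H}\le L$ hold. This is immediate from the defining formula $[x]_K\mapsto[xg\inv]_H$, since tracing a coset gives $[x]_K\mapsto[xg\inv]_H\mapsto[xg\inv g'\inv]_L=[x(g'g)\inv]_L$. In particular $\beta_1$ is the canonical projection attached to any inclusion of subgroups, so $\beta_1\circ\beta_g=\beta_g$ and $\beta_1\circ\beta_1=\beta_1$.

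Next I would observe that by Notation~\ref{not:bcd} the two horizontal maps are literally products of such $\beta$'s, namely $\gamma_g=\beta_g\times\beta_1$ and $\delta_{g_2,g_1}=\beta_{g_2g_1}\times\beta_{g_1}\times\beta_1$. Consequently the three right-hand projections compose with $\delta_{g_2,g_1}$ to give
$$
\pr_{12}\circ\delta_{g_2,g_1}=\beta_{g_2g_1}\times\beta_{g_1},\qquad \pr_{13}\circ\delta_{g_2,g_1}=\beta_{g_2g_1}\times\beta_1,\qquad \pr_{23}\circ\delta_{g_2,g_1}=\beta_{g_1}\times\beta_1,
$$
all read as $G$-maps $G/H[g_2,g_1]\to(G/H)^2$. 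With this in place, each of the three diagrams is checked by a one-line application of the composition rule, the inclusions $\iota_{(-)}$ being harmless. For the left column the composite is $\gamma_{g_2}\circ\beta_{g_1}=(\beta_{g_2}\circ\beta_{g_1})\times(\beta_1\circ\beta_{g_1})=\beta_{g_2g_1}\times\beta_{g_1}=\pr_{12}\circ\delta_{g_2,g_1}$; for the middle column $\gamma_{g_2g_1}\circ\beta_1=(\beta_{g_2g_1}\circ\beta_1)\times(\beta_1\circ\beta_1)=\beta_{g_2g_1}\times\beta_1=\pr_{13}\circ\delta_{g_2,g_1}$; and for the right column $\gamma_{g_1}\circ\beta_1=\beta_{g_1}\times\beta_1=\pr_{23}\circ\delta_{g_2,g_1}$. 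These are of course consistent with the relations already recorded in~\eqref{eq:12} and~\eqref{eq:bc}.

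The only genuine bookkeeping — and the step I would treat most carefully — is confirming that every $\beta$ written above is in fact defined, i.e.\ that the required conjugation-containments hold. These all follow by conjugating the intersection $H[g_2,g_1]=H^{g_2g_1}\cap H^{g_1}\cap H$. For the left-hand map one computes $\con{g_1}{(H[g_2,g_1])}=H^{g_2}\cap H\cap\con{g_1}H\le H^{g_2}\cap H=H[g_2]$, which licenses $\beta_{g_1}:G/H[g_2,g_1]\to G/H[g_2]$, and similarly $\con{g_2}{(H[g_2])}=H\cap\con{g_2}H\le H$ licenses the subsequent $\beta_{g_2}$. The two vertical $\beta_1$'s are licensed by the plain inclusions $H[g_2,g_1]\le H^{g_2g_1}\cap H=H[g_2g_1]$ and $H[g_2,g_1]\le H^{g_1}\cap H=H[g_1]$. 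Once these containments are in hand, Steps~1--3 deliver all three commutativities simultaneously, with no residual Mackey-type choices entering.
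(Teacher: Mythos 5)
Your proof is correct and in substance the same as the paper's: the paper simply traces a coset $[x]$ through both composites of each diagram (for the left-hand one, $[x]\mapsto\big([x(g_2g_1)\inv]_H,[xg_1\inv]_H\big)$ both ways), which is exactly the computation that your composition rule $\beta_{g'}\circ\beta_{g}=\beta_{g'g}$ packages once and for all. Your organization --- one multiplicativity lemma plus the product decompositions $\gamma_g=\beta_g\times\beta_1$ and $\delta_{g_2,g_1}=\beta_{g_2g_1}\times\beta_{g_1}\times\beta_1$, together with the conjugation-containment checks --- is just a slightly more systematic write-up of the same element-wise verification.
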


\begin{proof}
For instance, the two compositions in the ``left-maps diagram" are\,:
$$
\xymatrix@R=1.5em{
[x]
 \ar@{|->}[r]^-{\delta_{g_2,g_1}} \ar@{|->}[d]^-{\iota_{g_2}\beta_{g_1}}
& \big([x\,(g_2g_1)\inv],[xg_1\inv],[x]\big)\kern-2em
 \ar@<2em>@{|->}[d]^-{\pr_{12}}
\\
[xg_1\inv]\textrm{ in term }H[g_2] \ar@{|->}[r]^-{\gamma_{g_2}}
& \big([xg_1\inv g_2\inv],[xg_1\inv]\big)=\big([x\,(g_2g_1)\inv],[xg_1\inv]\big)\,.
}
$$
The other two verifications are similarly direct from the definitions.
\end{proof}

We now unfold descent of Section~\ref{se:descent}. Note that this statement holds for any strict sipp-stack and avoids all non-canonical choices from Mackey formulas.

\begin{Thm}
\label{thm:tame}%
Let $\cat D:\Gsets\op\to \Add$ be a strict stack on~$\Gsets$ for the sipp topology (\eg\ those of Theorem~\ref{thm:stack}). Let $H\leq G$ be a subgroup of index prime to~$p$. Let $\alpha:G/H\onto G/G$ the associated $G$-map. Recall Notation~\ref{not:bcd}. Then, we have\,:
\begen
\smallbreak
\item
Let $V,V'\in\cat D(G/G)$. Then $f_0\mapsto \alpha^*(f_0)$ defines a bijection between the morphisms $f_0:V\to V'$ in $\cat D(G/G)$  and those morphisms $f:\alpha^*V\to \alpha^*V'$ in $\cat D(G/H)$ such that $\beta_1^*(f)=\beta_g^*(f)$ in $\cat D(G/H[g])$ for every $g\in G$.
\smallbreak
\item
Suppose given an object $W\in \cat D(G/H)$ and isomorphisms $s_g:\beta_1^*W\isoto\beta_g^*W$ in $\cat D(G/H[g])$ for every $g\in G$, for the $G$-maps $\beta_1,\beta_g:G/H[g]\to G/H$, and satisfying properties~(i) and~(ii) below\,:
\begen[(i)]
\smallbreak
\item
\label{it:H}%
For every $h\in H$ (in which case $H[h]=H$ and $\beta_1=\beta_h=\id_{G/H}$ and therefore $\beta_1^*W=W=\beta_h^*W$), assume that $s_h=\id_W$ in $\cat D(G/H)$\,.
\smallbreak
\item
\label{it:coc}%
For every $g_1,g_2\in G$, assume the following equality in $\cat D(G/\,H[g_2,g_1])$
$$
\beta_1^*(s_{g_2g_1})=\beta_{g_1}^*(s_{g_2})\circ \beta_1^*(s_{g_1})
$$
for $\beta_{1}:G/H[g_2,g_1]\to G/H[g_2g_1]$, $\beta_{g_1}:G/H[g_2,g_1]\to G/H[g_2]$ and $\beta_{1}:G/H[g_2,g_1]\to G/H[g_1]$ respectively.
\ened
Then there exists an object $V\in\cat D(G/G)$ and an isomorphism $f:\alpha^*V\isoto W$ in $\cat D(G/H)$ such that $\beta_g^*(f)=s_g\circ  \beta_1^*(f)$ in $\cat D(G/H[g])$ for every $g\in G$. Moreover, the pair $(V,f)$ is unique up to unique isomorphism of such pairs.
\ened
\end{Thm}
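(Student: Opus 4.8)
The plan is to reduce everything to the genuine descent already granted by Theorem~\ref{thm:stack} for the single sipp-cover $\cat U=\{\alpha:G/H\onto G/G\}$ (legitimate since $\Index GH$ is prime to $p$), and to translate the honest descent data of Definition~\ref{def:descent} into the ``tame'' data $(W,(s_g)_{g\in G})$ of the statement. The bridge is the Mackey formula~\eqref{eq:Mackey}: the double fibre product $U\pp2=(G/H)\times_{G/G}(G/H)=(G/H)^2$ decomposes as $\coprod_{t\in S}G/H[t]$ along the maps $\gamma_t$, for $S$ a set of representatives of $\HGH$; iterating, every $G$-orbit of $U\pp3=(G/H)^3$ is the image of some $\delta_{g_2,g_1}$. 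First I would record, via Lemma~\ref{lem:12} and Lemma~\ref{lem:012}, the dictionary $\gamma_g^*\pr_1^*=\beta_g^*$ and $\gamma_g^*\pr_2^*=\beta_1^*$ on $U\pp2$, and $\delta_{g_2,g_1}^*\pr_{13}^*=\beta_1^*\gamma_{g_2g_1}^*$, $\delta_{g_2,g_1}^*\pr_{12}^*=\beta_{g_1}^*\gamma_{g_2}^*$, $\delta_{g_2,g_1}^*\pr_{23}^*=\beta_1^*\gamma_{g_1}^*$ on $U\pp3$.

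For part~(a): since $Q:\cat D(G/G)\to\Desc_{\cat D}(\cat U)$ is fully faithful (it is even an equivalence by Theorem~\ref{thm:stack}), morphisms $V\to V'$ correspond bijectively to morphisms of descent data $(\alpha^*V,\id)\to(\alpha^*V',\id)$, \ie to $f:\alpha^*V\to\alpha^*V'$ with $\pr_1^*f=\pr_2^*f$ on $U\pp2$; restricting to the summands $G/H[t]$ via the dictionary, this is exactly $\beta_t^*f=\beta_1^*f$ for all $t\in S$. On the other hand every $f=\alpha^*f_0$ satisfies $\beta_g^*f=\beta_1^*f$ for all $g\in G$ trivially, because $\alpha\beta_g$ and $\alpha\beta_1$ are both the unique $G$-map from $G/H[g]$ to the terminal object $G/G$, hence equal. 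The inclusions $\{\alpha^*f_0\}\subseteq\{f:\beta_g^*f=\beta_1^*f\ \forall g\}\subseteq\{f:\beta_t^*f=\beta_1^*f\ \forall t\in S\}=\{\alpha^*f_0\}$ then collapse, giving the bijection of~(a).

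For the existence in~(b): from a tame family $(s_g)$ satisfying~(i) and~(ii) I would first extract the bi-equivariance matching Lemma~\ref{lem:12}. Specializing~(ii) to $g_2=h\in H$ (so $H[h,g]=H[g]$ and $s_h=\id$ by~(i), the relevant $\beta_1$'s being identities) gives $s_{hg}=s_g$; specializing to $g_1=h\in H$ gives $s_{gh}=\beta_h^*(s_g)$. Now define $s:\pr_2^*W\isoto\pr_1^*W$ on $U\pp2=\coprod_{t\in S}G/H[t]$ by declaring $\gamma_t^*s=s_t$ on each summand (legitimate since $\cat D$ turns the coproduct into a product). Writing an arbitrary $g=h_1th_2$ with $t\in S$ and using $\gamma_g=\gamma_t\beta_{h_2}$ together with the two relations just obtained, one checks $\gamma_g^*s=\beta_{h_2}^*(s_t)=s_g$ for every $g\in G$, so the definition is independent of $S$ and recovers all the $s_g$. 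The cocycle condition $\pr_{13}^*s=\pr_{12}^*s\circ\pr_{23}^*s$ may then be verified orbit-by-orbit on $U\pp3$: pulling back along each $\delta_{g_2,g_1}$ and applying the dictionary turns it into precisely $\beta_1^*(s_{g_2g_1})=\beta_{g_1}^*(s_{g_2})\circ\beta_1^*(s_{g_1})$, which is hypothesis~(ii). Thus $(W,s)$ is an object of $\Desc_{\cat D}(\cat U)$, and essential surjectivity of $Q$ yields $V\in\cat D(G/G)$ with an isomorphism $f:\alpha^*V\isoto W$ carrying the trivial gluing to $s$, \ie $s=\pr_1^*f\circ(\pr_2^*f)\inv$; pulling back along $\gamma_g$ gives $\beta_g^*(f)=s_g\circ\beta_1^*(f)$, as required. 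Uniqueness up to unique isomorphism follows by feeding two solutions $(V,f)$, $(V',f')$ into part~(a): $(f')\inv\circ f$ is a descent morphism $(\alpha^*V,\id)\to(\alpha^*V',\id)$, hence $\alpha^*\phi$ for a unique $\phi:V\isoto V'$.

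The hard part is the middle bookkeeping of~(b): showing that the summand-wise $s$ built from a choice of $S$ is forced by~(i)--(ii) to satisfy $\gamma_g^*s=s_g$ for \emph{all} $g$, not merely for $t\in S$ --- equivalently, that~(i)--(ii) encode exactly the $H$-bi-equivariance of the $\gamma_g$ recorded in Lemma~\ref{lem:12} --- and that the $\delta_{g_2,g_1}$ genuinely exhaust the orbits of $U\pp3$, so that~(ii) becomes equivalent to the full cocycle condition. Everything else is a formal consequence of Theorem~\ref{thm:stack} and the Mackey dictionary.
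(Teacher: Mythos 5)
Your proposal is correct and follows essentially the same route as the paper's own proof: descent for the sipp-cover $G/H\onto G/G$ granted by the stack hypothesis, the Mackey dictionary of Lemmas~\ref{lem:12} and~\ref{lem:012}, the derivation of $s_{hg}=s_g$ and $s_{gh}=\beta_h^*(s_g)$ from (i)--(ii), the construction of a single gluing isomorphism $s$ with $\gamma_g^*(s)=s_g$ for all $g$, and the verification of the cocycle condition by pulling back along the maps $\delta_{g_2,g_1}$ (the paper phrases this as faithfulness of the functor $F\pp{3}$, you phrase it orbit-by-orbit -- same argument). The only cosmetic difference is in part~(a), where you sandwich the image of $\alpha^*$ between the ``all $g\in G$'' condition and the ``$t\in S$'' condition using $\alpha\beta_g=\alpha\beta_1$, whereas the paper deduces the same equivalence directly from faithfulness of $F\pp{2}$.
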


\begin{proof}
Since the index $\Index{G}{H}$ is prime to~$p$, we have a sipp-cover $U:=G/H\,\overset{\alpha}\onto$ $\, G/G=:X$. Since we assume that $\cat D$ is a stack, we have an equivalence of categories
$$
\cat D(X)\isotoo \Desc_{\cat D}(U\to X)\,.
$$
We want to describe the right-hand category. Recall from Definition~\ref{def:descent} that its objects are pairs $(W,s)$ where $W\in \cat D(U)=\cat D(G/H)$ and $s:\pr_2^*W\isoto\pr_1^*W$ is an isomorphism in $\cat D(U\pp2)=\cat D(G/H\times G/H)$ satisfying the cocycle relation
\begin{equation}
\label{eq:coc-UX}%
\pr_{13}^*(s)=\pr_{12}^*(s)\circ\pr_{23}^*(s)
\end{equation}
in $\cat D(U\pp3)=\cat D(G/H\times G/H\times G/H)$. Using Notation~\ref{not:bcd}, consider the functor
\begin{equation}
\label{eq:F2}%
F\pp2:=\prod_{g\in G}\gamma_g^*\,:\ \cat D\big((G/H)^2\big)\too \prod_{g\in G}\cat D(G/H[g])
\end{equation}
induced by all the $G$-maps $\gamma_g:G/H[g]\to (G/H)^2$. Similarly, the $G$-maps $\delta_{g_2,g_1}:G/\,H[g_2,g_1]\to (G/H)^3$ induce a functor that we denote
\begin{equation}
\label{eq:F3}%
F\pp3:=\prod_{g_2,g_1\in G}\delta_{g_2,g_1}^*\,:\ \cat D\big((G/H)^3\big)\too \prod_{g_2,g_1\in G}\cat D(G/\,H[g_2,g_1])\,.
\end{equation}
Choosing a representative set $S\subset G$ for~$\HGH$ and post-composing the functor~$F\pp2$ with the projection $\pr_S:\prod_{g\in G}...\too\prod_{g\in S}...$ we obtain an equivalence, by the Mackey formula~\eqref{eq:Mackey}. Similarly, if we choose moreover, for every $t\in S$, a representative set $S_t\subset G$ for $\doublequot{(H^t\cap H)}{G}{H}$ and if we post-compose $F\pp3$ with the projection $\prod_{g_2,g_1\in G}\,...\too\prod_{g_2\in S,\,g_1\in S_t}\,...$ we also obtain an equivalence, by two layers of Mackey formulas. In particular, both functors $F\pp2$ and $F\pp3$ are \emph{faithful}. For $F\pp2$ we can describe the image on morphisms more precisely (``trimming")\,:

\smallbreak
\noindent\textbf{Claim~A}\,: \textit{Given two objects $W'$ and $W''$ in the source category $\cat D((G/H)^2)$ of $F\pp2$, a morphism $(f_g)_{g\in G}$ from $F\pp2(W')$ to $F\pp2(W'')$ in the category $\prod_{g\in G}\cat D(G/H[g])$ belongs to the image of~$F\pp2$ if and only we have for every $h\in H$ and $g\in G$ that
\begin{align}
\label{eq:f_g}%
f_{hg}=f_{g}\qquadtext{and}f_{gh}=\beta_h^*(f_g)\,.
\end{align}}%
Here $\beta_h:G/H[gh]\to G/H[g]$ is as in Notation~\ref{not:bcd} using the relation $\con{h}(H[gh])\leq H[g]$. These conditions are necessary by~\eqref{eq:bc}. Conversely, assume that $(f_g)_{g\in G}$ satisfies~\eqref{eq:f_g}. Since the composition of $F\pp2$ with the projection onto those factors indexed by a representative set $S\subset G$ of~$\HGH$ is an equivalence (Mackey formula), we can find $f:W'\to W''$ in $\cat D((G/H)^2)$ such that at least
\begin{equation}
\label{eq:f_t}%
f_t=\gamma_t^*(f)\qquad\textrm{for all }t\in S\,.
\end{equation}
Let now $g\in G$ be arbitrary. We need to show that $f_g=\gamma_g^*(f)$ as well, which gives $F\pp2(f)=(f_g)_{g\in G}$. There exists $h_1,h_2\in H$ and $t\in S$ with $g=h_1\,th_2$ and then
$$
f_g=f_{th_2}=\beta_{h_2}^*(f_t)=\beta_{h_2}^*\gamma_t^*(f)=\gamma_{th_2}^*(f)=\gamma_g^*(f)
$$
using in turn\,: \eqref{eq:f_g}, \eqref{eq:f_t}, and the fact that $\gamma_{t}\beta_{h_2}=\gamma_{th_2}=\gamma_{h_1th_2}=\gamma_g$ by~\eqref{eq:bc} again. This proves Claim~A.

Let us prove~(a). The property that the functor $\cat D(X)\to \Desc_{\cat D}(U\oto{\alpha} X)$ is fully faithful means that for every $V,V'\in \cat D(X)$, the map $f_0\mapsto \alpha^*(f_0)$ is a bijection between $\Mor_{\cat D(X)}(V,V')$ and the set of those morphisms $f:\alpha^*V\to \alpha^*V'$ in $\cat D(G/H)$ such that $\pr_2^*(f)=\pr_1^*(f)$ in $\cat D((G/H)^2)$. Since $F\pp2$ is faithful, the later is equivalent to $F\pp2(\pr_2^*(f))=F\pp2(\pr_1^*(f))$. By~\eqref{eq:F2}, $F\pp2(\pr_2^*(f))=(\gamma_g^*\pr_2^*(f))_{g\in G}=(\beta_1^*(f))_{g\in G}$ using $\pr_2\gamma_g=\beta_1$ from~\eqref{eq:12}. Similarly, $F\pp2(\pr_1^*(f))=(\gamma_g^*\pr_1^*(f))_{g\in G}=(\beta_g^*(f))_{g\in G}$. Therefore $\pr_2^*(f)=\pr_1^*(f)$ if and only if $\beta_1^*(f)=\beta_g^*(f)$ for all $g\in G$, which is the condition of~(a).

Let us now prove the more juicy part~(b). Assume given the object $W\in\cat D(G/H)$ and the isomorphisms $s_g:\beta_1^*W\isoto \beta_g^*W$ in $\cat D(G/H[g])$, satisfying~(i) and~(ii). Uniqueness of $(V,f)$ up to unique isomorphism will follow from~(a), so we only need to prove existence of $V$ and $f:\alpha^*V\isoto W$ as announced.

\smallbreak
\noindent\textbf{Claim~B}\,: \textit{For every $h\in H$ and $g\in G$, we have
\begin{align}
\label{eq:claim2}%
& s_{hg}=s_{g} \qquadtext{ in }\cat D(G/H[hg])=\cat D(G/H[g])\qquad\textrm{and}
\\
\label{eq:claim1}%
& s_{gh}=\beta_h^*(s_{g}) \qquadtext{ in }\cat D(G/H[gh])\quadtext{for}\beta_h:G/H[gh]\to G/H[g]\,.
\end{align}
}%
To prove~\eqref{eq:claim1}, use condition~(ii) for $g_2=g$ and $g_1=h$, the fact that $s_h=\id$ by~(i) and finally that $\beta_1:G/\,H[g_2,g_1]\to G/H[g_2g_1]$ is the identity in that case. Similarly, \eqref{eq:claim2} follows from~(ii) for  $g_2=h$ and $g_1=g$, the same facts ($s_h=\id$ and $\beta_1=\id$) as above and the additional fact that $\beta_1: G/\,H[g_2,g_1]\to G/H[g_1]$ is also the identity in this case. This proves Claim~B.

Consider the two objects $W',W''\in\cat D((G/H)^2)$ given by $W'=\pr_2^*(W)$ and $W''=\pr_1^*(W)$. By~\eqref{eq:F2} and~\eqref{eq:12} we have $F\pp2(W')=(\gamma_g^*\pr_2^*W)_{g\in G}=(\beta_1^*W)_{g\in G}$ and $F\pp2(W'')=(\gamma_g^*\pr_1^*W)_{g\in G}=(\beta_g^*W)_{g\in G}$. By Claim~B, the morphism $(s_g)_{g\in G}$ from $F\pp2(W')$ to $F\pp2(W'')$ in $\prod_{g\in G}\cat D(G/H[g])$ satisfies conditions~\eqref{eq:f_g}. Then Claim~A gives the existence of a unique morphism $s:\pr_2^*W\to\pr_1^*W$ such that
\begin{equation}
\label{eq:s}%
\gamma_g^*(s)=s_g\qquadtext{in}\cat D(G/H[g])
\end{equation}
for every $g\in G$. This $s$ is necessarily an isomorphism by the same reasoning for the $s_g\inv$. We now need to prove that $s$ satisfies the cocycle condition~\eqref{eq:coc-UX} in $\cat D((G/H)^3)$. This can be tested by applying the faithful functor $F\pp3$ given in~\eqref{eq:F3}. On the other hand, it follows from Lemma~\ref{lem:012} and the above~\eqref{eq:s} that
\begin{align*}
& F\pp3(\pr_{12}^*(s))=\big(\beta_{g_1}^*(\gamma_{g_2}^*(s))\big)_{g_2,g_1}=\big(\beta_{g_1}^*(s_{g_2})\big)_{g_2,g_1}
\\
& F\pp3(\pr_{13}^*(s))=\big(\beta_{1}^*(\gamma_{g_2g_1}^*(s))\big)_{g_2,g_1}=\big(\beta_{1}^*(s_{g_2g_1})\big)_{g_2,g_1}
\\
& F\pp3(\pr_{23}^*(s))=\big(\beta_{1}^*(\gamma_{g_1}^*)\big)_{g_2,g_1}=\big(\beta_{1}^*(s_{g_1})\big)_{g_2,g_1}\,.
\end{align*}
Hence the image by the faithful functor $F\pp3$ of the cocycle condition~\eqref{eq:coc-UX} $\pr_{13}^*(s)=\pr_{12}^*(s)\circ\pr_{23}^*(s)$ becomes $\beta_1^*(s_{g_2g_1})=\beta_{g_1}^*(s_{g_2})\circ \beta_1^*(s_{g_1})$ for every $g_1,g_2\in G$ and this is precisely condition~(ii). By essential surjectivity of $\cat D(X)\to \Desc_{\cat D}(U\oto{\alpha} X)$, there exists $V\in\cat D(X)$ and an isomorphism~$f:\alpha^*V\isoto W$ in~$\cat D(U)$ such that $s\circ \pr_2^*f=\pr_1^*f$ in $\cat D(U\pp2)$. A last application of $F\pp2$ together with~\eqref{eq:12} and~\eqref{eq:s} turns this last equality into the wanted $s_g\circ\beta_1^*(f)=\beta_g^*(f)$ for all~$g\in G$.
\end{proof}

%------------------------------------------------------------------------------
\medbreak
\section{Extending modular representations}
\label{se:extend}%
\medbreak
%------------------------------------------------------------------------------

Let $\kk$ be a local commutative ring over $\bbZ_{(p)}$ and $\cat A=\kk\MMod$. When dealing with stable categories, we assume \emph{without further mention} that $\kk$ is a field.

\smallbreak

Recall Notations~\ref{not:123} and~\ref{not:123'}\,:
\begin{Not}
\label{not:Cs}
Let $\cat D:\Gsets\op\too\Add$ be any of the three sipp-stacks that we have considered in Section~\ref{se:descent} and correspondingly for $\cat C(H)$ when $H\leq G$ is a subgroup\,:
\begin{enumerate}[(1)]
\item
either plain categories $\cat D(X)=\Rep(X)=\kk\MMod^{G\ltimes X}$ and $\cat C(H)=\kk H\MMod$,
\item
or derived categories $\cat D(X)=\Der(\Rep(X))$ and $\cat C(H)=\Der(\kk H)$,
\item
or stable categories $\cat D(X)=\StRep(X)$ and $\cat C(H)=\kk H\SStab$.
\end{enumerate}
\end{Not}

\begin{Rem}
Recall from Example~\ref{exa:BH} the equivalence of groupoids $\iota_H:\rmB H\isoto G\ltimes(G/H)$, $\ast\mapsto[1]_H$, which induces the equivalence on ``plain" representations $\iota_H^*:\Rep(G/H)\isotoo \kk H\MMod$ of~\eqref{eq:iota}. Like every equivalence, $\iota_H^*$ is exact and preserves projective objects, hence it induces equivalences on derived and stable categories, that we still denote $\iota_H^*$. In short, we have for every subgroup $H\leq G$
\begin{equation}
\label{eq:iota-Cs}%
\iota_H^*:\cat D(G/H)\isotoo \cat C(H)\,.
\end{equation}
We now want to transpose some of the functorial behavior of~$\cat D(-)$ to $\cat C(-)$.
\end{Rem}

\begin{Not}
\label{not:gRes}%
Let $H,K\leq G$ be subgroups and let $g\in G$ such that $\con{g}K\leq H$, then every $\kk H$-module $W$ has a \emph{$g$-twisted restriction} from~$H$ to~$K$, denoted $\cRes{g}^H_K(W)$ (or $\con{g\,}W\dto_K$ for brevity, as in the Introduction), which is defined as the same $\kk$-module $W$ but with $K$-action $k\cdot w:=\con{g}k\,w$. This is restriction along the group monomorphism $\con{g}(-):K\to H$ given by conjugation. Note that when $h\in H$ then $W\otoo{h\cdot} W$, $w\mapsto h\,w$, defines a natural isomorphism of $\kk K$-modules that we denote
\begin{equation}
\label{eq:tau}%
\tau_h\,:\ \Res^H_K \,W\isotoo\cRes{h}^H_K \,W\,.
\end{equation}
Note also that $\cRes{g_2g_1}=\cRes{g_1}\circ\cRes{g_2}$, which expresses contravariance of restriction together with $\con{g_2g_1}(-)=\con{g_2}(-)\circ\con{g_1}(-)$. These structures $\gRes^H_K$ and $\tau_h$ pass to the derived categories $\Der(\kk ?\MMod)$ degreewise and to the stable categories $\kk ?\SStab$.
\end{Not}

\begin{Lem}
\label{lem:BH}%
Let $H\leq G$. Let $K_1\leq G$ be a subgroup and $x_1\in G$ with $\con{x_1}K_1\leq H$.
\begen[(a)]
\item
Let $\beta_{x_1}:G/K_1\to G/H$ be the $G$-map of Notation~\ref{not:bcd}, given by $\beta_{x_1}([g]_{K_1})=[g\,x_1\inv]_H$. Then in diagram~\eqref{eq:bi} below, the left-hand square commutes up to the isomorphism $\omega\pp{x_1}:\iota_{K_1}^*\beta_{x_1}^*\isoto \cRes{x_1}^H_{K_1}\iota_H^*$ given for each $V\in \Rep(G/H)$ by $\omega\pp{x_1}_V:=V_{x_1}\,:\ V_{[x_1\inv]}\isoto V_{[1]}$. Similarly for derived and stable categories, where the natural transformation $\omega\pp{x_1}$ is defined objectwise.
\begin{equation}
\label{eq:bi}%
\vcenter{\xymatrix@C=5em{
\cat D(G/H) \ar[r]^-{\Displ(\beta_{x_1})^*} \ar[d]_-{\Displ \iota_H^*}
& \cat D(G/K_1) \ar[d]^-{\Displ \iota_{K_1}^*} \ar[r]^-{\Displ(\beta_{x_2})^*} \ar@{ :>}[ld]^-{\omega\pp{x_1}}_-{\simeq}
& \cat D(G/K_2) \ar[d]^-{\Displ \iota_{K_2}^*} \ar@{ :>}[ld]^-{\omega\pp{x_2}}
\\
\cat C(H) \ar[r]_-{\Displ\cRes{x_1}^H_{K_1}}
& \cat C(K_1) \ar[r]_-{\Displ\cRes{x_2}^{K_1}_{K_2}}
& \cat C(K_2)
}}
\end{equation}
\item
If $K_2\leq G$ is another subgroup and $x_2\in G$ is such that $\con{x_2}K_2\leq K_1$, then, considering the right-hand square in the above diagram, the horizontal composition of the natural transformations $\omega\pp{x_1}\circ\omega\pp{x_2}$ coincides with $\omega\pp{x_1x_2}$, under the identities $\beta_{x_2}^*\circ \beta_{x_1}^*=\beta_{x_1x_2}^*$ and $\cRes{x_2}^{K_1}_{K_2}\circ \cRes{x_1}^H_{K_1}=\cRes{x_1x_2}^H_{K_2}$.
\ened
\end{Lem}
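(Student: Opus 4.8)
The plan is to carry out the whole argument in the plain category $\Rep(-)$, using the elementary description of representations from Remark~\ref{rem:cash} — an object $V\in\Rep(G/H)$ being a family of objects $V_x$ together with transition isomorphisms $V_g\colon V_x\isoto V_{gx}$ satisfying $V_1=\id$ and $V_{g_2g_1}=V_{g_2}\circ V_{g_1}$ — and then to transport the conclusion verbatim to the derived and stable cases. The transport is legitimate because the functors $\iota_?^*$ and $\beta_?^*$ are exact and preserve projectives, the twisted restrictions $\cRes{?}$ pass to derived and stable categories (Notation~\ref{not:gRes}), all the transformations $\omega\pp{-}$ are defined objectwise, and a degreewise isomorphism remains an isomorphism in the derived and stable categories (exactly as at the end of the proof of Lemma~\ref{lem:BC}).

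For part~(a), I would fix $V\in\Rep(G/H)$ and write out both composites of the left square of~\eqref{eq:bi} at $V$. Going right then down, $\iota_{K_1}^*\beta_{x_1}^*V$ has underlying $\kk$-module $(\beta_{x_1}^*V)_{[1]_{K_1}}=V_{[x_1\inv]_H}$ with $K_1$-action $k\mapsto(\beta_{x_1}^*V)_k=V_k$; this makes sense precisely because $\con{x_1}{K_1}\leq H$ forces $[kx_1\inv]_H=[x_1\inv]_H$, so that $V_k$ is an automorphism of $V_{[x_1\inv]_H}$. Going down then right, $\cRes{x_1}^H_{K_1}\iota_H^*V$ is the module $V_{[1]_H}$ carrying the $x_1$-twisted action $k\mapsto V_{\con{x_1}k}=V_{x_1kx_1\inv}$. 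The transition isomorphism $V_{x_1}\colon V_{[x_1\inv]_H}\isoto V_{[1]_H}$ is the proposed $\omega\pp{x_1}_V$, and it intertwines the two $K_1$-actions by the one-line computation $V_{x_1}\circ V_k=V_{x_1k}=V_{x_1kx_1\inv}\circ V_{x_1}$; its naturality in $V$ is nothing but the morphism rule $V'_{x_1}\circ f_{[x_1\inv]}=f_{[1]}\circ V_{x_1}$. This yields the natural isomorphism filling the left square.

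For the more interesting part~(b), I would first record the two functor identities that make the statement meaningful. The coset computation $\beta_{x_1}(\beta_{x_2}([g]_{K_2}))=[g\,x_2\inv x_1\inv]_H=\beta_{x_1x_2}([g]_{K_2})$ shows $\beta_{x_1}\circ\beta_{x_2}=\beta_{x_1x_2}$, whence $\beta_{x_2}^*\circ\beta_{x_1}^*=\beta_{x_1x_2}^*$ by strictness of $\Rep(-)$ (Remark~\ref{rem:no-iso}), while $\cRes{x_2}^{K_1}_{K_2}\circ\cRes{x_1}^H_{K_1}=\cRes{x_1x_2}^H_{K_2}$ is the contravariance relation $\cRes{g_2g_1}=\cRes{g_1}\circ\cRes{g_2}$ of Notation~\ref{not:gRes} with $g_2=x_1$ and $g_1=x_2$. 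These identify the source and target of the pasted $2$-cell with those of $\omega\pp{x_1x_2}$. The horizontal composite is the pasting of the two squares along their shared edge $\iota_{K_1}^*$, namely the vertical composite of the whiskered transformations $\omega\pp{x_2}\beta_{x_1}^*$ and $\cRes{x_2}^{K_1}_{K_2}\omega\pp{x_1}$; its component at $V$ is $\cRes{x_2}^{K_1}_{K_2}(\omega\pp{x_1}_V)\circ(\omega\pp{x_2})_{\beta_{x_1}^*V}$. Since twisted restriction leaves the underlying $\kk$-maps unchanged and $(\omega\pp{x_2})_{\beta_{x_1}^*V}=(\beta_{x_1}^*V)_{x_2}=V_{x_2}$ (with domain $(\beta_{x_1}^*V)_{[x_2\inv]_{K_1}}=V_{[(x_1x_2)\inv]_H}$), this component equals $V_{x_1}\circ V_{x_2}=V_{x_1x_2}=\omega\pp{x_1x_2}_V$, the middle equality being the functoriality rule of $V$. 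This is exactly the claimed coincidence.

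The mathematics here is genuinely light: every verification collapses to the single identity $V_{g_2}\circ V_{g_1}=V_{g_2g_1}$ together with the compatibility of morphisms with the $V_g$. The only real obstacle is bookkeeping — keeping straight which coset's fibre underlies each module, which group acts and how after twisting, and whether the maps $V_{x_1},V_{x_2},V_{x_1x_2}$ have matching domains and codomains and are genuinely $K_i$-equivariant. I would therefore set all of this up carefully once, in the plain case, and then invoke Notation~\ref{not:gRes} and the degreewise principle above to conclude identically in the derived and stable cases.
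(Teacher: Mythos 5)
Your proposal is correct and follows essentially the same route as the paper's own proof: compute both composites of the left square on an object $V\in\Rep(G/H)$, check that the transition isomorphism $V_{x_1}$ is $K_1$-equivariant for the twisted action, reduce part~(b) to the identity $V_{x_1}\circ V_{x_2}=V_{x_1x_2}$ via the whiskered composite $\cRes{x_2}^{K_1}_{K_2}(\omega\pp{x_1}_V)\circ(\omega\pp{x_2})_{\beta_{x_1}^*V}$, and pass to the derived and stable cases by exactness and preservation of projectives. You merely spell out a few steps the paper leaves as ``direct computation'' (the equivariance identity and the naturality in $V$), which is fine.
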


\begin{proof}
Let $V\in \Rep(G/H)$. Compute $\iota_{K_1}^*(\beta_{x_1})^*V=V_{\beta_{x_1}[1]_{K_1}}=V_{[x_1\inv]_H}$ with action of $k\in K_1$ given by $V_k$, using that $k\in H^{x_1}=\Aut_{G/H}([x_1\inv]_H)$. On the other hand, $\cRes{x_1}^H_{K_1}\iota_H^*V=V_{[1]_H}$ with action of $k\in K_1$ given by $V_{\con{x_1}k}$\,, using that $\con{x_1}k\in H=\Aut_{G/H}([1]_H)$. Is is now a direct computation to see that $\omega\pp{x_1}=V_{x_1}:V_{[x_1\inv]}\isoto V_{[1]}$ is $K_1$-linear. The second part is a direct verification. Recall that the horizontal composition $\omega\pp{x_1}\circ\omega\pp{x_2}$ is defined as the composition
$$
\xymatrix@C=5em{
\iota_{K_2}^*\circ\beta_{x_2}^*\circ\beta_{x_1}^* \ar[r]^-{{\Displ\omega\pp{x_2}}\beta_{x_1}^*}
& \cRes{x_2}^{K_1}_{K_2}\circ\iota_{K_1}^*\circ\beta_{x_1}^* \ar[r]^-{\cRes{x_2}^{K_1}_{K_2}\,{\Displ \omega\pp{x_1}}}
& \cRes{x_2}^{K_1}_{K_2}\circ \cRes{x_1}^H_{K_1}\circ\iota_H^*
}
$$
and use that $V_{x_1}\circ V_{x_2}=V_{x_1x_2}$. Finally, all functors in sight are exact and preserve projective objects, so they pass to derived and stable categories on the nose (in particular without deriving them in the former case). See Lemma~\ref{lem:der}.
\end{proof}

\begin{Rem}
\label{rem:BH}%
Lemma~\ref{lem:BH}\,(a) for $x_1=1$ gives in particular an \emph{equality} $\Res^H_K\circ\iota_H^*=\iota_K^*\beta_1^*$ when $K\leq H$ and $\beta_1:G/K\onto G/H$ is the projection, for then $\omega\pp1=\id$.
\end{Rem}

We are now ready to prove Theorem~\ref{thm:pub}, which we state in more general form\,:

\begin{Thm}
\label{thm:stack0}%
Let $\cat C(-)$ be as in Notation~\ref{not:Cs}. Let $H\leq G$ be a subgroup of index prime to~$p$. Recall that $H[g]=H^g\cap H$ and that $H[g_2,g_1]=H^{g_2g_1}\cap H^{g_1}\cap H$.
\begen[(A)]
\item
Let $V',V''\in \cat C(G)$. Then $f_0\mapsto \Res^G_H(f_0)$ induces a bijection between the morphisms $f_0:V'\to V''$ in $\cat C(G)$ and those morphisms $f:\Res^G_H(V')\to \Res^G_H(V'')$ such that $\gRes^H_{H[g]}(f)\circ\tau_g=\tau_g\circ\Res^H_{H[g]}(f)$ in $\cat C(H[g])$ for all $g\in G$.
\smallbreak
\item
Suppose given an object $W\in \cat C(H)$ and for every $g\in G$ an isomorphism $\sigma_g:\Res^H_{H[g]}W\isoto \cRes{g}^H_{H[g]}W$ in $\cat C(H[g])$. Suppose further that\,:
\begen
\item[(I)]
For every $h\in H$ we have $\sigma_h=\tau_h$ in $\cat C(H)$, where $\tau_h$ is as in~\eqref{eq:tau}.
\smallbreak
\item[(II)]
For every $g_1,g_2\in G$, the following diagram commutes in $\cat C({H[g_2,g_1]})$\,:
\begin{equation}
\label{eq:coc-stab}%
\vcenter{\xymatrix@C=4em{
& \Res^H_{{H[g_2,g_1]}} W
 \ar[ld]_-{\Displ \Res^{H[g_1]}_{{H[g_2,g_1]}}(\sigma_{g_1})\qquad}
 \ar[rd]^-{\Displ \qquad\Res^{H[g_2g_1]}_{{H[g_2,g_1]}}(\sigma_{g_2g_1})}
\\
\cRes{g_1}^H_{{H[g_2,g_1]}} W\
 \ar[rr]_-{\Displ \cRes{g_1}^{H[g_2]}_{{H[g_2,g_1]}} (\sigma_{g_2})}
&& \ \cRes{g_2g_1}^H_{{H[g_2,g_1]}} W\,.\kern-3em
}}
\end{equation}
\ened
Then there exists an object $V\in \cat C(G)$ with an isomorphism $f:\Res^G_H V\isoto W$ in $\cat C(H)$ such that the following square commutes in $\cat C(H[g])$ for every $g\in G$\,:
$$
\xymatrix@C=6em{
\Res^G_{H[g]} V \ar[r]^-{\Displ \Res^H_{H[g]} f}_-{\simeq} \ar[d]_-{\Displ\tau_g}^-{\simeq}
& \Res^H_{H[g]} W \ar[d]^-{\Displ\sigma_g}_-{\simeq}
\\
\gRes^G_{H[g]} V \ar[r]^-{\Displ \gRes^H_{H[g]} f}_-{\simeq}
& \gRes^H_{H[g]} W\,.\kern-1em
}
$$
Moreover, the pair formed by the object $V$ in $\cat C(G)$ and the isomorphism $f$ in $\cat C(H)$ is unique up to unique isomorphism of such pairs, in the obvious sense.
\ened
\end{Thm}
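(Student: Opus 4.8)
The plan is to deduce Theorem~\ref{thm:stack0} directly from Theorem~\ref{thm:tame} applied to the relevant sipp-stack~$\cat D$, by transporting all the data and hypotheses across the equivalences $\iota_H^*:\cat D(G/H)\isotoo\cat C(H)$ of~\eqref{eq:iota-Cs}. The essential point is that the two theorems are ``the same statement'' once one replaces every $\cat D(G/K)$ by $\cat C(K)$ via $\iota_K^*$, every $\beta_g^*$ by the $g$-twisted restriction $\cRes{g}$, and tracks the coherence isomorphisms $\omega\pp{x}$ of Lemma~\ref{lem:BH}. So the proof is a dictionary translation, and the only real work is checking that the dictionary is consistent.

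Concretely, first I would prove part~(A). By Remark~\ref{rem:BH}, for $K\leq H$ the functor $\Res^H_K\circ\iota_H^*$ equals $\iota_K^*\beta_1^*$ on the nose, and by Lemma~\ref{lem:BH}(a) the functor $\cRes{g}^H_{H[g]}\circ\iota_H^*$ is identified with $\iota_{H[g]}^*\beta_g^*$ via the isomorphism $\omega\pp g$. Feeding these identifications into Theorem~\ref{thm:tame}(a), the condition ``$\beta_1^*(f)=\beta_g^*(f)$ in $\cat D(G/H[g])$ for all $g$'' becomes, after applying the equivalence $\iota_{H[g]}^*$ and conjugating by $\omega\pp g$ (which is exactly what the factor $\tau_g$ records, since $\omega\pp g_V=V_g$ implements the $g$-action), the condition ``$\gRes^H_{H[g]}(f)\circ\tau_g=\tau_g\circ\Res^H_{H[g]}(f)$''. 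Since $\iota_H^*$ is an equivalence, the bijection $f_0\mapsto\Res^G_H(f_0)$ in~(A) is the image of the bijection $f_0\mapsto\alpha^*(f_0)$ in Theorem~\ref{thm:tame}(a), giving~(A).

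For part~(B) I would set $W_{\cat D}:=(\iota_H^*)\inv(W)\in\cat D(G/H)$ and define $s_g:\beta_1^*W_{\cat D}\isoto\beta_g^*W_{\cat D}$ to be the isomorphism corresponding to $\sigma_g$ under the identifications $\iota_{H[g]}^*\beta_1^*W_{\cat D}=\Res^H_{H[g]}W$ (Remark~\ref{rem:BH}) and $\iota_{H[g]}^*\beta_g^*W_{\cat D}\cong\cRes{g}^H_{H[g]}W$ (via $\omega\pp g$). Then hypothesis~(I), $\sigma_h=\tau_h$, translates via $\omega\pp h_V=V_h$ into $s_h=\id$, which is exactly Theorem~\ref{thm:tame}(i); and hypothesis~(II), the commuting triangle~\eqref{eq:coc-stab}, translates into the equation $\beta_1^*(s_{g_2g_1})=\beta_{g_1}^*(s_{g_2})\circ\beta_1^*(s_{g_1})$ of Theorem~\ref{thm:tame}(ii). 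Theorem~\ref{thm:tame}(b) then produces $V_{\cat D}\in\cat D(G/G)$ and $f_{\cat D}:\alpha^*V_{\cat D}\isoto W_{\cat D}$ with $\beta_g^*(f_{\cat D})=s_g\circ\beta_1^*(f_{\cat D})$; I set $V:=(\iota_G^*)\inv$ applied appropriately — more precisely $V:=\iota_G^*(V_{\cat D})\in\cat C(G)$ — and $f:=\iota_H^*(f_{\cat D})$, and read off the commuting square, uniqueness coming from~(A) exactly as uniqueness in Theorem~\ref{thm:tame}(b) came from its part~(a).

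The main obstacle, and the place requiring genuine care rather than formal transport, is the bookkeeping of the coherence isomorphisms $\omega\pp{g}$ when passing from the ``two-variable'' conditions over $G/H[g]$ to the ``three-variable'' cocycle over $G/H[g_2,g_1]$. In Theorem~\ref{thm:tame} the cocycle reads $\beta_1^*(s_{g_2g_1})=\beta_{g_1}^*(s_{g_2})\circ\beta_1^*(s_{g_1})$, where the middle term uses the \emph{further} restriction $\beta_{g_1}:G/H[g_2,g_1]\to G/H[g_2]$; to match this against~\eqref{eq:coc-stab} I must use Lemma~\ref{lem:BH}(b), the horizontal-composition compatibility $\omega\pp{x_1}\circ\omega\pp{x_2}=\omega\pp{x_1x_2}$, to see that applying $\beta_{g_1}^*$ to $s_{g_2}$ and then translating corresponds precisely to forming $\cRes{g_1}^{H[g_2]}_{H[g_2,g_1]}(\sigma_{g_2})$, with the conjugation factors composing correctly via $\con{g_2g_1}(-)=\con{g_2}(-)\circ\con{g_1}(-)$. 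Verifying that these $\omega$'s cancel in the right pattern so that~\eqref{eq:coc-stab} becomes \emph{exactly} condition~(ii), with no stray twist surviving, is the crux; once it is checked, everything else is the formal application of Theorem~\ref{thm:tame} through the equivalences $\iota_?^*$.
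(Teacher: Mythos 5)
Your proposal is correct and follows essentially the same route as the paper: the paper's proof likewise pushes Theorem~\ref{thm:tame} through the equivalences $\iota_K^*$, defines $s_g$ by full faithfulness of $\iota_{H[g]}^*$ so that $\omega\pp{g}_{\hat W}\circ\iota_{H[g]}^*(s_g)=\sigma_g$, translates~(I) into~(i) via $\omega\pp{h}_{\hat W}=\tau_h$, and recovers $(V,f)$ as $(\iota_G^*\hat V,\iota_H^*\hat f)$, with uniqueness coming from part~(A). The verification you flag as the crux---that the $\omega$'s cancel so that~\eqref{eq:coc-stab} translates exactly into condition~(ii) of Theorem~\ref{thm:tame}---is indeed where the paper spends its effort, via a large commutative diagram whose pieces are precisely the ingredients you name: Lemma~\ref{lem:BH}\,(b), naturality of $\omega\pp{g_1}$ applied to $s_{g_2}$, hypothesis~(II), and the defining relation of the~$s_g$.
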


\begin{proof}
We ``push" Theorem~\ref{thm:tame} along the equivalences $\iota_K^*:\cat D(G/K)\isoto\cat C(K)$ of~\eqref{eq:iota-Cs} for all subgroups $K$ in sight. We leave (A) as an exercise and focus on~(B). First, the result is independent of $W$ up to isomorphism in~$\cat C(H)$. So, we can assume that $W=\iota_H^*\hat W$ for some $\hat W\in \cat D(G/H)$. Let $g\in G$. Consider the left-hand square below, coming from Lemma~\ref{lem:BH} applied with $K_1:=H[g]$ and $x_1=g$
$$
\xymatrix@C=1.5em@R=1.5em{
\hat W \ar@{}[r]|-{\in} \ar@{|->}[d]
& \cat D(G/H) \ar[rr]^-{\Displ(\beta_g)^*} \ar[d]_-{\Displ \iota_H^*}^-{\simeq}
&& \cat D(G/H[g]) \ar[d]^-{\Displ \iota_{H[g]}^*}_-{\simeq} \ar@{ :>}[lld]^-{\omega\pp{g}}_-{\simeq}
& \beta_g^*\hat W \ar@{}[l]|-{\ni} \ar@{|->}[d]
&& \beta_1^*\hat W \ar@{-->}[ll]_-{\simeq}^{\exists\,\Displ s_g}\ar@{|->}[d]
\\
W \ar@{}[r]|-{\in}
& \cat C(H)\ar[rr]_-{\Displ\gRes^H_{H[g]}}
&& \cat C(H[g])
& \iota_{H[g]}^*\beta_g^*\hat W \ar@{}[l]|-{\ni} \ar[r]^-{\simeq}_-{\omega\pp{g}_{\hat W}}
& \gRes^H_{H[g]} W
& \Res^H_{H[g]} W \ar[l]_-{\simeq}^{\sigma_g}
}
$$
involving the isomorphism $\omega\pp{g}:\iota_{H[g]}^*\circ\beta_g^*\isoto \gRes^H_{H[g]}\circ\iota_H^*$. By Remark~\ref{rem:BH}, we have $\iota_{H[g]}^*\circ \beta_1^*=\Res^H_{H[g]}\circ\iota_H^*$, hence $\iota_{H[g]}^*\circ \beta_1^*(\hat W)=\Res^H_{H[g]}W$. Since $\beta_1^*\hat W$ and $\beta_g^*\hat W$ are both in $\cat D(G/H[g])$ and since $\iota_{H[g]}^*$ is fully faithful, there exists an isomorphism $s_g:\beta_1^*\hat W\isoto \beta_g^*\hat W$ in $\cat D(G/H[g])$ such that $\iota_{H[g]}^*(s_{g})=(\omega\pp{g}_{\hat W})\inv\circ\sigma_{g}$, that is,
\begin{equation}
\label{eq:s_g}%
\omega\pp{g}_{\hat W}\circ\iota_{H[g]}^*(s_{g})=\sigma_{g}
\end{equation}
in $\cat C(H[g])$. We claim that the collection of $s_g:\beta_1^*\hat W\isoto \beta_g^*\hat W$, for all $g\in G$, satisfies conditions~(i) and~(ii) of Theorem~\ref{thm:tame}. For~(i), let $g=h\in H$. In that case $\beta_h=\id:G/H[h]\to G/H$ and $\omega\pp{h}_{\hat W}:\iota_H^*\hat W=W\to \cRes{h}^H_H\iota_H^*\hat W=\con{h\,}W$ is simply $\hat W_h=\tau_h$. By~(I), $\sigma_h=\tau_h$ as well. Hence $\iota_H^*(s_h)=(\omega\pp{h}_{\hat W})\inv\sigma_h=\tau_h\inv\tau_h=\id_W$ and $s_h=\id_{\hat W}$ as wanted. For~(ii), let $g_1,g_2\in G$ and let $K=H[g_2,g_1]$. We need to prove $\beta_1^*(s_{g_2g_1})=\beta_{g_1}^*(s_{g_2})\circ\beta_1^*(s_{g_1})$ in $\cat D(G/K)$. Since $\iota_K^*:\cat D(G/K)\isoto \cat C(K)$ is faithful, it suffices to prove
$$
\iota_K^*\circ\beta_1^*(s_{g_2g_1})=\big(\iota_K^*\circ\beta_{g_1}^*(s_{g_2})\big)\circ\big(\iota_K^*\circ\beta_1^*(s_{g_1})\big)
$$
in $\cat C(K)$. This is the outer commutativity of the following diagram in~$\cat C(K)$\,:
$$
\xymatrix@C=1.3em@R=3em{
&& \iota_K^*\beta_1^*\hat W
 \ar@{=}[d]
 \ar@<.5em>[rrd]^(.45){\quad\Displ\iota_K^*\beta_1^*(s_{g_2g_1})}
 \ar@<-.5em>[lld]_(.45){\Displ\iota_K^*\beta_1^*(s_{g_1})\quad}
\\
\iota_K^*\beta_1^*\beta_{g_1}^*\hat W \kern-2em
 \ar@{=}[ddd]
&& \Res^H_K W
 \ar[ldd]^(.35){\kern-.5em\Res^{H[g_1]}_K(\sigma_{g_1})}
 \ar[rdd]_(.67){\Res^{H[g_2g_1]}_K(\sigma_{g_2g_1})\kern-.6em}
 \ar[ld]_-{\Res^{H[g_1]}_K\iota_{H[g_1]}^*(s_{g_1})\kern.5em}
 \ar[rd]^-{\qquad\Res^{H[g_2g_1]}_K\iota_{H[g_2g_1]}^*(s_{g_2g_1})}
 \ar@{}[dd]|(.6){\eqref{eq:coc-stab}}
&& \kern-2em \iota_K^*\beta_{g_1}^*\beta_{g_2}^*\hat W
 \ar@{=}[ddd]
\\
& \kern-2em\Res^{H[g_1]}_K\iota_{H[g_1]}^*\beta_{g_1}^*\hat W
 \ar@{=}[lu]
 \ar[d]_-{\Res^{H[g_1]}_K\omega\pp{g_1}_{\hat W}}
 \ar@{}[rd]_(.23){\eqref{eq:s_g}}%\textrm{ for }g_1\quad}
&& \kern1em \Res^{H[g_2g_1]}_K\iota_{H[g_2g_1]}^*\beta_{g_2g_1}^*\hat W \kern-2em
 \ar@{=}[ru]
 \ar[d]^-{\Res^{H[g_2g_1]}_K\omega\pp{g_2g_1}_{\hat W}}
 \ar@{}[ld]^(.23){\eqref{eq:s_g}}%\textrm{ for }g_2g_1}
&
\\
& \cRes{g_1}^H_K W\
 \ar[rr]^(.45){\cRes{g_1}^{H[g_2]}_K (\sigma_{g_2})}
 \ar[rd]|(.45){\cRes{g_1}^{H[g_2]}_K\iota_{H[g_2]}^*(s_{g_2})\qquad}
&& \ \cRes{g_2g_1}^H_K W
&
\\
\iota_K^*\beta_{g_1}^*\beta_1^*\hat W \kern-2em
 \ar@<-.5em>[rrd]_-{\Displ\iota_K^*\beta_{g_1}^*(s_{g_2})\ }
 \ar[ru]^-{\simeq}_-{\omega\pp{g_1}_{\beta_1^*\hat W}}
&& \cRes{g_1}^{H[g_2]}_{K}\beta_{g_2}^*\hat W
 \ar[ru]|(.5){\qquad \cRes{g_1}^{H[g_2]}_K\omega\pp{g_2}_{\hat W}}
 \ar@{}[ll]|{(\heartsuit)}
 \ar@{}[u]|(.6){\eqref{eq:s_g}}%|(.45){\textrm{for }g_2}
&& \kern-2em \iota_K^*\beta_{g_2g_1}^*\hat W
 \ar[lu]_-{\simeq}^-{\omega\pp{g_2g_1}_{\hat W}}
 \ar@{}[ll]|-{(\clubsuit)}
\\
&& \iota_K^*\beta_{g_1}^*\beta_{g_2}^*\hat W
 \ar@{=}@<-.5em>[rru]
 \ar[u]^-{\omega\pp{g_1}_{\beta_{g_2}^*\hat W}}_-{\simeq}
&}
$$
The ``key" triangles marked~\eqref{eq:s_g} commute by~\eqref{eq:s_g} applied to $g=g_1$, $g_2$ and $g_2g_1$ (anti-clockwise from left), to which we apply $\Res^{H[g_1]}_K$, $\cRes{g_1}^{H[g_2]}_K$, and $\Res^{H[g_2g_1]}_K$, respectively. The central triangle commutes by hypothesis~\eqref{eq:coc-stab} in~(II). The ``square" marked~$(\heartsuit)$ commutes by naturality of~$\omega\pp{g_1}_{\hat W}$ for the map~$s_{g_2}$. The ``square" marked~$(\clubsuit)$ commutes by part~(b) of Lemma~\ref{lem:BH}. The unmarked squares commute via easy identifications, as in Lemma~\ref{lem:BH}\,(b) but with $x_1$ or $x_2$ equal to~1.

We can therefore apply Theorem~\ref{thm:tame} to $\hat W$ and $(s_g)_{g\in G}$ to obtain an object $\hat V\in \cat D(G/G)$ and an isomorphism $\hat f:\alpha^*\hat V\isoto \hat W$ in $\cat D(G/H)$, where $\alpha:G/H\onto G/G$ is the sipp-cover. This gives an object $V:=\iota_G^*\hat V$ in $\cat C(G/G)$ and an isomorphism $f:=\iota_H^*\hat f:\Res^G_H V\isoto W$ in $\cat C(G/H)$, using that $\Res^G_H\circ\iota_G^*=\iota_H^*\circ\alpha^*$ by Remark~\ref{rem:BH}.
\end{proof}

\begin{Rem}
\label{rem:pedibus}%
Isomorphisms $\{\sigma_g\}_{g\in G}$ as in Theorem~\ref{thm:stack0} must exist for $W$ to extend to~$G$ (for $W=\Res^G_H(V)$, take $\sigma_g=g\cdot=\tau_g$). Along those lines, this result is not thrilling when $\cat C(?)=\kk?\MMod$ is the plain category of representations for, given $W\in \kk H\MMod$ and $(\sigma_g)_{g\in G}$ as in the statement, we can equip the ``underlying" $\kk$-module $V:=\Res^H_{1}W$ with the action $g\cdot v:=\sigma_g(v)$ for every $g\in G$. So, Theorem~\ref{thm:stack0} is particularly interesting for derived categories, where $\sigma_g$ is only an isomorphism in the derived category (\ie a fraction of quasi-isomorphisms) but even more so for stable categories as stated in Theorem~\ref{thm:pub} in the Introduction. Indeed, for stable categories, the trick of getting an ``underlying" object by restriction to the trivial subgroup breaks down completely since $\kk 1\SStab=0$. There is no ``fiber functor" on stable categories\,! The subtlety of the stable-category version resides in the vanishing of the stable category for groups of order prime to~$p$. In this vein, $\sigma_g$ is trivial when $H[g]$ has order prime to~$p$ and condition~(II) is void when $H[g_2,g_1]$ has order prime to~$p$. This illustrates how stable categories can actually be more flexible than plain ones. This flexibility may also be observed with $\otimes$-invertible objects\,: In $\kk G\MMod$, only one-dimensional representations are $\otimes$-invertible but in $\kk G\SStab$ there are much more $\otimes$-invertible (indeed, precisely the endotrivial $\kk G$-modules).
\end{Rem}

\begin{Rem}
In Theorem~\ref{thm:stack0}, if $W$ is finitely generated then so is~$V$. Similar statements apply with bounded complexes, etc. There are two proofs of this. Either note that the above proof works as soon as $\cat D(-)$ is idempotent-complete and preserved by the $\alpha^*/\alpha_*$ adjunction or prove directly, using that $\eta:\Id_{\cat C(G)}\to \CoInd_H^G\Res^G_H$ is split, that if $\Res^G_H V$ is finitely generated then so is~$V$.
\end{Rem}

\begin{Rem}
Let $H\leq G$ with index prime to~$p$. Suppose that $H$ is \emph{strongly $p$-embedded} in~$G$, in the sense that $H[g]$ has order prime to~$p$ whenever $g\in G$ is not in~$H$. Then it is well-known that $\Res^G_H:\kk G\SStab\isoto \kk H\SStab$ is an equivalence. This is compatible with Theorem~\ref{thm:stack0} because the isomorphisms $\sigma_g$ are completely forced in that case\,: (I) treats the case $g\in H$ and $\kk H[g]\SStab=0$ when $g\notin H$.
\end{Rem}

\begin{Rem}
\label{rem:normal}%
Let $H\unlhd G$ be a \emph{normal} subgroup of index prime to~$p$. Let $\Gamma=G/H$. Then Theorem~\ref{thm:stack0} is essentially formalizing the idea that $\kk G\SStab$ is the $\Gamma$-invariant part $(\kk H\SStab)^\Gamma$ of $\kk H\SStab$. This is compatible with the paradigm proposed in Theorem~\ref{thm:main}, where restriction $\kk G\SStab\too\kk H\SStab$ is viewed as a separable extension of scalars, for arbitrary subgroups $H\leq G$. In the normal case of $H\unlhd G$ this separable extension is moreover galoisian with Galois group $\Gamma=G/H$.
\end{Rem}

%------------------------------------------------------------------------------
\goodbreak\medbreak
\section{Endotrivial modules, from local to global}
\label{se:pic}%
\medbreak
%------------------------------------------------------------------------------

For this section, $\kk$ is a field of characteristic~$p$ dividing the (finite) order of~$G$.

\begin{Def}
\label{def:Cech}%
Let $\cat B$ be an abelian category (\eg\ $\cat B=\bbZ\MMod$). Let $P:\cat G\op\to \cat B$ be a $\cat B$-valued presheaf on a site~$\cat G$ and let $\cat U=\{U\to X\}$ be a cover in~$\cat G$. The \emph{\v Cech cohomology of~$\cat U$ with coefficients in~$P$}, denoted $\cH^\sbull(\cat U,P)\in\cat B$, is the cohomology of the following \emph{\Cech\ complex} $\cC^\sbull(\cat U,P)$ in~$\cat B$\,:
\begin{equation}
\label{eq:Cech}%
\xymatrix@C=1.5em{
0 \ar[r]
& P(U) \ar[r]^-{d^0}
& P(U\pp2) \ar[r]^-{d^{\,1}}
& \cdots P(U\pp{n}) \ar[r]^-{d^{n-1}}
& P(U\pp{n+1}) \ar[r]^-{d^n}
& P(U\pp{n+2}) \cdots
}\kern-1em
\end{equation}
with $\cC^n(\cat U,P)=P(U\pp{n+1})$ in degree~$n\geq 0$, with $U\pp{n}=U\times_X\cdots \times_XU$ ($n$ factors) and where the differential $d^n$ is the usual alternate sum of the maps induced by the $(n+2)$ projections $U\pp{n+2}\to U\pp{n+1}$. The map $P(X)\to P(U)$ induces an obvious map $P(X)\to \cH^0(\cat U,P)$ which is an isomorphism when $P$ is a sheaf.
\end{Def}

\begin{Exa}
Let $\Gm:\Gsets\op\to \Ab$ be the presheaf of abelian groups given by the stable automorphisms of the trivial representation, \ie for every $X\in\Gsets$
$$
\Gm(X)=\Aut_{\StRep(X)}(\unit)
$$
where $\unit\in\StRep(X)$ is given by $\unit_x=\kk$ for all $x\in X$ and $\unit_g=\id_\kk$ for every $g\in G$. Of course, for $Y\oto{\alpha} X$, $\Gm(\alpha)$ is the restriction~$\alpha^*$ using that $\alpha^*\unit=\unit$. When $X=G/H$ then $\iota_H^*(\unit)=\unit_{\kk H\sstab}$ is the usual $\otimes$-unit and its automorphism group is $\kk^\times$ when $H$ has order divisible by~$p$ and is trivial otherwise (for then $\kk H\sstab=0$). In other words, $\Gm=\underline{\kk^\times\!}$\, is the constant sipp-sheaf associated to the abelian group $\kk^\times$ as in Proposition~\ref{prop:uA}.
\end{Exa}

\begin{Rem}
For every $X\in \Gsets$, we can equip $\StRep(X)$ with a tensor by letting $(V\otimes V')_x:=V_x\otimes_{\kk}V'_x$ for every $x\in X$ and similarly for morphisms. The above representation $\unit$ is the $\otimes$-unit. We can consider the group of isomorphism classes of $\otimes$-invertible objects $\Pic(\StRep(X))$ which is an abelian group for~$\otimes$ as usual. For every $G$-map $\alpha:Y\to X$, the functor $\alpha^*:\StRep(X)\to \StRep(Y)$ is a $\otimes$-functor. It therefore induces a homomorphism $\alpha^*:\Pic(\StRep(X))\to\Pic(\StRep(Y))$. This yields a presheaf of abelian groups, simply denoted
$$
\StPic:\Gsets\op\to \Ab\,.
$$
When $X=G/H$, the equivalence $\iota_H^*:\StRep(G/H)\isoto \kk H\SStab$ is a (strict) $\otimes$-functor and yields an isomorphism $\StPic(G/H)\isoto \Pic(\kk H\sstab)=:T(H)$ with the group of isomorphism classes of $\otimes$-invertible objects in~$\kk H\sstab$, \aka the group of stable isomorphism classes of \emph{endotrivial} $\kk H$-modules. We denote by
$$
T(G,H):=\Ker\big(\Res^G_H\,:\ T(G)\too T(H)\big)
$$
the kernel of restriction. Whenever $\con{g}K\leq H$, the following diagram commutes
\begin{equation}
\label{eq:comp}%
\vcenter{\xymatrix@R=2em{
\StPic(G/H) \ar[r]^-{\simeq}_-{\iota_H^*} \ar[d]_-{\beta_g^*}
& T(H) \ar[d]^-{\gRes^H_K}
\\
\StPic(G/K) \ar[r]_-{\simeq}^-{\iota_K^*}
& T(K)
}}
\end{equation}
by Lemma~\ref{lem:BH}\,(a), where $\beta_g:G/K\to G/H$ is the usual map $[x]\mapsto [xg\inv]$ as in Notation~\ref{not:bcd}. In particular $\iota_G^*:\StPic(G/G)\isoto T(G)$ induces an isomorphism
$$
\Ker\big(\StPic(G/G)\otoo{\alpha^*} \StPic(G/H)\big)\isotoo T(G,H)
$$
where $\alpha=\beta_1\,:\ G/H\onto G/G$ is the only $G$-map.
\end{Rem}

\begin{Thm}
\label{thm:TGP}%
Let $H\leq G$ be a subgroup of index prime to~$p$ and let $\cat U$ be the associated sipp-cover $G/H\onto G/G$ in~$\Gsets$. Then there exists a canonical isomorphism $T(G,H)\cong\cH^1(\cat U,\Gm)$.
\end{Thm}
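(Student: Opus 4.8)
The plan is to combine the isomorphism $\Ker\big(\StPic(G/G)\oto{\alpha^*}\StPic(G/H)\big)\isotoo T(G,H)$ established just before the statement with the Stack Theorem~\ref{thm:stack} for $\cat D=\StRep(-)$. The key point is that the comparison functor $Q:\StRep(G/G)\to\Desc_{\cat D}(\cat U)$ of Definition~\ref{def:descent} is not only an equivalence (by the stack property) but a \emph{monoidal} equivalence, since it is built from the $\otimes$-functor $\alpha^*$ and the descent category carries the tensor $(W_1,s_1)\otimes(W_2,s_2):=(W_1\otimes W_2,\,s_1\otimes s_2)$. Moreover the forgetful functor $\Desc_{\cat D}(\cat U)\to\StRep(U)$, $(W,s)\mapsto W$, composes with $Q$ to give $\alpha^*$. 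Hence $\Ker(\alpha^*)$ corresponds, under the Picard-group isomorphism induced by $Q$, to the subgroup of $\Pic\big(\Desc_{\cat D}(\cat U)\big)$ of isomorphism classes of $\otimes$-invertible descent data $(W,s)$ with $W\cong\unit$, and it suffices to identify this subgroup with $\cH^1(\cat U,\Gm)$.

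First I would normalize: such a datum $(W,s)$ is isomorphic to one with $W=\unit$ on the nose, and then the gluing isomorphism $s:\pr_2^*\unit\isoto\pr_1^*\unit$ is, using $\alpha^*\unit=\unit$ (so that $\pr_1^*\unit=\pr_2^*\unit=\unit$ on $U\pp2$), an element of $\Aut_{\StRep(U\pp2)}(\unit)=\Gm(U\pp2)=\cC^1(\cat U,\Gm)$. The cocycle condition $\pr_{13}^*(s)=\pr_{12}^*(s)\circ\pr_{23}^*(s)$ of Definition~\ref{def:descent}, read inside the abelian group $\Gm(U\pp3)$, says precisely that $d^1(s)=0$ for the alternating-sum differential of Definition~\ref{def:Cech}; thus $s\in Z^1:=\Ker(d^1)$. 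Conversely, every $s\in Z^1$ yields a datum $(\unit,s)$ satisfying descent, and it is $\otimes$-invertible with inverse $(\unit,s\inv)$ since $(\unit,s)\otimes(\unit,s\inv)\cong(\unit,\id)=Q(\unit)$; as $Q$ is an equivalence, its class comes from a unique class in $\Ker(\alpha^*)$.

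Next I would compute when two such data are isomorphic. An isomorphism $(\unit,s)\isoto(\unit,s')$ is a morphism $u:\unit\to\unit$ in $\StRep(U)$, i.e.\ $u\in\Gm(U)=\cC^0(\cat U,\Gm)$, satisfying $\pr_1^*(u)\circ s=s'\circ\pr_2^*(u)$; since $\kk^\times$ is abelian this reads $s'=s\cdot\pr_1^*(u)\cdot\pr_2^*(u)\inv=s\cdot d^0(u)$. Therefore isomorphism classes of such data correspond bijectively to $Z^1/\Img(d^0)=\cH^1(\cat U,\Gm)$. Finally, the tensor of data multiplies gluing isomorphisms, $(\unit,s_1)\otimes(\unit,s_2)=(\unit,s_1\cdot s_2)$, and multiplication in $\kk^\times$ is exactly the operation computing $\cH^1$; so the bijection is a group isomorphism, which, composed with the identifications above, yields the canonical $T(G,H)\cong\cH^1(\cat U,\Gm)$.

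The main obstacle I anticipate is not conceptual but a matter of careful bookkeeping: one must verify that $Q$ is genuinely monoidal so that the Picard subgroups correspond (and that the underlying object of $Q(\hat V)$ is $\cong\unit$ exactly when $\hat V\in\Ker\alpha^*$), and one must align the indexing and signs of the cocycle and coboundary relations of Definition~\ref{def:descent} with the differentials $d^0,d^1$ of Definition~\ref{def:Cech}, tracking the two projections $U\pp2\to U$ and the three projections $U\pp3\to U\pp2$. Once these conventions are matched, all the identifications above are routine.
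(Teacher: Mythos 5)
Your proposal is correct and takes essentially the same route as the paper: both identify $T(G,H)$ with $\Ker\big(\alpha^*:\StPic(G/G)\to\StPic(G/H)\big)$ and then use the stack property of $\StRep(-)$ to match invertible descent data $(\unit,s)$ with \Cech\ $1$-cocycles, the cocycle condition of Definition~\ref{def:descent} becoming $d^1(s)=0$ in $\Gm(U\pp3)$, so your argument is simply a fully detailed version of what the paper calls a ``general fact''. The only minor divergence is that the paper observes via Remark~\ref{rem:uA} that $d^{\,0}$ is trivial for this constant sheaf (so $\cH^1(\cat U,\Gm)$ is just the group of $1$-cocycles), whereas you quotient by $\Img(d^{\,0})$ when comparing isomorphic data $(\unit,s)\cong(\unit,s')$ --- the two bookkeepings agree since that image vanishes.
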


\begin{proof}
Let $\cat D=\StRep:\Gsets\op\to\Add$ be the sipp-stack of stable categories, established in Theorem~\ref{thm:stack}\,(c). It is a general fact for $\cat U=\{U\to X\}$ that $\cH^1(\cat U,\Gm)$ is isomorphic to the set of isomorphism classes of those $V\in\cat D(X)$ which become isomorphic to~$\unit$ in~$\cat D(U)$. Indeed, a 1-cocycle in the \Cech\ complex $\cC^\sbull(\cat U,\Gm)$ of~\eqref{eq:Cech} is nothing but a gluing isomorphism for the object $W=\unit\in\cat D(G/H)$ with respect to our cover~$\cat U$. Also note that $d^{\,0}$ is trivial in that case since $\Gm(\pr_1)=\Gm(\pr_2)$ for $\pr_i:U\pp2\to U$ the two projections; see Remark~\ref{rem:uA}.
\end{proof}

\begin{Thm}
\label{thm:H2}%
Let $H\leq G$ be a subgroup of index prime to~$p$. Let $U=G/H$ and consider $\cat U=\{U\,\overset{\alpha}\onto\, G/G\}$ the associated sipp-cover in $G$-sets. Then\,:
\begen[(a)]
\item
The image of the homomorphism $\alpha^*:\StPic(G/G)\to \StPic(U)$ is contained in $\cH^0(\cat U,\StPic)=\Ker\big(\pr_1^*-\pr_2^*:\StPic(U)\too\StPic(U\pp2)\big)$.
\smallbreak
\item
Let $w\in \cH^0(\cat U,\StPic)$. Choose $W\in\StRep(U)$ such that $w=[W]_{\simeq}$. Choose an isomorphism $\xi:\pr_2^*W\isoto \pr_1^*W$ in $\StRep(U\pp2)$. Define $\zeta:=\pr_{13}^*(\xi)\inv\circ\pr_{12}^*(\xi)\circ\pr_{23}^*(\xi)\in\Gm(U\pp3)$. Then $\zeta$ is a 2-cocycle in the \Cech\ complex $\cC^\sbull(\cat U,\Gm)$ of~\eqref{eq:Cech}. Moreover, the class of $\zeta$ in $\cH^2(\cat U,\Gm)$ only depends on~$w$ but neither on the choice of~$\xi$, nor on that of~$W$ as above.
\smallbreak
\item
Construction~(b) yields a well-defined group homomorphism
$$
z:\cH^0(\cat U,\StPic)\to \cH^2(\cat U,\Gm)\,,\qquad w\mapsto [\zeta(W,\xi)]\,.
$$
\smallbreak
\item
We have $\Img\big(\StPic(G/G) \otoo{\Displ \alpha^*}\StPic(U)\big)=
\Ker\big(\cH^0(\cat U,\StPic) \otoo{\Displ z} \cH^2(\cat U,\Gm)\big)$.
\smallbreak
\item
The isomorphism $\iota_H^*:\StPic(G/H)\isoto T(H)$ restricts to a canonical isomorphism
$\Ker\big(\cH^0(\cat U,\StPic) \oto{z} \cH^2(\cat U,\Gm)\big)\isotoo \Img\big(T(G)\to T(H)\big)$. \ened
\end{Thm}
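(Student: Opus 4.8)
The plan is to recognize Theorem~\ref{thm:H2} as the low-degree exact sequence
$$0\to \cH^1(\cat U,\Gm)\to \StPic(G/G)\overset{\alpha^*}\to \cH^0(\cat U,\StPic)\overset{z}\to \cH^2(\cat U,\Gm)$$
relating the Picard group of the stack $\StRep(-)$ to the \Cech\ cohomology of its sheaf of units $\Gm=\underline{\kk^\times}$, the first isomorphism being Theorem~\ref{thm:TGP}. Part~(a) is pure functoriality: since $U\pp2=U\times_{G/G}U$ forces $\alpha\circ\pr_1=\alpha\circ\pr_2$ and $\StRep(-)$ is a \emph{strict} functor (Remark~\ref{rem:no-iso}), one has $\pr_1^*\alpha^*=\pr_2^*\alpha^*$ on the nose; hence $\alpha^*V$ is annihilated by $\pr_1^*-\pr_2^*$ for every $V$, that is, $\alpha^*[V]\in\cH^0(\cat U,\StPic)$.

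For part~(b), fix $w=[W]$ with $\pr_2^*W\cong\pr_1^*W$ and a gluing isomorphism $\xi$. Pulling $\xi$ back along the three projections $\pr_{ij}:U\pp3\to U\pp2$ and using the relations $\pr_a\pr_{ij}$ to identify sources and targets, the composite $\zeta=\pr_{13}^*(\xi)\inv\circ\pr_{12}^*(\xi)\circ\pr_{23}^*(\xi)$ is an \emph{automorphism} of the pullback of $W$ to $U\pp3$; since $W$ is $\otimes$-invertible, the canonical identification $\Aut(L)\cong\Aut(\unit)$ for invertible $L$ lets us read $\zeta$ as an element of $\Gm(U\pp3)=\Aut_{\StRep(U\pp3)}(\unit)$. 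The heart of the proof is the verification that $d^2\zeta=0$: writing $\zeta_{ijk}$ for the face-restrictions of $\zeta$ to $U\pp4$ and $\xi_{ab}$ for the edge-restrictions of $\xi$, one has $\zeta_{ijk}=\xi_{ik}\inv\xi_{ij}\xi_{jk}$, and in the alternating composite $\zeta_{234}\,\zeta_{134}\inv\,\zeta_{124}\,\zeta_{123}\inv$ each of the six edges $\xi_{ab}$ occurs exactly twice with opposite variance. Because $\Gm=\underline{\kk^\times}$ is \emph{central} in the Picard groupoid, these non-automorphism edges may be slid past one another and cancel in pairs, leaving the identity. Independence of the class $[\zeta]\in\cH^2(\cat U,\Gm)$ from the choices is the usual coboundary bookkeeping: a second gluing $\xi'=\xi\circ\phi$ differs by $\phi\in\Aut(\pr_2^*W)=\Gm(U\pp2)$ and changes $\zeta$ by the \Cech\ coboundary $d^1\phi$, while a second representative $W'\cong W$ transports $\xi$ to a gluing yielding the identical $\zeta$. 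This is the step I expect to be the main obstacle, precisely because one must track which invertible object each isomorphism lives over and invoke centrality of $\kk^\times$ to justify the pairwise cancellations.

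Parts~(c) and~(d) are then formal. For~(c), representing $w_1,w_2$ by $W_1,W_2$ and choosing $\xi_1\otimes\xi_2$ as gluing for $W_1\otimes W_2$ gives $\zeta=\zeta_1\cdot\zeta_2$, so $z$ is a homomorphism. For~(d), if $w=\alpha^*[V]$ then $\pr_1^*W=\pr_2^*W$ strictly (as in~(a)), so $\xi=\id$ and $z(w)=0$; hence $\Img(\alpha^*)\subseteq\Ker(z)$. Conversely, if $z(w)=0$ write $\zeta=d^1\phi$ and replace $\xi$ by $\xi\circ\phi\inv$; the coboundary computation of~(b) turns the new $\zeta$ into the identity, i.e.\ the new gluing satisfies the cocycle condition. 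Thus $(W,\xi)$ is a genuine descent datum, and since $\StRep(-)$ is a stack (Theorem~\ref{thm:stack}(c)) it is $Q(V)$ for some $V\in\StRep(G/G)$ with $\alpha^*V\cong W$; as the descent equivalence is monoidal ($\alpha^*$ and the pullbacks being $\otimes$-functors), the datum $(W,\xi)$ is $\otimes$-invertible with inverse descended from $W\inv$, so $V\in\StPic(G/G)$ and $w=\alpha^*[V]\in\Img(\alpha^*)$. This proves $\Ker(z)=\Img(\alpha^*)$.

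Finally, part~(e) is transport of structure. By Remark~\ref{rem:BH} applied to $K=H\leq G$ one has $\Res^G_H\circ\iota_G^*=\iota_H^*\circ\alpha^*$, so the isomorphism $\iota_H^*:\StPic(G/H)\isoto T(H)$ carries $\Img\big(\alpha^*:\StPic(G/G)\to\StPic(U)\big)$ onto $\Img\big(\Res^G_H\circ\iota_G^*\big)=\Img\big(T(G)\to T(H)\big)$, the last equality holding because $\iota_G^*$ is surjective. Combining this with $\Ker(z)=\Img(\alpha^*)$ from~(d) yields the announced canonical isomorphism $\Ker\big(\cH^0(\cat U,\StPic)\oto{z}\cH^2(\cat U,\Gm)\big)\isotoo\Img\big(T(G)\to T(H)\big)$.
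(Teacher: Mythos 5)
Your proposal is correct and follows essentially the same route as the paper's own proof: (a) by strict functoriality and $\alpha\circ\pr_1=\alpha\circ\pr_2$; (b) by reading $\zeta$ as an automorphism of $\pr_3^*W$, identifying it with a unit via $\otimes$-invertibility, and using centrality of $\Gm$ for the cocycle and coboundary computations; (d) by modifying $\xi$ by the cobounding unit and invoking the stack property of $\StRep(-)$, with invertibility of $V$ obtained by descending $W^{\otimes -1}$; and (e) by the compatibility $\Res^G_H\circ\iota_G^*=\iota_H^*\circ\alpha^*$. The only differences are cosmetic: you package the statement as a low-degree exact sequence and write out the ``direct computations'' (the pairwise cancellation of the six edges in $d^2\zeta$, the tensor argument for (c)) that the paper leaves to the reader.
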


\begin{proof}
We abbreviate $\cat D=\StRep$ the sipp-stack of Theorem~\ref{thm:stack}\,(c). Part~(a) is clear since $\StPic$ is a presheaf and $\alpha\circ\pr_1=\alpha\circ\pr_2$. Part~(b) contains a slight abuse of notation, since $\pr_{13}^*(\xi)\inv\circ\pr_{12}^*(\xi)\circ\pr_{23}^*(\xi)$ is an automorphism of~$\pr_3^*W$\,:
$$
\xymatrix@C=3em{
\pr_3^*W=\pr_{23}^*\pr_2^*W \ar[r]^-{\pr_{23}^*(\xi)} \ar@<-3em>[d]^-{\zeta\otimes\pr_3^*W}
& \pr_{23}^*\pr_1^*W=\pr_2^*W=\pr_{12}^*\pr_2^*W \ar@<3em>[d]^-{\pr_{12}^*(\xi)}
\\
\pr_3^*W=\pr_{13}^*\pr_2^*W \ar[r]^-{\pr_{13}^*(\xi)}
& \pr_{13}^*\pr_1^*W=\pr_1^*W=\pr_{12}^*\pr_1^*W
}
$$
where $\pr_3:U\pp3\to U$ is the projection on the third factor. But since $W$ is $\otimes$-invertible, so is $\pr_3^*W$. Therefore, like for every $\otimes$-invertible object, any automorphism of $\pr_3^*W$ is given by $\zeta\otimes \pr_3^*W$ for a unique automorphism $\zeta\in \Aut(\unit)=\Gm(U\pp3)$ of the unit. Note also that since $\otimes$ is symmetric the automorphisms of the unit act centrally on every morphism. The verification that $d^{\,2}(\zeta)=0$ is a direct computation. Now, if we replace $\xi$ by another isomorphism $\xi':\pr_2^*W\isoto \pr_1^*W$ then it will differ from $\xi$ by a central unit $u\in \Aut_{\cat D(U\pp2)}(\unit)=\Gm(U\pp2)$. It is then a direct computation to see that $\zeta(\xi)$ and $\zeta(\xi')$ differ by $d^{\,1}(u)$ in $\Gm(U\pp3)$, that is, the class of $\zeta$ is unchanged in $\cH^2(\cat U,\Gm)$. Finally, if $W'$ is another $\otimes$-invertible isomorphic to $W$ in $\cat D(U)$ then choose an isomorphism $v:W'\isoto W$ and use as $\xi'$ for $W'$ the composite $\pr_1^*(v)\inv\xi\pr_2^*(v)$. Then $v$ cancels out in the computation of $\zeta$, that is, $\zeta(\xi',W')=\zeta(\xi,W)$ . This proves~(b). Part~(c) is a standard exercise.

Part~(d) is where we use the stack property. The inclusion $\subseteq$ is easy. Indeed, if $W=\alpha^*V$ then one can take the identity as $\xi$, hence $\zeta$ is trivial. Conversely, if $[W]\in \cH^0(\cat U,\StPic)$ is such that, with the above notation, $[\zeta(W,\xi)]=0$ in $\cH^2(\cat U,\Gm)$ then $\zeta$ is a boundary, \ie $\zeta=d^{\,1}(u)$ for some $u\in \Gm(U\pp2)$. One can then modify the chosen isomorphism $\xi:\pr_2^*W\isoto \pr_1^*W$ into $u\otimes\xi$ (or $u\inv\otimes\xi$, depending on the sign convention in $d^{\,1}$) so that the new $\zeta(W,\xi)=1$. This means that this new $\xi$ satisfies the cocycle condition and therefore, $W$ can be descended to some $V\in \cat D(G/G)$ as wanted. One needs to check that $V$ is $\otimes$-invertible but this is formal\,: Either use that $W\potimes{-1}$ also descends or use that $\cat D(G/G)\to \cat D(U)$ is a faithful $\otimes$-exact functor between closed $\otimes$-triangulated categories, so it detects $\otimes$-invertibility.

Part~(e) follows by compatibility of $\StPic(-)$ and $T(-)$, see~\eqref{eq:comp} for $g=1$.
\end{proof}

Let us give the expected interpretation of~$\cH^0(\cat U,\StPic)$ in classical terms.

\begin{Prop}
\label{prop:H0}%
Let $H\leq G$ be a subgroup of index prime to~$p$ and let $\cat U=\{G/H\onto G/G\}$ be the associated sipp-cover. Then, under the isomorphism $\iota_H^*:\StPic(G/H)\isoto T(H)$, the subgroup $\cH^0(\cat U,\StPic)$ of $\StPic(G/H)$ becomes equal to $\SET{w\in T(H)}{\Res^H_{H[g]}(w)=\gRes^H_{H[g]}(w)\textrm{ in }T(H[g])\textrm{ for all }g\in G}$.
\end{Prop}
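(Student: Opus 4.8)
The plan is to unwind the definition of $\cH^0(\cat U,\StPic)$ and match it, term by term, against the stated condition on $T(H)$. Writing $U=G/H$ and $X=G/G$, the single point $X$ gives $U\pp2=U\times_X U=(G/H)^2$ with diagonal $G$-action, so by Definition~\ref{def:Cech} the group $\cH^0(\cat U,\StPic)$ consists of those $w\in\StPic(G/H)$ with $\pr_1^*(w)=\pr_2^*(w)$ in $\StPic\big((G/H)^2\big)$. First I would pick a set $S\subset G$ of representatives of $\HGH$ and recall, as in the proof of Theorem~\ref{thm:tame}, that the Mackey formula~\eqref{eq:Mackey} identifies $\coprod_{t\in S}G/H[t]\isoto(G/H)^2$ through the maps $\gamma_t$ of Notation~\ref{not:bcd}; consequently $\prod_{t\in S}\gamma_t^*$ is an equivalence of categories. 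Since each $\gamma_t^*$ is a $\otimes$-functor and the splitting into orbits respects $\otimes$, this equivalence passes to Picard groups, giving an isomorphism $\prod_{t\in S}\gamma_t^*:\StPic((G/H)^2)\isoto\prod_{t\in S}\StPic(G/H[t])$ (the Picard group of a finite product of $\otimes$-categories being the product of the Picard groups); in particular the family $(\gamma_t^*)_{t\in S}$ is jointly injective on $\StPic((G/H)^2)$.

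The next step is the key reduction: I would show that, for $w\in\StPic(G/H)$, one has $\pr_1^*(w)=\pr_2^*(w)$ if and only if $\beta_g^*(w)=\beta_1^*(w)$ in $\StPic(G/H[g])$ for every $g\in G$. By Lemma~\ref{lem:12}, $\pr_1\circ\gamma_g=\beta_g$ and $\pr_2\circ\gamma_g=\beta_1$, so $\gamma_g^*\pr_1^*=\beta_g^*$ and $\gamma_g^*\pr_2^*=\beta_1^*$. Applying $\gamma_g^*$ to $\pr_1^*(w)=\pr_2^*(w)$ gives the forward implication for all $g\in G$ at once; for the converse I would specialize to $g=t\in S$ and invoke the joint injectivity just established. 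The point worth stressing is that, thanks to this injectivity, testing on all of $G$ and testing only on $S$ cut out the very same subgroup, so the double-coset bookkeeping of Claim~A in the proof of Theorem~\ref{thm:tame} is not needed here.

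Finally I would translate the condition $\beta_g^*(w)=\beta_1^*(w)$ through the equivalences $\iota^*$. Setting $w':=\iota_H^*(w)\in T(H)$ and $K:=H[g]=H^g\cap H$, the inclusions $\con{g}K=H\cap\con{g}H\leq H$ and $\con{1}K=K\leq H$ let me apply diagram~\eqref{eq:comp} to both $\beta_g$ (parameter $g$) and $\beta_1$ (parameter $1$), yielding $\iota_K^*\beta_g^*(w)=\gRes^H_{H[g]}(w')$ and $\iota_K^*\beta_1^*(w)=\Res^H_{H[g]}(w')$ in $T(H[g])$. As $\iota_K^*$ is an isomorphism, $\beta_g^*(w)=\beta_1^*(w)$ is equivalent to $\gRes^H_{H[g]}(w')=\Res^H_{H[g]}(w')$, and combining with the previous step shows that $\iota_H^*$ carries $\cH^0(\cat U,\StPic)$ exactly onto the stated subgroup of $T(H)$. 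The main obstacle I anticipate is the categorical bookkeeping in the first paragraph, namely verifying carefully that the Mackey equivalence is genuinely $\otimes$-compatible so that it descends to an isomorphism of Picard groups; everything afterwards is a direct diagram translation.
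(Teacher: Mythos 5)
Your proposal is correct and follows essentially the same route as the paper's proof: the Mackey identification $\sqcup_{t\in S}\,G/H[t]\isoto (G/H)\pp{2}$ from~\eqref{eq:Mackey} combined with the compatibility diagram~\eqref{eq:comp}, which translates $\beta_g^*$ and $\beta_1^*$ into $\gRes^H_{H[g]}$ and $\Res^H_{H[g]}$ on $T(H[g])$. The only (harmless) organizational difference is where the passage between ``all $g\in G$'' and ``$g\in S$'' happens: the paper reduces the condition on the $T(H)$ side using $W\simeq \con{h\,}W$ for $h\in H$, whereas you derive the all-$g$ statement directly by applying $\gamma_g^*$ to $\pr_1^*(w)=\pr_2^*(w)$ and recover the converse from joint injectivity of $(\gamma_t^*)_{t\in S}$ on Picard groups, in the spirit of Claim~A in the proof of Theorem~\ref{thm:tame}.
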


\begin{proof}
Choose $S\subset G$ a set of representatives $S\isoto\HGH$. Since for every $\kk H$-module $W$, we have $W\simeq \con{h\,}W$ for all $h\in H$, the above condition ``\,$\Res^H_{H[g]}(w)=\gRes^H_{H[g]}(w)$ for all $g\in G$\," is equivalent to the same condition for all $g\in S$ only. The result then follows from compatibility of $\StPic$ and $T$, as in~\eqref{eq:comp}, together with the isomorphism $\sqcup(\beta_g\times\beta_1)\,:\ \sqcup_{g\in S}\,G/H[g]\isoto (G/H)\pp{2}$ from~\eqref{eq:Mackey}.
\end{proof}

\begin{Exa}
Following up on Remark~\ref{rem:normal}, when $H\unlhd G$ is normal then $G/H$ acts on $T(H)$ and Proposition~\ref{prop:H0} gives $\cH^0(\cat U,\StPic)\cong T(H)^{G/H}$.
\end{Exa}

\begin{Rem}
The condition ``$w$ belongs to $\cH^0(\cat U,\Gm)$" is the \emph{naive} condition for extension to~$G$. Descent with respect to the sipp-cover $\cat U$ gives the critical obstruction $z(w)$ in $\cH^2(\cat U,\Gm)$, whose vanishing really guarantees extension to~$G$. If $H=P$ is the Sylow $p$-subgroup of~$G$, then all the subgroups $P[g]\leq P$ are $p$-groups and all $T(P[g])$ are known by the classification~\cite{CarlsonThevenaz04,CarlsonThevenaz05}. So Proposition~\ref{prop:H0} allows a complete description of the subgroup $\cH^0(\cat U,\StPic)$ of~$T(P)$.
\end{Rem}

\begin{Rem}
The \Cech\ cohomology groups $\cH^1(\cat U,\Gm)$ and $\cH^2(\cat U,\Gm)$ which appear in Theorems~\ref{thm:TGP} and~\ref{thm:H2} can be made more explicit. Indeed, for $i=1$, one exactly recovers the description of the kernel $T(G,H)$ is terms of so-called ``weak $H$-homomorphisms" $G\to \kk^\times$ as given in~\cite{Balmer12app}. The case of $\cH^2(\cat U,\Gm)$ is more technical but the complex $\cC^i(\cat U,\Gm)$ around $i=2$ only involves finitely many copies of $\kk^\times$, indexed by components of $U\pp{2}$, $U\pp{3}$ and $U\pp{4}$. This explicit description of $\cH^2(\cat U,\Gm)$ is left to the interested reader. In the author's opinion, it becomes preferable to use \Cech\ cohomology \textsl{per se} and not to obsess oneself with a direct computation from the definition. Instead, for specific groups $H\leq G$, one might try to use cohomological methods to compute $\cH^\sbull(\cat U,\Gm)$ or its torsion. Such developments are interesting challenges for future research.
\end{Rem}

%------------------------------------------------------------------------------
\noindent\textbf{Acknowledgments}\,: I thank Serge Bouc, Jon Carlson, Ivo Dell'Ambrogio, Rapha\"el Rouquier, Jacques Th\'evenaz and Peter Webb for several helpful discussions.
%------------------------------------------------------------------------------
\addtocontents{toc}{\protect\mbox{}\protect}
\bibliographystyle{alpha}%

%------------------------------------------------------------------------------

%------------------------------------------------------------------------------
\end{document}